\title[Constructing Weyl group multiple Dirichlet series]{
Constructing Weyl group multiple Dirichlet series}
\author{Gautam Chinta and Paul E.~Gunnells}
\address{Department of Mathematics,
The City College of CUNY,
New York, NY 10031, USA
}
\email{chinta@sci.ccny.cuny.edu}
\address{Department of Mathematics and Statistics,
University of Massachusetts,
Amherst, MA 01003, USA
}
\email{gunnells@math.umass.edu}
\date{February 24, 2009}
\thanks{Both authors thank the NSF for support.}
\DeclareMathOperator{\ord}{ord}
\DeclareMathOperator{\sgn}{sgn}
\DeclareMathOperator{\Supp}{Supp}
\DeclareMathOperator{\length}{length}
\newcommand{\fin}{\text{fin}}
\newcommand{\re}{\mbox{Re}}
\begin{document}

\begin{abstract}
Let $\Phi$ be a reduced root system of rank $r$.  A {\it Weyl group
multiple Dirichlet series} for $\Phi$ is a Dirichlet series in $r$
complex variables $s_1,\dots,s_r$, initially converging for $\re(s_i)$
sufficiently large, that has meromorphic continuation to ${\mathbb
C}^r$ and satisfies functional equations under the transformations of
${\mathbb C}^r$ corresponding to the Weyl group of $\Phi$.  A
heuristic definition of such series was given in \cite{wmd1}, and they
have been investigated in certain special cases in \cite{wmd1, wmd2,
wmd3, wmd4, wmd5, qmds, cfg, nmdsA2, chbq}.  In this paper we
generalize results in \cite{nmdsA2} to construct Weyl group multiple
Dirichlet series by a uniform method, and show in all cases that they
have the expected properties.
\end{abstract}

\maketitle

% Definitions
\newcommand{\Q}{\ensuremath{{\mathbb Q}}}
\newcommand{\F}{\ensuremath{{\mathbb F}}}
\newcommand{\R}{\ensuremath{{\mathbb R}}}
\newcommand{\A}{\ensuremath{{\mathbb A}}}
\newcommand{\C}{\ensuremath{{\mathbb C}}}
\newcommand{\Z}{\ensuremath{{\mathbb Z}}}
\newcommand{\x}{\ensuremath{{\bf { x}}}}
\newcommand{\y}{\ensuremath{{\bf { y}}}}
\newcommand{\s}{\ensuremath{{\bf { s}}}}
\newcommand{\bd}{\overline{d}}
\newcommand{\p}{\ensuremath{{\varpi}}}
\newcommand{\m}{\ensuremath{{\mathfrak { m}}}}
\newcommand{\g}{\ensuremath{{\mathfrak { g}}}}
\newcommand{\bfm}{\ensuremath{{{\bf m}}}}
\newcommand{\bfa}{\ensuremath{{{\bf a}}}}
\newcommand{\fb}{\ensuremath{{\mathfrak { b}}}}
\newcommand{\f}{\ensuremath{{\mathfrak { f}}}}

\newcommand{\fS}{\ensuremath{{\mathfrak { S}}}}
\newcommand{\PP}{\ensuremath{{\mathcal P}}}
\newcommand{\QQ}{\ensuremath{{\mathcal Q}}}
\newcommand{\ZZ}{\ensuremath{{\mathcal Z}}}
\newcommand{\TT}{\ensuremath{{\mathcal T}}}
\newcommand{\D}{\ensuremath{{\mathcal D}}}
\newcommand{\M}{\ensuremath{{\mathcal M}}}
\newcommand{\MM}{\ensuremath{{\mathfrak M}}}

\newcommand{\E}{\ensuremath{{\mathcal E}}}
\newcommand{\psia}{\ensuremath{{\it \Psi}}}
\newcommand{\innprod}[2]{\langle #1, #2 \rangle}

\newcommand{\im}{\mbox{Im}}
\newcommand{\res}{\mathop{\mbox{Res}}}
\newcommand{\half}{{\smfrac{1}{2}}}

\newcommand{\I}{{\mathcal I}}
\newcommand{\J}{{\mathcal J}}
\newcommand{\N}{{\mathcal N}}
\newcommand{\pr}{{\prime}}
\newcommand{\norm}[1]{\left\Vert #1\right\Vert}
\newcommand{\nrm}[1]{\left| #1\right|}
\newcommand{\abs}[1]{\left|#1\right|}

\newcommand{\prs}[2]{\Bigl(\frac{#1}{#2}\Bigr)}
\newcommand{\smfrac}[2]{{\textstyle \frac{#1}{#2}}} %a small fraction
\newcommand{\nn}{\nonumber}

\newcommand{\OO}{{\mathcal O}}
\newcommand{\OOmon}{{\OO_{\text{mon}}}}
\newcommand{\hpl}{h^{(\p,l)}}
\newcommand{\fpl}{f^{(\p,l)}}

\newcommand{\polyring}{A}
\newcommand{\local}{\widetilde{A}}
\newcommand{\wt}{\omega }

\renewcommand{\mod}{\bmod}

\newtheorem{theorem}{Theorem}
\newtheorem{lemma}[theorem]{Lemma}
\newtheorem{prp}[theorem]{Proposition}
\newtheorem{corollary}[theorem]{Corollary}
\newtheorem{conjecture}[theorem]{Conjecture}
\newtheorem{fact}[theorem]{Fact}
\newtheorem{claim}[theorem]{Claim}
\theoremstyle{definition}

\newtheorem{example}[theorem]{Example}
\newtheorem{remark}[theorem]{Remark}
\newtheorem{defin}[theorem]{Definition}

\numberwithin{theorem}{section}
\numberwithin{equation}{section}

% Begin here
\section{Introduction}

This paper describes a technique to construct Weyl group multiple
Dirichlet series.  Such series were first introduced in \cite{wmd1},
which also described a heuristic means to define, analytically
continue, and prove functional equations for a family of Dirichlet
series in several complex variables.  Several subsequent papers have
dealt with the problem of making the heuristic definitions precise and
completing the proofs of analytic continuation and functional equation
of these Weyl group multiple Dirichlet series along the lines
suggested in \cite{wmd1}.  Before listing some of the partial results
obtained in these papers, we say a bit more about the type of multiple
Dirichlet series studied.

Let $F$ be an algebraic number field containing the $2n^{th}$ roots of
unity.  Fix a finite set of places $S$ containing all the archimedean
places and those that are ramified over $\Q.$ Take $S$ large enough
that $\OO_S$, the ring of $S$-integers of $F$, has class number one.

Let $\Phi$ be a reduced root system of rank $r.$ Let ${\bf m}=(m_1,
\ldots, m_r)$ be an $r$-tuple of integers in $\OO_S$ and $\s=(s_1,
\ldots , s_n)$ an $r$-tuple of complex variables.  In \eqref{eq:18} we
define a certain finite-dimensional vector space $\mathcal {M}
(\Omega, \Phi)$ of complex-valued functions on $(F_{S}^{\times})^{r}$,
and we choose $\psia \in \mathcal {M} (\Omega, \Phi)$.  In Section
\ref{section:new} we define coefficients $H (\mathbf{c}; \bfm)$, where
$\mathbf{c}$ and $\bfm$ are $r$-tuples of nonzero integers in
$\OO_{S}$.  To this data we associate a multiple Dirichlet series in
$r$ complex variables
\begin{equation}
  \label{eq:2}
  Z(\s;{\bf m},\psia)=Z(\s; {\bf m}, \psia;
\Phi, n)=\sum_{{\bf c}} \frac{H({\bf c};{\bf m})
    \psia({\bf c})} {\prod |c_i|^{s_i}} ,
\end{equation}
where $\mathbf{c} = (c_{1},\dotsc ,c_{r})$ and each $c_{i}$ ranges
over $(\OO_S\smallsetminus \{0 \})/\OO_S^{\times}$.  (Although the
coefficients $H$ and $\psia$ are defined for nonzero $c_{i}\in
\OO_{S}$, their product $H\psia$ is unchanged when any $c_{i}$ is
multiplied by a unit, as the arguments in \cite{wmd2} show.)  The sum
\eqref{eq:2} is absolutely convergent when $\re (s_i)> 3/2$.  We call
\eqref{eq:2} a \emph{degree $n$ Weyl group multiple Dirichlet series
of type $\Phi$ with twisting parameter ${\bf m}.$}

These series were introduced in the papers \cite{wmd1, wmd3, wmd4}.
Actually, except for the cases listed below, in these papers the
coefficients
$$H(c_1, \ldots, c_r;m_1,\ldots, m_r)$$ were defined only when the
product $c_1\cdots c_r\cdot m_1\cdots m_r$ was squarefree---we call
these the {\em powerfree} coefficients---and it was suggested that it
should be possible to define the $H$-coefficients for all $r$-tuples
$(c_1,\ldots, c_r)$ in such a way that the resulting multiple
Dirichlet series has an analytic continuation to $\C^r$ and satisfies
a group of functional equations isomorphic to the Weyl group $W$ of
$\Phi.$  These powerfree coefficients can be expressed in terms of
$n$th order Gauss sums and the root data.

The full definitions and expected properties of $Z(\s; {\bf m}, \psia;
\Phi, n)$ have been given in the following cases:
\begin{enumerate} 
\item \cite{wmd1} $\Phi=A_2$, $n$ arbitrary, ${\bf
m}=(1,1)$. 
\item \cite{wmd2, wmd4} $\Phi$ arbitrary and the degree $n$
  sufficiently large with respect to $\Phi$ and the twisting parameter
  {\bf m}.  In \cite{wmd2, wmd4}, this is called the {\em
    stable} case.
\item \cite{wmd3} $\Phi=A_2$, $n$ arbitrary, ${\bf m}$ arbitrary.
\item \cite{qmds, cfg} $\Phi$ simply-laced, $n=2$, i.e.~the quadratic
case.  (The methods of\cite{qmds}, which dealt only with the untwisted case,
were extended to arbitrary ${\bf m}$ in \cite{cfg}.)
\item \cite{wmd5, wmdbook}  $\Phi=A_r$, $n$ arbitrary, ${\bf
m}$ arbitrary.  In these preprints, Brubaker, Bump and Friedberg deal
with the type $A_r$ multiple Dirichlet series in two different
ways. In \cite{wmd5} they show that the multiple Dirichlet series
coincide with Whittaker coefficients of a metaplectic Eisenstein
series, while in \cite{wmdbook}, they use a combinatorial approach
based on crystal graphs.  At present, both methods work only for the
type $A$ root systems.
\end{enumerate}

In this paper, we give by a uniform method, a complete definition of
the Weyl group multiple Dirichlet series and proofs of the expected
properties in all cases.  That is, we have the following theorem
(cf.~Theorem \ref{thm:maintheorem}):

\begin{theorem}
  Let $n\geq 1$ and let $F, S, \OO_S$ be as above.  Let $\Phi$ be a
  reduced root system of rank $r$ and let ${\bf m}$ be an $r$-tuple of
  nonzero integers in $\OO_S.$ Then, the multiple Dirichlet series
  $Z(\s; {\bf m}, \psia;\Phi, n) $ defined in Section \ref{sec:wmds}
  has an analytic continuation to $\s\in \C^r$ and satisfies a group of
  functional equations isomorphic to $W$, the Weyl group of $\Phi.$
\end{theorem}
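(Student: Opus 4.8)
The plan is to reduce the global analytic problem to a local, prime-by-prime computation, extract from it a functional equation for each simple reflection, and then assemble these into the full Weyl group symmetry, obtaining analytic continuation by a convexity argument. First I would use the twisted multiplicativity of the coefficients $H(\mathbf{c};\bfm)$. Since $H$ is multiplicative in $\mathbf{c}$ up to $n$th power residue symbols, the series $Z(\s;\bfm,\psia;\Phi,n)$ decomposes, after separating out the twisting function $\psia$, into a product indexed by the primes $\p$ of $\OO_S$. This reduces everything to the behaviour of the local generating function
\[
N_{\p}(\mathbf{x}) = \sum_{k_1,\dotsc,k_r\geq 0} H(\p^{k_1},\dotsc,\p^{k_r};\p^{l_1},\dotsc,\p^{l_r})\, x_1^{k_1}\cdots x_r^{k_r},
\]
where $x_i$ encodes $|\p|^{-s_i}$ and $(l_1,\dotsc,l_r)$ records the $\p$-adic valuations of $\bfm$. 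The structural fact I would take from the construction in Section \ref{sec:wmds} is that $N_{\p}$ is a rational function of $\mathbf{x}$, with numerator a polynomial and denominator a product of one geometric-series factor for each positive root of $\Phi$.

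Next I would invoke the action of $W$ on the field of rational functions $\C(x_1,\dotsc,x_r)$ built into the definition of $H$. The coefficients are designed so that $N_{\p}$ transforms in a controlled way under the operator attached to each simple reflection $\sigma_i$: one obtains a one-variable functional equation relating the values of $N_{\p}$ before and after reflecting in $x_i$, the remaining variables being shifted and a ratio of Gauss-sum factors appearing. Reassembling the Euler product and absorbing the denominators into a finite collection of Dedekind zeta or Dirichlet $L$-factors (together with the archimedean data carried by $\psia$) converts each local identity into a functional equation for the completed series under $\sigma_i$.

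The crux, and the step I expect to be the main obstacle, is to verify that these simple-reflection functional equations are mutually compatible, i.e.\ that the operators $\sigma_i$ satisfy the braid and Coxeter relations of $\Phi$ and hence generate a genuine action of $W$ rather than $r$ unrelated involutions. This is where the Gauss-sum cocycle and the precise root data are essential; the relations can be checked by reducing to the rank-two parabolic subsystems, so that only the $A_1\times A_1$, $A_2$, $B_2$, and $G_2$ cases need to be treated. Granting well-definedness of the $W$-action, the individual functional equations compose to give the desired group isomorphic to $W$.

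Finally, for the analytic continuation I would argue that $Z$, multiplied by the finitely many zeta and $L$-factors produced above, is holomorphic on the union of the $W$-translates of the region of absolute convergence $\re(s_i)>3/2$. This union is a tube domain whose convex hull is all of $\C^r$, so Bochner's tube theorem yields meromorphic continuation to $\C^r$, with poles confined to the divisors of the auxiliary zeta and $L$-factors. Combining this continuation with the functional equations established above gives the full statement.
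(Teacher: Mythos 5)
There is a genuine gap, and it occurs at the very first step: twisted multiplicativity does \emph{not} give an Euler product. The relation \eqref{eq:3} says $H(c_1c_1',\ldots,c_rc_r';\bfm)=\xi(\mathbf{c},\mathbf{c}')H(\mathbf{c};\bfm)H(\mathbf{c}';\bfm)$, where $\xi(\mathbf{c},\mathbf{c}')$ is the product of $n$th power residue symbols in \eqref{eq:4}; since $\xi$ is not identically $1$, the series $Z(\s;\bfm,\psia)$ does not factor as a product over primes (indeed the paper remarks explicitly, in Section \ref{sec:examples}, that a Weyl group multiple Dirichlet series does not generally have an Euler product). So your reduction of $Z$ to the local generating functions $N_\p$, followed by ``reassembling the Euler product,'' cannot be carried out as stated. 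More fundamentally, even granting some local-to-global bookkeeping, your argument contains no source of analytic continuation or of a \emph{global} functional equation: purely algebraic symmetries of the rational functions $N_\p$ under the $W$-action cannot by themselves continue a Dirichlet series beyond its region of convergence. Some genuinely automorphic input is required, and this is the ingredient your proposal omits entirely.

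The paper supplies that input through Kubota's Gauss-sum Dirichlet series. The actual proof fixes all indices except the $i$th and studies the one-variable series $\E(s_i,\mathbf{a};\bfm,\psia,i)$ of \eqref{eq:8}; the computation culminating in \eqref{eq:15} expresses $\E$ as a finite linear combination of products of Kubota series $\D$ (with prime-power twists) and the local factors $f^{(\p;\mathbf{k})}$ of \eqref{eq:25}. The functional equation $s_i\mapsto 2-s_i$ of Theorem \ref{thm:FEa} then follows by combining the functional equation of Kubota's series (Proposition \ref{prp:KubotaFE}, resp.\ \ref{prp:KubotaFE2}) --- a deep analytic fact coming from Eisenstein series on the $n$-fold metaplectic cover of $GL_2$ --- with the purely algebraic functional equation of $f^{(\p;\mathbf{k})}$ (Theorem \ref{thm:f}). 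Your proposal does correctly identify two genuine components of the paper's argument: the verification of the Coxeter relations for the local $W$-action by reduction to rank-two computations (Theorem \ref{thm:waction}), and the final Bochner tube-domain convexity argument for continuation. But between these two you need the Kubota series as the bridge from the local functional equations to the global ones; without it the proof does not go through.
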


We now describe our method of proof.  As stated above, the heuristic
definition of $Z(\s;\bfm , \psia )$ includes the definition of the
powerfree coefficients $H(c_1, \ldots, c_r;m_1, \ldots
m_r)$. Moreover, the coefficients satisfy a twisted multiplicativity
\eqref{eq:3}--\eqref{eq:5} that reduces their specification to that of
the prime power coefficients $H(\p^{k_1}, \ldots, \p^{k_r};\p^{l_1},
\ldots, \p^{l_r})$, for $\p$ prime in $\OO_S.$ We are naturally led to
consider the generating series
\begin{equation}
  \label{eq:16}
N=  N(x_1, \ldots, x_r)=\sum_{k_1, \ldots, k_r\geq 0} H(\p^{k_1},
  \ldots, \p^{k_r})x_1^{k_1}\cdots x_r^{k_r}, 
\end{equation}
where we have suppressed the twisting parameter from the notation.
This generating series, which turns out to be a polynomial in the
indeterminates $x_1,\ldots, x_r$, must be defined in such a way that
the resulting global object $Z(\s;\bfm , \psia )$ can be shown to
satisfy the right functional equations.  

The main idea of this paper is that there is a certain action of $W$
on the field of rational functions $\C (x_{1},\dotsc
,x_{r})$.  Using this action we can construct an invariant function
$h$ with an easily understood denominator $D \in \C [x_{1},\dotsc
,x_{r}]$.  We then take $N$ to be the polynomial $hD$, and prove that
the multiple Dirichlet series constructed using \eqref{eq:16}
satisfies the requisite functional equations under $W$. What makes
things difficult is that the $W$-action is a little
complicated.  Moreover, it is not obvious that this action has
anything to do with the functional equations of a global multiple
Dirichlet series.

There are at least three ways to motivate the $W$-action
defined in Section \ref{sec:Waction}.  First, if the functional
equations of $Z(\s;\bfm , \psia)$ are assumed, and one works backwards
through the proof of Theorem \ref{thm:FEa}, one finds that the
$\p$-parts of the multiple Dirichlet series must have precisely the
$W$-invariance defined in Section \ref{sec:Waction}.  For $n=2$ and
$\Phi=A_2$, this is the approach taken in Section 5 of
\cite{cfhSurvey}.  A second way to motivate the action of $W$ is the
remarkable fact that an untwisted Weyl group multiple Dirichlet series
over the rational function field coincides (after a simple change of
variables) with its own $\p$-part.  This was noted in \cite{cfhSurvey}
for $\Phi=A_2$ and $n=2$ and in \cite{ffwmd} for $\Phi=A_2$ and
general $n$.  Consequently, in this context, the $\p$-parts satisfy
essentially the same functional equations as the multiple Dirichlet
series itself.  These functional equations follow from the functional
equations of Kubota's Dirichlet series.  Over the rational function
field, Kubota's theory has been very explicitly worked out by
Hoffstein \cite{jhoff} and Patterson \cite{sjpNote}.  This is the
point of view which motivated our work in \cite{nmdsA2}.  Yet another
another approach to motivating the local functional equations is via
the {\em Eisenstein conjecture} described in \cite{wmd2}.  This
conjecture essentially states that a Weyl group multiple Dirichlet
series should be a Whittaker coefficient of an Eisenstein series on an
$n$-fold metaplectic cover of $G$, the simply-connected algebraic
group over $F$ whose root system is the dual of $\Phi.$ If this is
true, then the $\p$-part of a multiple Dirichlet series should be
related to a local Whittaker function of a $\p$-adic metaplectic
group.  The functional equations satisfied by a $\p$-adic metaplectic
Whittaker function have been described by Kazhdan--Patterson
\cite{kp}.  At least for root systems of type $A$, the group action we
define in Section \ref{sec:Waction} coincides with the functional
equations in Lemma 1.3.3 of \cite{kp}. From this perspective, our
averaging formula (\ref{eqn:defofh}) can potentially be seen as a
Casselman--Shalika formula for $\p$-adic Whittaker coefficients on
metaplectic groups.  This connection is being further developed in
current work of Chinta and Offen.

We conclude this introduction with a description of the organization
of the paper.  In Section \ref{sec:preliminaries} we review basic
facts about Gauss sums and reciprocity that we will need.  Section
\ref{sec:Waction} is devoted to the definition of the action of the
Weyl group on the field of rational functions $\C(x_1, \ldots, x_r)$.
We further construct rational functions invariant under the group
action.  These invariant functions play a key role in the definition
of the Weyl group multiple Dirichlet series.  The results of this
section depend only on the combinatorics of the root system and its
Weyl group.

Section \ref{section:new} describes how to define the coefficients
$H({\bf c};{\bf m})$ of the multiple Dirichlet series.  In short, the
prime power coefficients 
\[
H(\p^{\beta_1},
\ldots, \p^{\beta_r}; \p^{l_1}, \ldots ,\p^{l_r}) 
\]
can be read off
from the Taylor coefficients of the invariant functions constructed in
Section \ref{sec:Waction}.  The general coefficient $H({\bf c};{\bf
  m})$ is then defined in terms of the prime power coefficients via the
``twisted multiplicity'' given in (\ref{eq:3}).  In addition we show
that certain combinations $f^{\p;\bf k}$ of the local factors satisfy
the simple functional equation given in Theorem \ref{thm:f}.

Section \ref{sec:kubota} begins with a review of Kubota's Gauss sum
Dirichlet series, as presented in \cite{bb}.  These Dirichlet series,
constructed with Gauss sum coefficients, have a meromorphic
continuation to the complex plane and satisfy a functional equation as
$s\mapsto 2-s,$ see Proposition \ref{prp:KubotaFE}.  We further
construct a single-variable Dirichlet series $\E$ (defined in
(\ref{eq:8}) using the coefficients $H({\bf c};{\bf m})$, and show that
it can be written as a linear combination of products of Kubota's
series and the local factors  $f^{\p;\bf k}$ defined in Section
\ref{section:new}.  Consequently, $\E$ satisfies the functional
equation described in Theorem \ref{thm:FEa}.

In Section \ref{sec:wmds} we define the multiple Dirichlet series $Z(\s; {\bf
  m}, \Psi)$ and prove our main result, Theorem
\ref{thm:maintheorem}, which gives the functional equation and
meromorphic continuation of $Z.$  The single-variable Dirichlet series
$\E$ act as the building blocks of $Z$, and hence the functional
equations of the multivariable object follow from those of $\E$
established in the previous section.  Then the meromorphic
continuation of the multiple Dirichlet series $Z$ follows from 
 a simple convexity argument, as has been extensively used in the
 recent papers \cite{qmds,wmd3,wmd1,wmd2,nmdsA2}.

In Section \ref{sec:examples} we suggest how other examples of
multiple Dirichlet series that have previously appeared in the
literature fall into the framework presented here, and make some
further remarks.  This final section
is not meant to be absolutely precise or definitive, but rather
indicative of further prospects for research in the field.

As the outline given above suggests, we have tried to minimize the
interdependence of the sections on one another.  In particular the
only result from Section \ref{section:new} that is used in the later
sections is Theorem \ref{thm:f}, which appears at the very end of the
proof of Theorem \ref{thm:FEa}.  Theorem \ref{thm:FEa} is then the
only theorem from the earlier sections used in the proof of our main
result Theorem \ref{thm:maintheorem}.  The structure of this argument
mimics the pattern established in our earlier papers
\cite{qmds,chbq,nmdsA2}, where functional equations of local factors
induce functional equations in certain univariate Dirichlet series,
which in turn induce functional equations in the multivariate Weyl
group multiple Dirichlet series that are our main objects of
interest.  This approach has by now become streamlined and dense, as
necessitated by the increasing notational complexities.  The reader
who is new to these techniques may benefit from the presentation in
\cite{nmdsA2}, where the case of the root system $A_2$ is worked out
in detail.

{\bf Acknowledgments.}  The authors are deeply grateful to Ben 
Brubaker, Dan Bump, Sol Friedberg, Jeff Hoffstein, Joel Mohler,
and Samuel Patterson for their advice and extensive correspondence
throughout the preparation of this work.  The authors also thank the
referee for several comments that greatly improved our paper.

\section{Preliminaries}\label{sec:preliminaries}
In this section we recall basic facts about Hilbert symbols and Gauss
sums we will need in the sequel.  Our exposition follows
\cite[\S\S2.3--2.7]{wmd4} essentially verbatim.

Recall that $F$ is a number field containing the $2n$-th roots of
unity, and that $S$ is a finite set of places containing all the
archimedean places and those ramified over $\Q$.  Recall also that $S$
is assumed to be large enough that $\OO_S$, the ring of $S$-integers
of $F$, is a principal ideal domain.  Let $F_{S} = \prod_{v\in S}
F_{v}$.  Similarly let $S_{\fin}\subset S$ be the subset of finite
places, and let $F_{\fin} = \prod_{v\in S_{\fin }} F_{v}$.  We embed
$\OO_{S}\rightarrow F_{S}$ diagonally.

Each place $v$ determines a local Hilbert symbol
$(\phantom{a},\phantom{a})_{v}\colon F_{v}^{\times}\times
F_{v}^{\times}\rightarrow \mu_{n}$, where $\mu_{n}$ is the group of
$n$-th roots of unity \cite{neukirch}.  We assume fixed an embedding
$\epsilon$ from $\mu_n$ to $\C^\times$ and identify $\mu_n$ with its
image.   The Hilbert symbol leads to a pairing
\[
(\phantom{a},\phantom{a})_{S}\colon F_{S}^{\times}\times
F_{S}^{\times}\rightarrow \mu_{n},
\]
defined by $(a,b)_{S} = \prod_{v\in S} (a,b)_{v}$. 

A subgroup $\Omega\subset F_{S}^{\times }$ is called \emph{isotropic}
if $(\phantom{a},\phantom{a})_{S}|_{\Omega \times \Omega}$ is
trivial.  Now let $\Omega$ be the subgroup
$\OO_{S}^{\times}F_{S}^{\times n}$, which is maximal isotropic.
Given a positive integer $t$, define the complex vector space $\M_{t}
(\Omega)$ by
\[
\M_{t} (\Omega) = \Bigl\{\Psi \colon F_{\fin}^{\times}\rightarrow \C
\Bigm| \Psi (\varepsilon c) = (c,\varepsilon)_{S}^{-t} \Psi
(c)\text{\ for all $\varepsilon \in \Omega $}\Bigr\}.
\]
We abbreviate $\M_{1} (\Omega)$ by $\M (\Omega)$. 
%{\bf I don't think we ever use just $\M_1.$}  
Note that if
$\varepsilon$ is sufficiently close to the identity in
$F_{\fin}^{\times}$, then $\varepsilon$ is an $n$-th power at every
finite place $S_{\fin}$.  Hence functions in $\M (\Omega )$ are locally
constant. One can show that the dimension of $\M (\Omega) =
[F_{S}:\Omega]$, which is finite.

Let $a\in \OO_{S}$ and let $\fb \subset \OO_{S}$ be an ideal.  Let
$(\smfrac{a}{\fb})$ be the $n$-th power residue symbol defined in
\cite{wmd2}.  In general this symbol depends on the set of places $S$,
but we omit this from the notation.  Let $t$ be a positive integer and
let $a,c\in \OO_{S}$ with $c\not =0$.  Choose a nontrivial additive
character $\psi$ of $F_{S}$ such that $\psi (x\OO_{S})=1$ if and only
if $x\in \OO_{S}$ \cite[Lemma 1]{bb}.  Then we define a Gauss sum $g
(a,c;\epsilon^t)$ by
\begin{equation}
  \label{eqn:gausssum}
g (a,c;\epsilon^t) = \sum_{d\mod c} \epsilon^t\left(
\prs{d}{c\OO_{S}}\right) \psi \prs{ad}{c}.  
\end{equation}

%We abbreviate $g (a,c;\epsilon)$ by $g (a,c)$. 
%{\bf I don't think we need  this notation.}

We list some properties of the residue symbol and Gauss sums.  For
proofs,  see \cite[Chapter 6/Theorem 8.3]{neukirch}
for reciprocity, and Ireland--Rosen \cite{ir} for the properties of
Gauss sums.  

\begin{prp} Let $a,b,m \in \OO_S$ with $a,b$ relatively prime. Then
  \begin{enumerate}
\item {} [Reciprocity] 
  $\prs{a}{b}=(b,a)_S\prs {b}{a}.$
\item $g(m,ab;\epsilon^t)=g(m,a;\epsilon^t)g(m,b;\epsilon^t)\epsilon^t\left(\prs ab \prs ba\right)$ 
\item $g(am,b;\epsilon^t)=\epsilon^{-t}\left(\prs ab\right) 
g(m,b;\epsilon^t) $
\item If $\p\in \OO_S$ is prime and $t$ does not vanish mod $n$, then 
\[
g (1,\p;
\epsilon^{t})g (1,\p; \epsilon^{-t}) = |\p|.
\]
\end{enumerate}
\end{prp}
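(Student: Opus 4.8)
The plan is to prove the four stated properties of the $n$-th power residue symbol and the Gauss sums $g(a,c;\epsilon^t)$ essentially by reducing everything to the definition \eqref{eqn:gausssum}, the standard reciprocity law cited from \cite{neukirch}, and the elementary multiplicative structure of $\OO_S$ as a principal ideal domain. Since part (1) is simply the reciprocity law quoted directly from the cited reference, I would treat it as given; the real content lies in deriving the Gauss sum identities (2)--(4) from it together with the additive character $\psi$ and the Chinese Remainder Theorem.

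For part (2), I would first use that $a$ and $b$ are coprime, so that $d \bmod ab$ decomposes via CRT as $d \equiv d_1 \bmod a$, $d \equiv d_2 \bmod b$. Writing $d = d_1 b \bar b + d_2 a \bar a$ where $\bar a a \equiv 1 \bmod b$ and $\bar b b \equiv 1 \bmod a$, I would substitute into \eqref{eqn:gausssum}. The additive character factors as $\psi(md/(ab)) = \psi(m d_1/a \cdot \text{(unit)})\,\psi(m d_2/b \cdot \text{(unit)})$, and the residue symbol $\epsilon^t\bigl(\prs{d}{ab\OO_S}\bigr)$ splits multiplicatively into the symbols mod $a$ and mod $b$. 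Tracking the units $\bar a, \bar b$ through the residue symbols and applying reciprocity (part (1)) to convert $\prs{a}{b}$ and $\prs{b}{a}$ is what produces the correction factor $\epsilon^t\bigl(\prs ab \prs ba\bigr)$. This bookkeeping of units is where I expect the main obstacle: one must be careful that the change of summation variable $d_1, d_2$ and the interplay between the multiplicativity of $\psi$ and of the residue symbol produce exactly the claimed Hilbert-symbol-flavored factor, and no extraneous term.

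Part (3) is the most direct: starting from $g(am, b; \epsilon^t) = \sum_{d \bmod b} \epsilon^t\bigl(\prs{d}{b\OO_S}\bigr)\psi\bigl(amd/b\bigr)$, I would make the substitution $d \mapsto \bar a d$ where $\bar a a \equiv 1 \bmod b$ (legitimate since $a$ is a unit mod $b$, as $\gcd(a,b)=1$). This sends $\psi(amd/b) \mapsto \psi(md/b)$, while the residue symbol picks up $\epsilon^t\bigl(\prs{\bar a}{b}\bigr) = \epsilon^{-t}\bigl(\prs{a}{b}\bigr)$, giving the stated factor. I would verify that the symbol of the inverse is the inverse of the symbol, which follows from the multiplicativity of the residue symbol in its top argument.

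Finally, for part (4), with $\p$ prime and $t \not\equiv 0 \bmod n$, I would observe that $g(1,\p;\epsilon^{-t})$ is the complex conjugate of $g(1,\p;\epsilon^t)$ up to a unit: since $\epsilon^{-t} = \overline{\epsilon^t}$ as a character into $\mu_n \subset \C^\times$, and replacing $d$ by $-d$ in the sum relates $\psi(d/\p)$ to $\overline{\psi(d/\p)}$. The product $g(1,\p;\epsilon^t)\,\overline{g(1,\p;\epsilon^t)} = |g(1,\p;\epsilon^t)|^2$ then evaluates to the norm $|\p|$ by the classical computation of the absolute value of a Gauss sum: expanding the product as a double sum over $d, d' \bmod \p$, the inner sum over the additive character vanishes unless $d \equiv d' \bmod \p$ (using the nondegeneracy of $\psi$ guaranteed by \cite[Lemma 1]{bb}), leaving exactly $|\p|$ surviving terms. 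The hypothesis $t \not\equiv 0 \bmod n$ ensures the residue symbol character is nontrivial, so that the Gauss sum does not degenerate to a Ramanujan-type sum with a different absolute value. I expect this evaluation to be standard once the conjugation relation between $\epsilon^t$ and $\epsilon^{-t}$ is pinned down, and I would cite \cite{ir} for the underlying Gauss sum norm computation.
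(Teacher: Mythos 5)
First, a point of comparison: the paper itself does not prove this proposition at all --- it cites Neukirch for reciprocity and Ireland--Rosen for the Gauss sum identities --- so your sketch, which is essentially the standard argument from those sources, is attempting more than the paper does. Parts (2) and (3) are correct in outline, but one repair is needed in (2): the factor $\epsilon^t\left(\prs{a}{b}\prs{b}{a}\right)$ is \emph{not} produced by reciprocity. It comes from multiplicativity of the power residue symbol alone: writing $d = d_1 b\bar b + d_2 a\bar a$ and rescaling the summation variables ($d_1\mapsto bd_1$, $d_2\mapsto ad_2$, legitimate since $\psi$ is trivial on $\OO_S$), the symbol $\prs{d}{ab\OO_S}=\prs{d}{a}\prs{d}{b}$ picks up exactly $\prs{b}{a}\prs{a}{b}$. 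If you actually invoked part (1) here, it would convert $\prs{a}{b}$ into $(b,a)_S\prs{b}{a}$ and introduce a Hilbert-symbol factor that does not appear in identity (2); reciprocity must be kept out of this computation.

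The genuine gap is in (4). Done carefully, your conjugation argument gives $g(1,\p;\epsilon^{-t}) = \epsilon^{t}\left(\prs{-1}{\p}\right)\overline{g(1,\p;\epsilon^{t})}$: the substitution $d\mapsto -d$ produces the unit $\epsilon^{t}\left(\prs{-1}{\p}\right)$, which you acknowledge (``up to a unit'') but never identify. Hence what your argument proves is $g(1,\p;\epsilon^t)\,g(1,\p;\epsilon^{-t}) = \epsilon^t\left(\prs{-1}{\p}\right)|g(1,\p;\epsilon^t)|^2 = \epsilon^t\left(\prs{-1}{\p}\right)|\p|$, whereas the proposition asserts the product is exactly $|\p|$. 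To close the gap you must show $\prs{-1}{\p}^t=1$, and this is precisely where the standing hypothesis that $F$ contains the $2n$-th (not merely the $n$-th) roots of unity enters: it forces $|\p|\equiv 1 \pmod{2n}$, so $(|\p|-1)/n$ is even and $\prs{-1}{\p}=(-1)^{(|\p|-1)/n}=1$. Without this observation the identity is only established up to an $n$-th root of unity, and it can genuinely fail over a field containing $\mu_n$ but not $\mu_{2n}$. As a smaller point, your description of the $|g|^2=|\p|$ evaluation (``the inner sum over the additive character vanishes unless $d\equiv d'$'') is too loose: the off-diagonal additive sums do not vanish individually, but rather cancel against the multiplicative character sum; that computation, however, is indeed standard and correctly attributed to Ireland--Rosen.
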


\section{A Weyl group action}\label{sec:Waction}
We begin by recalling some basic definitions and properties of root
systems.  For more details we refer to \cite{humph}.

Let $\Phi$ be an irreducible reduced root system of rank $r$ with Weyl
group $W$. Choose an ordering of the roots and let $\Phi = \Phi^+ \cup
\Phi^{-}$ be the decomposition into positive and negative roots.  Let
$\{\alpha_1, \alpha_2, \ldots, \alpha_r\}$ be the set of
simple roots, and let $\sigma_i$ be the Weyl group element
corresponding to the reflection through the hyperplane perpendicular
to $\alpha_i$.   Let $\{\wt_{1},\dotsc ,\wt_{r} \}$ be the
fundamental weights.

Let $\Lambda$ be the lattice generated by the roots.  The Weyl group
$W$ acts on $\Lambda$ from the left: $\lambda \mapsto w\lambda$.  We
choose a $W$-invariant inner product
$\innprod{\phantom{a}}{\phantom{a}}$ on $\Lambda\otimes \R$ normalized
so that the short roots of $\Phi$ have squared length $1$, that is,
$\innprod{\alpha}{\alpha}=1$ for all short roots $\alpha $.  With this
convention, the inner product of any two roots is always a
half-integer.\footnote{We remark that this is different from the usual
Bourbaki convention, which takes the normalization
$\innprod{\alpha}{\alpha} = 2$ for all short roots \emph{except} those
in type $B_r$, where the short roots satisfy $\innprod{\alpha}{\alpha
}=1$.}  Our normalization implies that for $\alpha \in \Phi$, we have
\[
\norm{\alpha}^{2} = \begin{cases}
1&\text{for all $\alpha$ in types $A, D, E$,}\\
1&\text{for $\alpha$ a short root in types $B, C, F_{4}, G_{2}$,}\\
2&\text{for $\alpha$ a long root in types $B, C, F_{4}$,}\\
3&\text{for $\alpha$ a long root in type $G_{2}$}.
\end{cases}
\]

Label the nodes of the Dynkin diagram of $\Phi$.  We say that nodes
$i$ and $j$ are \emph{adjacent} if $i\neq j$ and $(\sigma_i
\sigma_j)^2\not =1$.  The group $W$ is generated by the simple
reflections $\sigma_{i}$, which satisfy the relations
$(\sigma_i\sigma_j)^{r(i,j)} = 1$, where for $1\leq i, j\leq r$ we
have
\begin{equation}\label{eqn:WRelations}
r(i,j)=\left\{\begin{array}{ll}
1 & \text{if $i=j$,}\\
2 & \text{if $i, j$ are not adjacent,}\\
3 & \text{if $i, j$ are adjacent and $\alpha_{i}$, $\alpha_{j}$
have the same length,}\\
4 & \text{if $i, j$ adjacent, $\Phi \not = G_{2}$ and $\alpha_{i}$, $\alpha_{j}$
have different lengths,}\\
6 & \text{for $i, j$ adjacent in type $G_{2}$.}
\end{array}\right.
\end{equation}

Let $c (i,j)= 2
\innprod{\alpha_{i}}{\alpha_{j}}/\innprod{\alpha_{j}}{\alpha_{j}}\in
\Z$ be the Cartan integer attached to the simple roots $\alpha_{i}$,
$\alpha_{j}$.  These integers encode the action of $W$ on the simple
roots:
\begin{equation}\label{eqn:WactionRoots}
\sigma_{j}\colon \alpha_{i} \longmapsto \alpha_{i} - c (i,j)\alpha_{j}.
\end{equation}

\begin{comment}
look past the end document for a check
\end{comment}

Let $\length\colon W\rightarrow \Z_{\geq 0}$ be the length function with
respect to the generators $\sigma_{1},\dotsc ,\sigma_{r}$, and put
$$\sgn(w)=(-1)^{\length(w)}.$$
For any $w\in W$, define $\Phi (w)$ by
\[
\Phi (w) = \{\alpha \in \Phi^{+} \mid w\alpha \in \Phi^{-} \}.
\]
We have $\length (w) = |\Phi (w)|$.
If $w\in W$ satisfies $\length (\sigma_{i}w)=\length (w) + 1$ for a
simple reflection $\sigma_{i}$, then 
\begin{equation}\label{eqn:phiwleft}
\Phi (\sigma_{i}w) = \Phi (w) \cup \{w^{-1} \alpha_{i}\}.
\end{equation}
Similarly, if $\length (w\sigma_{i})=\length (w) + 1$ for a
simple reflection $\sigma_{i}$, then
\begin{equation}\label{eqn:phiwright}
\Phi(w\sigma_i)=\sigma_i\left(\Phi(w)\right)\cup \{\alpha_i\}.
\end{equation}

Moreover, define $\rho \in \Lambda \otimes \Q$ by $\rho =
\sum \wt_{i} $.  Then
\[
\rho -w^{-1}\rho  = \sum_{\alpha \in \Phi (w)} \alpha.
\]

Any $\lambda \in \Lambda$ has a unique representation as an integral
linear combination of the simple roots
\begin{equation}\label{eqn:repn}
\lambda =k_1\alpha_1+k_2\alpha_2+\cdots+k_r\alpha_r.
\end{equation}
Let
$$d(\lambda )=k_1+\dotsb +k_{r}$$ be the usual height function on
$\Lambda$.  Introduce the standard partial ordering on $\Lambda$ by
defining $\lambda\succeq 0$ if $\lambda$ is a nonnegative linear
combination of the simple roots.  Given $\lambda , \lambda '\in
\Lambda$, define $\lambda \succeq\lambda '$ if $\lambda -\lambda'
\succeq 0$.

Our goal in this section is to study a certain Weyl group action.  In
fact we will define a collection of Weyl group actions indexed by some
parameters.  The first parameter is a strongly dominant weight $\theta
$.  The weight $\theta$ determines an $r$-tuple $\ell=(l_1, \ldots ,
l_r) $ of nonnegative integers by $\theta = \sum
(l_{i}+1)\wt_{i}$.  We call $\ell$ a \emph{twisting
parameter}; ultimately it will be connected to the global twisting
parameter $\bfm$ in the construction of our multiple Dirichlet series.
The weight $\theta$ determines an action of $W$ on the root
lattice through affine linear transformations by 
\begin{equation}\label{eqn:dotaction}
w\bullet\lambda  = w (\lambda -\theta) + \theta.
\end{equation}
It is not hard to check that for $w=\sigma_{i}$, a simple reflection,
we have
\[
\sigma_{i}\bullet \lambda  = \sigma_{i}\lambda + (l_{i}+1)\alpha_{i}.
\]

The next parameter is a positive integer $n$.  The integer $n$
determines a collection of integers $\{m (\alpha)\}_{\alpha \in
\Phi^{+}}$ by 
\begin{equation}\label{eqn:defofm}
m(\alpha)=n/\gcd(n, \norm{\alpha}^{2}).
\end{equation}

Next we choose a positive integer $q$.  Together with $q$ we consider
a collection of complex numbers $\gamma (i)\in \C$, indexed by the
integers modulo $n$, and such that $\gamma (0) = -1$ and
\[
\gamma (i)\gamma (-i) = 1/q\quad \text{if $i\not = 0 \mod n$.}
\]
Later $q$ will be taken to be the norm of a prime $\p$ in
$\OO_{S}$, and up to a factor of $q$ the number $\gamma (i)$ will be the
Gauss sum $g (1,\p ; \epsilon^{i})$ from \eqref{eqn:gausssum}.

We are almost ready to define our action.  Choose and fix parameters
$(\theta , n, q, \{\gamma (i) \})$ as above.  Let $\polyring = \C
[\Lambda]$ be the ring of Laurent polynomials on the lattice
$\Lambda$.  Hence $\polyring$ consists of all expressions of the form
$f = \sum_{\beta \in \Lambda} c_{\beta} \x^{\beta}$, where $c_{\beta}\in
\C$ and almost all are zero, and the multiplication of monomials is
defined by addition in $\Lambda$: $\x ^{\beta}\x^{\lambda} = \x^{\beta
+\lambda}$.  Given $f$, the set of $\{\beta \mid c_{\beta}\not =0 \}$
is called the support of $f$, and is denoted $\Supp f$.  We 
identify $\polyring$ with $\C [x_{1}, x_{1}^{-1}, \dotsc , x_{r},
x_{r}^{-1}]$ via $\x^{\alpha_{i}} \mapsto x_{i}$.

We define a ``change of variables'' action on $\polyring$ as follows.  
Write $\x=(x_1,\ldots,
x_r)$.  Then we define $\sigma_j\x=\x^\prime$, where
\begin{equation}\label{eqn:wiaction1}
(\x^\prime)_{i}=q^{-c (i,j)}x_{i}x_{j}^{-c (i,j)}.
\end{equation}
Note that $(\sigma_{j}\x)_{j} = 1/ (q^{2}x_{j})$ for all $j$, and
$(\sigma_{j}\x )_{i} =x_{i}$ if and only if $\alpha_{i}$ and
$\alpha_{j}$ are orthogonal in $\Phi$.  It is easy to verify that this
action of the simple reflections extends to all of $W$, since it is
essentially a reformulation of the standard geometric action of $W$ on $\Lambda
\otimes \R$ (cf.~\eqref{eqn:WactionRoots}).  One can also easily check 
that if $f_{\beta }
(\x) = \x^{\beta}$ is a monomial, then
\begin{equation}\label{eqn:monomialaction}
f_{\beta} (w\x) = q^{d (w^{-1}\beta -\beta)}\x^{w^{-1}\beta}.
\end{equation}

Now let $\Lambda '\subset \Lambda$ be the sublattice generated by the
set $\{m (\alpha) \alpha \}_{\alpha \in \Phi}$.  A direct computation
with Cartan matrices shows that $W$ takes $\Lambda '$ into itself.
Let $\local$ be the field of fractions of $A$.  We have the
decomposition
\begin{equation}\label{eqn:decomposition}
\local = \bigoplus_{\lambda \in \Lambda /\Lambda '} \local_{\lambda },
\end{equation}
where $\local_{\lambda }$ consists of the functions $f/g$ ($f,g\in \polyring $)
such that $\Supp g$ lies in the kernel of the map $\nu \colon \Lambda
\rightarrow \Lambda /\Lambda '$, and $\nu$ maps $\Supp f$ to $\lambda
$.

We now define the action of $W$ on $ \local $ for a generator
$\sigma_i\in W$.  Put $m = m (\alpha_{i})$ (cf.~\eqref{eqn:defofm}).
For any $\beta \in \Lambda$, define the integer
\begin{equation}\label{label:eqn:defofmu}
  \mu _{\ell ,i} (\beta) = d (\sigma_{i}\bullet \beta -\beta) \in \Z.
\end{equation}
Let $(k)_m\in \{0,\dotsc ,m-1 \}$ denote the remainder upon division
of $k$ by $m$, and define rational functions
\begin{align}
  \PP_{\beta ,\ell ,i} (x) &= (qx)^{l_{i}+1- (\mu _{\ell ,i} (\beta))_{m}}\frac{1-1/q}{1- (qx)^{m}/q},\label{eqn:Peqn}\\
  \QQ_{\beta , \ell , i} (x) &= -\gamma(-\norm{\alpha_{i}}^{2}
\mu_{\ell ,i}(\beta)) (qx)^{l_{i}+1-m}\frac{1- (qx)^{m}}{1- (qx)^{m}/q}.\label{eqn:Qeqn}
\end{align}

\begin{defin}\label{def:wiaction}
Let $f (\x) \in \local_{\beta }$.  Define
\begin{equation}\label{wiaction}
(f|_\ell \sigma_i) (\x ) = (\PP_{\beta,\ell ,i} (x_{i}) +
\QQ_{\sigma_{i}\bullet \beta ,\ell ,i} (x_{i}))f (\sigma_{i}\x )\in \local .
\end{equation}
We extend this definition linearly to all of $\local$ using
\eqref{eqn:decomposition}.
\end{defin}

\begin{theorem}\label{thm:waction}
The action of the generators \eqref{wiaction} extends to give an
action of $W$ on $\local $.  More precisely, the action
\eqref{wiaction} satisfies the defining relations
\eqref{eqn:WRelations}.
\end{theorem}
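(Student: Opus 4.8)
The plan is to use the standard fact that, to promote the generator-level definition \eqref{wiaction} to a genuine action of $W$, it suffices to verify that the operators $|_\ell\sigma_i$ satisfy the Coxeter relations \eqref{eqn:WRelations}; reading \eqref{wiaction} as a right action, one must show $f|_\ell(\sigma_i\sigma_j)^{r(i,j)} = f$ for every pair $i,j$. First I would pin down how the operator interacts with the grading \eqref{eqn:decomposition}. A short computation with \eqref{eqn:monomialaction} and the degree shifts built into $\PP_{\beta,\ell,i}$ and $\QQ_{\beta,\ell,i}$ shows that $|_\ell\sigma_i$ carries $\local_{\beta}$ into $\local_{\sigma_i\bullet\beta}$. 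Since $\sigma_i\bullet(\cdot)$ is an involution, and more generally $(\sigma_i\sigma_j)^{r(i,j)}\bullet(\cdot)$ is the identity because $(\sigma_i\sigma_j)^{r(i,j)}=1$ in $W$, iterating the operators around any relation word returns to the graded piece $\local_{\beta}$; thus the $\mu$'s and subscripts appearing in $\PP,\QQ$ can be tracked unambiguously throughout the computation.

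The quadratic relation (the case $i=j$, $r(i,i)=1$) is then a direct calculation. Using $(\sigma_i\x)_i = 1/(q^2 x_i)$ and the fact that the underlying change of variables is an involution, $f|_\ell\sigma_i|_\ell\sigma_i$ equals $f$ times a product of two prefactor evaluations, namely $\PP_{\sigma_i\bullet\beta,\ell,i}(x_i)+\QQ_{\beta,\ell,i}(x_i)$ and the same bracket evaluated at $1/(q^2x_i)$. The identity $\mu_{\ell,i}(\sigma_i\bullet\beta) = -\mu_{\ell,i}(\beta)$, which follows from \eqref{label:eqn:defofmu} because $\sigma_i\bullet(\cdot)$ is an involution and $d$ is linear, makes the two $\QQ$-terms pair through the relation $\gamma(k)\gamma(-k)=1/q$. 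With the explicit forms \eqref{eqn:Peqn}--\eqref{eqn:Qeqn} one then checks that the product of the two bracketed factors collapses to $1$, so the quadratic relation reduces to a single rational-function identity in $x_i$.

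For $i\neq j$ the essential observation is that the problem localizes to rank two. The prefactor in \eqref{wiaction} for $\sigma_i$ involves only $x_i$, while the change of variables \eqref{eqn:wiaction1} mixes $x_i$ solely with $x_j$; the remaining variables $x_k$ are carried along by the genuine geometric action of $W$, whose braid relations already hold. Hence the composite change of variables along $(\sigma_i\sigma_j)^{r(i,j)}$ is the identity on all variables, and the relation reduces to showing that the accumulated product of prefactors, a rational function in $x_i,x_j$ alone, equals $1$. When $i,j$ are non-adjacent we have $c(i,j)=c(j,i)=0$, the two operators commute outright, and the relation is immediate. When $i,j$ are adjacent, the rank-two subsystem they generate is of type $A_2$, $B_2$, or $G_2$, according as $r(i,j)=3,4,6$, and one verifies $(\sigma_i\sigma_j)^{r(i,j)}=1$ separately in each case. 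Because the prefactors depend on the grading only through the residues $(\mu_{\ell,i}(\beta))_m$ and are periodic in the $x_i$-degree modulo $m(\alpha_i)$, each verification may be carried out on monomials and reduced to finitely many residue classes.

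The main obstacle is this adjacent rank-two computation, and within it the $G_2$ case: there $r(i,j)=6$, so twelve prefactor factors must telescope to $1$, and since $\alpha_i,\alpha_j$ have different lengths one has $m(\alpha_i)\neq m(\alpha_j)$, with correspondingly different exponents and $\gamma$-arguments in \eqref{eqn:Peqn}--\eqref{eqn:Qeqn}. I would organize the calculation by exploiting the common denominator $1-(qx)^{m}/q$ of $\PP+\QQ$, tracking how $x_i$ and $x_j$ transform monomially through each intermediate word, and repeatedly invoking $\mu_{\ell,i}(\sigma_i\bullet\beta)=-\mu_{\ell,i}(\beta)$ together with $\gamma(k)\gamma(-k)=1/q$ so that the $\gamma$-factors cancel in pairs. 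What remains is then a finite identity over the residue classes, most safely confirmed with the aid of symbolic computation.
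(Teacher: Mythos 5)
Your proposal rests on a structural claim that is false, and the error undermines the whole strategy. You assert that $|_\ell\sigma_i$ carries $\local_\beta$ into $\local_{\sigma_i\bullet\beta}$, so that iterating the operators along a relation word keeps the result homogeneous and lets you track a single ``accumulated product of prefactors'' that must collapse to $1$. In fact the operator maps $\local_\beta$ into the \emph{sum} $\local_\beta\oplus\local_{\sigma_i\bullet\beta}$: in $(f|_\ell\sigma_i)(\x)=\bigl(\PP_{\beta,\ell,i}(x_i)+\QQ_{\sigma_i\bullet\beta,\ell,i}(x_i)\bigr)f(\sigma_i\x)$, the monomial $(qx_i)^{l_i+1-(\mu_{\ell,i}(\beta))_m}$ in $\PP$ places the $\PP$-term back in $\local_\beta$, and only the $\QQ$-term lands in $\local_{\sigma_i\bullet\beta}$ (this is exactly \eqref{eq:first} in the paper). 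Consequently a word of length $k$ applied to a homogeneous $f$ produces a sum of up to $2^k$ homogeneous terms, and the prefactor attached at each step depends on which branch ($\PP$ or $\QQ$) was taken at every previous step; nothing telescopes. Already for the involution $\sigma_i^2$ the correct result is the four-term sum $\PP_{\beta}(x)\PP_{\beta}(1/(q^{2}x))+\QQ_{\sigma_{i}\bullet\beta}(x)\PP_{\beta}(1/(q^{2}x))+\PP_{\sigma_{i}\bullet\beta}(x)\QQ_{\sigma_{i}\bullet\beta}(1/(q^{2}x))+\QQ_{\beta}(x)\QQ_{\sigma_{i}\bullet\beta}(1/(q^{2}x))$ of \eqref{eq:second}, which is \emph{not} the product of the two binomial brackets you write down; and proving it equals $1$ requires a case split on whether $\mu_{\ell,i}(\beta)\equiv 0 \pmod m$ --- in the vanishing case $\gamma(0)=-1$, the relation $\gamma(k)\gamma(-k)=1/q$ is unavailable and all four terms must cancel jointly, a case your argument does not see. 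Likewise the braid relation is not ``twelve prefactor factors telescoping to $1$'' but an identity between two sums of eight triple products with mixed subscripts (the paper's \eqref{eqn:PequalsQ}), verified case by case according to the residues of $\mu_i,\mu_j$ mod $n$.

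You also miss the second genuine difficulty, which arises precisely in the $B_2$, $F_4$ and $G_2$ relations you flag as the main obstacle: when $n$ is not prime to all squared root lengths (e.g.\ $n$ even for an order-$4$ relation, or $3\mid n$ for $G_2$), the reflections $\sigma_i$, $\sigma_j$ see different moduli $m(\alpha_i)\neq m(\alpha_j)$, and the grading by $\Lambda/\Lambda'$ is too coarse to run even the corrected bookkeeping. The paper handles this with the distribution relation $1/(1-z)=1/(1-z^{2})+z/(1-z^{2})$ of \eqref{eq:distrib} (and a cubic analogue for $G_2$), splitting each $\PP$ and $\QQ$ term into pieces graded by $\Lambda/n\Lambda$, so that each application of a reflection produces \emph{four} terms rather than two. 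Your closing reduction ``to finitely many residue classes checked symbolically'' is the right spirit --- it is essentially what the paper does --- but the identities to be checked are sums of products indexed by branch histories, not a single product of prefactors, and without the correct statement about how $|_\ell\sigma_i$ interacts with the grading your setup generates the wrong identities.
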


\begin{proof}
The proof consists of explicit computations that are very similar to
those done in the proof of \cite[Lemma~3.2]{qmds}.  The main point is
that verifying the relations \eqref{eqn:WRelations} amounts to
checking certain identities among rational functions.  These
identities depend only in a minor way on $n$ and are easily
implemented on a computer.  Since the computations are rather lengthy,
we content ourselves with only explicitly presenting some of them
here.  For the general computations, we merely give an overview and
will leave most of the details to the reader.

First we show that the $\sigma_{i}$ act by involutions on $\local $.  Let
$m=n/\gcd (n,\norm{\alpha_{i}}^{2})$ as above. 
It suffices to check on functions of the form $f (\x)=\x^{\beta }/h
(\x)$, where $h (\x)\in \local_{0}$.  We compute
\eqref{wiaction} and separate the result into homogeneous terms, and find
\begin{align}\label{eq:first}
\PP_{\beta,\ell ,i} (x_{i}) f (\sigma_{i}\x )&\in \local_{\beta}\\
\QQ_{\sigma_{i}\bullet \beta ,\ell ,i} (x_{i}) f (\sigma_{i}\x)&\in \local_{{\sigma_{i}\bullet\beta}},
\end{align}
where we have abused notation and have denoted the image of $\beta$ in
$\Lambda /\Lambda '$ by $\beta$ as well.  Applying $\sigma_{i}$ again,
we find
\begin{multline}\label{eq:second}
f|_{\ell}\sigma_{i}^{2}  = \Bigl(\PP_{\beta} (x)\PP_{\beta} (1/(q^{2}x)) +
\QQ_{\sigma_{i}\bullet \beta }(x)\PP_{\beta} (1/(q^{2}x)) \\
+ \PP_{\sigma_{i}\bullet \beta } (x)\QQ_{\sigma_{i}\bullet \beta }
(1/(q^{2}x)) + \QQ_{\beta} (x) \QQ_{\sigma_{i}\bullet \beta } (1/(q^{2}x))\Bigr)
f.
\end{multline}
In \eqref{eq:second} we lightened the notation by removing $\ell ,i$
from $\PP , \QQ$ and by writing $x$ for $x_{i}$.  Now using the
identities
\[
\mu_{\ell ,i} (\beta) = -\mu_{\ell ,i} (\sigma_{i}\bullet \beta )
\]
and 
\[
(-k)_{m} = \begin{cases}
0&\text{if $m|k$,}\\ 
m-k&\text{otherwise},
\end{cases}
\]
we can check that the sum of the four products of $\PP, \QQ$ on the right of
\eqref{eq:second} equals $1$.  Indeed, there are two cases to
consider, according to whether $\mu_{\ell ,i} (\beta)$ vanishes mod
$m$ or
not.  In the second case we have 
\begin{align*}
\QQ_{\sigma_i\bullet\beta}(x)\PP_{\beta} (1/(q^{2}x))
+ \PP_{\sigma_i\bullet\beta} (x)\QQ_{\sigma_i\bullet\beta} (1/(q^{2}x)) &= 0,\\
 \PP_{\beta} (x)\PP_{\beta} (1/(q^{2}x)) + \QQ_{\beta} (x) \QQ_{\sigma_i\bullet\beta}
(1/(q^{2}x)) &= 1,
\end{align*}
and in the first case there are nontrivial cancellations among all
four terms that lead to the desired sum.  This shows that the
$\sigma_{i}$ act by involutions.

Now we explain how to prove that the rest of \eqref{eqn:WRelations}
hold.  We consider a relation of the form
$\sigma_{i}\sigma_{j}\sigma_{i} = \sigma_{j}\sigma_{i}\sigma_{j}$.
This is only possible if $\norm{\alpha_{i}}=\norm{\alpha_{j}}$, and so
without loss of generality we can assume $m=n$.

We again put $f (\x)=\x^{\beta}/h (\x )$ and compute $f_{1} (\x
):=(f|_{\ell}\sigma_{i}\sigma_{j}\sigma_{i} )(\x )$ and $f_{2} (\x )
:= (f|_{\ell}\sigma_{j}\sigma_{i}\sigma_{j}) (\x )$.  Each of these
expands to a sum of eight products.  Each product consists of three
factors, and the factors are $\PP$'s and $\QQ$'s with various inputs.
In both $f_{1}$ and $f_{2}$ we eliminate a common denominator coming
from $h (\x)$, and then can factor out
$\x^{\sigma_{i}\sigma_{j}\sigma_{i}\beta}$, some common $q$-powers,
and some common monomials coming from the twisting parameter.  After
this we find that the eight remaining terms in $f_{1}$ are
\begin{multline}\label{eq:bigmess}
%PPP
\PP_{\beta,i} (x_{i}) \PP_{\beta,j} (qx_{i}x_{j})\PP_{\beta ,i}(x_{j}),\quad  
%PPQ
\PP_{\sigma_{i}\bullet\beta,i} (x_{i}) \PP_{\sigma_{i}\bullet\beta,j} (qx_{i}x_{j})\QQ_{\sigma_{i}\bullet \beta ,i}(x_{j}),\\
%PQP
\PP_{\sigma_{j}\bullet\beta,i} (x_{i}) \QQ_{\sigma_{j}\bullet \beta,j} (qx_{i}x_{j})\PP_{\beta ,i}(x_{j}),\quad 
%PQQ
\PP_{\sigma_{j}\sigma_{i}\bullet\beta,i} (x_{i}) \QQ_{\sigma_{j}\sigma_{i}\bullet\beta,j} (qx_{i}x_{j})\QQ_{\sigma_{i}\bullet\beta ,i}(x_{j}), \\
%QPP
\QQ_{\sigma_{i}\bullet \beta,i} (x_{i}) \PP_{\beta ,j} (qx_{i}x_{j})\PP_{\beta ,i}(x_{j}),\quad
%QPQ
\QQ_{\beta,i} (x_{i}) \PP_{\sigma_{i}\bullet\beta,j} (qx_{i}x_{j})\QQ_{\sigma_{i}\bullet\beta ,i}(x_{j}), \\
%QQP
\QQ_{\sigma_{i}\sigma_{j}\bullet\beta,i} (x_{i}) \QQ_{\sigma_{j}\bullet\beta,j} (qx_{i}x_{j})\PP_{\beta ,i}(x_{j}),\\
%QQQ
\QQ_{\sigma_{i}\sigma_{j}\sigma_{i}\bullet\beta,i} (x_{i}) \QQ_{\sigma_{j}\sigma_{i}\sigma_{i}\bullet\beta,j} (qx_{i}x_{j})\QQ_{\sigma_{i}\bullet\beta ,i}(x_{j}),
\end{multline}
and that $f_{2}$ consists of the same eight terms with $i$ and $j$
switched.  Here we again eliminate $\ell$ from the notation.  Note
that each term in $f_{1}$, $f_{2}$ can be uniquely identified by
giving a length three word in $\PP$ and $\QQ$ and saying whether it
appears in $f_{1}$ or $f_{2}$.  The first term in \eqref{eq:bigmess},
for example, can be encoded as $(\PP \PP \PP)_{1}$.  

In summary, to prove $f_{1}=f_{2}$, we need to verify the identity of
rational functions
\begin{equation}\label{eqn:PequalsQ}
(\PP \PP \PP)_{1} + \dotsb + (\QQ \QQ \QQ)_{1} = (\PP \PP \PP)_{2} + \dotsb + (\QQ \QQ \QQ)_{2}. 
\end{equation}
This can be done as follows.  Let $\mu_{i} = \mu_{\ell , i} (\beta)$
and $\mu_{j}=\mu_{\ell ,j} (\beta)$.  Then the identities needed to
check \eqref{eqn:PequalsQ} depend in a minor way on $\mu_{i},\mu_{j}$
mod $n$:
\begin{enumerate}
\item We have $(\PP \PP \PP )_{1}= (\PP \PP \PP)_{2}$, $(\QQ \QQ
\QQ)_{1} = (\QQ \QQ \QQ)_{2}$, $(\PP \QQ \QQ)_{1} = (\QQ \QQ
\PP)_{2}$, and $(\QQ\QQ\PP)_{1} = (\PP \QQ \QQ)_{2}$, independent of 
$\mu_{i}, \mu_{j} \mod n$.
\item If $\mu_{i}$ and $\mu_{j}$ are nonzero mod $n$, then 
$(\QQ \PP \QQ)_{1}= (\QQ \PP \QQ)_{2}$, $(\PP \PP \QQ)_{1} + (\QQ \PP \PP
)_{1} = (\PP \QQ \PP)_{2}$, and $(\PP \QQ \PP)_{1} = (\PP \PP \QQ)_{2} + (\QQ \PP \PP
)_{2}.$
\item \label{thirdcase} If $\mu_{i}=0\mod n$ and $\mu_{j}\not =0\mod
n$, then $(\PP \QQ \PP)_{1} = (\PP \PP \QQ)_{2} + (\QQ \PP \PP)_{2}$
and $(\PP \PP \QQ)_{1} +(\QQ \PP \PP)_{1} +(\QQ \PP \QQ)_{1} = (\PP
\QQ \PP)_{2} +(\QQ \PP \QQ)_{2}$.
\item If $\mu_{i}\not =0\mod n$ and $\mu_{j}=0\mod
n$, then we have the same identities as in case \ref{thirdcase}, but
with $1$ and $2$ switched.
\end{enumerate}
These identities prove \eqref{eqn:PequalsQ}, which implies
$(f|_{\ell}\sigma_{i}\sigma_{j}\sigma_{i} ) (\x ) =
(f|_{\ell}\sigma_{j}\sigma_{i}\sigma_{j}) (\x )$.

Lengthier but entirely similar computations show that the relations
$(\sigma_{i}\sigma_{j})^{4}=1$ and $(\sigma_{1}\sigma_{2})^{6}=1$ (for
$\Phi = G_{2}$) hold, as long as $n$ is relatively prime to all
squared root lengths.  It is also easy to check the required relations
of the form $(\sigma_{i}\sigma_{j})^{2}=1$ if $\alpha_{i}$ and
$\alpha_{j}$ have the same length.

If $n$ is not relatively prime to all squared root lengths, then the
computations are only slightly more complicated.  For instance, assume
that $n = 2m$ is even and that $\sigma_{i},\sigma_{j}$ satisfy
$(\sigma_{i}\sigma_{j})^{4}=1$.  Assume $\norm{\alpha_{i}}^{2}=2$,
$\norm{\alpha_{j}}^{2}=1$, and let $f = \x^{\beta}/h (\x)$ as before.
Apply $\sigma_{i}$ to $f$ as in \eqref{eq:first}. One can then use the
distribution relation
\begin{equation}\label{eq:distrib}
\frac{1}{1-z} = \frac{1}{1-z^{2}} + \frac{z}{1-z^{2}}
\end{equation}
to write each of the $\PP$ and $\QQ$ terms on the right of
\eqref{eq:first}---which have denominator $1- (qx)^{m}/q$---as sums of
two terms with denominator $1- (qx)^{2m}/q^{2}$.  The resulting two
terms from $\PP$ lie in $\local _{\beta} $ and $\local _{\beta
+m\alpha_{i}}$, whereas the two terms from $\QQ$ lie in $\local
_{\sigma_{i}\bullet \beta}$ and $\local _{\sigma_{i}\bullet \beta
+m\alpha_{i}}$ (here the congruence classes should be taken in
$\Lambda /n\Lambda$, not $\Lambda /\Lambda '$).  Hence the computation
is essentially the same as that for $n$ odd, except that each
application of $\sigma_{i}$ to $f$ results in a sum of \emph{four}
terms instead of the two in \eqref{eq:first}.  The computation for
$G_{2}$ is similar, except that one replaces \eqref{eq:distrib} with
an identity with denominator $1-z^{3}$.  Finally, the same trick works
to check the required relations of the form
$(\sigma_{i}\sigma_{j})^{2}=1$ when $\alpha_{i}$ and $\alpha_{j}$ have
different lengths.
\end{proof}

We are now about to prove the main theorem of this section, but before
doing so we require some notation.  Define
\[
\Delta (\x) = \prod_{\alpha >0} (1-q^{m (\alpha)d (\alpha)}\x^{m
(\alpha)\alpha})\quad \text{and}\quad D(\x) = \prod_{\alpha >0}
(1-q^{m (\alpha)d (\alpha)-1}\x^{m (\alpha)\alpha}).
\]
 Put 
\[
j(w,\x)=\Delta(\x)/\Delta(w\x).
\]
This function satisfies the $1$-cocycle relation
\[
j (ww',\x) = j (w,w'\x)j (w',\x)
\]
and can be explicitly computed as follows:

\begin{lemma}\label{lem:jlemma}
We have 
\[
j (w,\x) = \sgn (w)q^{d (\beta)}\x^{\beta },
\]
where 
\[
\beta  = \sum_{\alpha \in \Phi (w)} m (\alpha)\alpha.
\]
\end{lemma}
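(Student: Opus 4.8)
The plan is to compute $j (w,\x) = \Delta (\x)/\Delta (w\x)$ head-on, by tracking how each factor of $\Delta$ behaves under the substitution $\x\mapsto w\x$, rather than by invoking the cocycle relation.

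First I would apply the monomial rule \eqref{eqn:monomialaction} factor by factor to $\Delta (w\x)$. For $\alpha\in\Phi^{+}$, the factor $1-q^{m (\alpha)d (\alpha)}\x^{m (\alpha)\alpha}$ evaluated at $w\x$ becomes $1-q^{m (\alpha)d (\alpha)}\,q^{d (w^{-1} (m (\alpha)\alpha)-m (\alpha)\alpha)}\x^{w^{-1} (m (\alpha)\alpha)}$. Because $d$ is additive the two $q$-exponents collapse to $m (\alpha)\,d (w^{-1}\alpha)$, and because $W$ preserves the inner product we have $\norm{w^{-1}\alpha}=\norm{\alpha}$ and hence $m (w^{-1}\alpha)=m (\alpha)$. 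So this factor is precisely the $\Delta$-factor indexed by the root $\gamma=w^{-1}\alpha$, namely $1-q^{m (\gamma)d (\gamma)}\x^{m (\gamma)\gamma}$. This gives $\Delta (w\x)=\prod_{\gamma\in w^{-1}\Phi^{+}} (1-q^{m (\gamma)d (\gamma)}\x^{m (\gamma)\gamma})$, where $m$ is extended to all of $\Phi$ through the length.

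Next I would reorganize using the decomposition $w^{-1}\Phi^{+}= (\Phi^{+}\smallsetminus\Phi (w))\sqcup (-\Phi (w))$, which is immediate from the definition of $\Phi (w)$. Splitting the product accordingly and writing $\gamma=-\delta$ with $\delta\in\Phi (w)$ on the negative part (using $m (-\delta)=m (\delta)$ and $d (-\delta)=-d (\delta)$), the negative factors become $1-q^{-m (\delta)d (\delta)}\x^{-m (\delta)\delta}$. Forming $\Delta (\x)/\Delta (w\x)$ then cancels all factors indexed by $\gamma\in\Phi^{+}\smallsetminus\Phi (w)$, leaving $j (w,\x)=\prod_{\delta\in\Phi (w)}\frac{1-v_{\delta}}{1-v_{\delta}^{-1}}$, where $v_{\delta}=q^{m (\delta)d (\delta)}\x^{m (\delta)\delta}$. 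Since $\frac{1-v}{1-v^{-1}}=-v$, collecting the $|\Phi (w)|=\length (w)$ minus signs produces $\sgn (w)$, while additivity of $d$ turns $\prod_{\delta}v_{\delta}$ into $q^{d (\beta)}\x^{\beta}$ with $\beta=\sum_{\delta\in\Phi (w)}m (\delta)\delta$, which is the claim.

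I expect the only real obstacle to be the bookkeeping in the first step—verifying that $\x\mapsto w\x$ carries the $\Delta$-factor of $\alpha$ exactly onto that of $w^{-1}\alpha$. This hinges on the precise shape of the exponent in \eqref{eqn:monomialaction} together with two structural facts: the height $d$ is linear on $\Lambda$, and $m (\cdot)$ is $W$-invariant because $W$ acts by isometries. Once that correspondence is pinned down, everything else is formal; alternatively, one could organize the same computation as an induction on $\length (w)$ via the cocycle relation and \eqref{eqn:phiwleft}, but the direct product manipulation seems cleaner.
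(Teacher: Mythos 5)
Your proposal is correct and follows essentially the same route as the paper: both compute $\Delta (w\x)$ factor by factor via \eqref{eqn:monomialaction}, use $W$-invariance of $m (\cdot)$ and linearity of $d$ to reindex the product over $w^{-1}\Phi^{+}$, split off the roots in $\Phi (w)$, and cancel against $\Delta (\x)$, with your identity $(1-v)/(1-v^{-1})=-v$ being the same manipulation the paper performs by multiplying numerator and denominator by $\prod_{\alpha\in\Phi (w)} -q^{m (\alpha)d (\alpha)}\x^{m (\alpha)\alpha}$. The bookkeeping you flag as the only obstacle is handled exactly as you describe, so there is nothing missing.
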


\begin{proof}
Using \eqref{eqn:monomialaction} we have
\begin{align*}
  \Delta (w\x) &=
\prod_{\alpha >0} (1-q^{m (w^{-1}\alpha ) d (w^{-1}\alpha )}\x^{m
  (w^{-1}\alpha ) w^{-1}\alpha})\\
&=\prod_{\substack{\alpha >0\\w^{-1}\alpha<0}} 
(1-q^{m (w^{-1}\alpha ) d (w^{-1}\alpha )}\x^{m(w^{-1}\alpha )
  w^{-1}\alpha}) \\
&\qquad \times \prod_{\substack{\alpha >0\\w^{-1}\alpha>0}} 
(1-q^{m (w^{-1}\alpha ) d (w^{-1}\alpha )}\x^{m(w^{-1}\alpha )
  w^{-1}\alpha}) \\
& =\prod_{\substack{\alpha <0\\w\alpha>0}} 
(1-q^{m (\alpha ) d (\alpha )}\x^{m(\alpha )\alpha}) 
\prod_{\substack{\alpha >0\\w\alpha>0}} 
(1-q^{m (\alpha ) d (\alpha )}\x^{m(\alpha )\alpha})\\
& =\prod_{\substack{\alpha >0\\w\alpha<0}} 
(1-q^{-m (\alpha ) d (\alpha )}\x^{-m(\alpha )\alpha}) 
\prod_{\substack{\alpha >0\\w\alpha>0}} 
(1-q^{m (\alpha ) d (\alpha )}\x^{m(\alpha )\alpha}),
\end{align*}
where we obtain the third equality above with the substitution
$\alpha\mapsto w^{-1}\alpha$ in both products.
Thus $j(w,\x)$ is equal to the quotient
\begin{align*}
\Delta (\x)/\Bigl(\prod_{\alpha\in \Phi (w)} (1-q^{-m (\alpha ) d
(\alpha)}\x^{-m (\alpha )\alpha})\cdot
\prod_{\substack{\alpha>0\\ \alpha \not \in \Phi (w)}}
(1-q^{m (\alpha ) d (\alpha)}\x^{m (\alpha ) \alpha})\Bigr).
\end{align*}
Multiplying the top and bottom of the last expression by
\[
\prod_{\alpha \in \Phi (w)} -q^{m (\alpha ) d (\alpha)}\x^{m (\alpha
)\alpha} = (-1)^{|\Phi (w)|} \prod_{\alpha \in \Phi (w)}
q^{m (\alpha ) d(\alpha)}\x^{m (\alpha )\alpha},
\]
converts
the denominator to $\Delta (\x)$.  After dividing we find
\[
j (w,\x) = (-1)^{|\Phi (w)|} \prod_{\alpha \in \Phi (w)}
q^{m (\alpha ) d(\alpha)}\x^{m (\alpha )\alpha}.
\]
This agrees with the statement, since $|\Phi (w)| = \length (w)$.  
\end{proof}

We list some further properties of the group action that we will
use.  

\begin{lemma}\label{lemma:actionproperties}
  Let $\ell$ be an $r$-tuple of nonnegative integers.
\begin{enumerate}
\item \label{statement1} Let $f\in \local $. Let $g (\x) \in
\local_\beta$ with $\sigma_i\beta-\beta=0\in\Lambda/\Lambda'.$ Then
\begin{equation}\label{eqn:simplecongruence}
(gf|_{\ell }\sigma_i) (\x ) =g(\sigma_{i}\x)( f|_{\ell }\sigma_i) (\x ). 
\end{equation}
Similarly, if $\beta\in \Lambda'$, then for all $w\in W$ 
\begin{equation}\label{eqn:congruence}
(gf|_{\ell }w) (\x ) =g(w \x)( f|_{\ell }w) (\x).
\end{equation}
\item  \label{statement2} The function $j(w,\x)(1|_\ell w)(\x)$ is regular at the origin.
\item  \label{statement3} Suppose $\length (\sigma_{i}w) = \length (w)+1$.  Then 
\[
j
(\sigma_{i}w,\x) (\x^{(l_{i}+1-m (\alpha_{i}))\alpha_{i}}|_{\ell} w) (\x)
\]
is regular at the origin.
\end{enumerate}
\end{lemma}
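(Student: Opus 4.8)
The plan is to prove a single statement general enough to contain both \eqref{statement2} and \eqref{statement3} while being stable under induction on $\length (w)$. For $v,w\in W$ with $\length (vw)=\length (v)+\length (w)$ and $\gamma \in \Lambda$, I would consider the function $j (vw,\x)\,(\x^{\gamma}|_{\ell}w)(\x)$ and attach to it the parameter $\eta = \gamma + \beta_{v}$, where $\beta_{v}=\sum_{\alpha \in \Phi (v)}m (\alpha)\alpha$ is the exponent produced by Lemma~\ref{lem:jlemma}. The claim is that this function is regular at the origin whenever $\eta$ lies in a suitable $\bullet$-stable region $R$ contained in the nonnegative cone and containing the finite orbit $W\bullet 0$. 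Statement~\eqref{statement2} is the case $v=e$, $\gamma =0$, so $\eta =0$; statement~\eqref{statement3} is the case $v=\sigma_{i}$, $\gamma = (l_{i}+1-m (\alpha_{i}))\alpha_{i}$, for which $\eta = (l_{i}+1)\alpha_{i}=\sigma_{i}\bullet 0\in W\bullet 0$. Crucially the induction runs on $\length (w)$ alone, with $v$ growing, so \eqref{statement2} and \eqref{statement3} appear merely as two initial cases and are not mutually recursive.

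The base case $\length (w)=0$ is immediate: by Lemma~\ref{lem:jlemma}, $j (v,\x)(\x^{\gamma}|_{\ell}e)=\sgn (v)q^{d (\beta_{v})}\x^{\eta}$ is a single monomial, regular at the origin exactly when $\eta \succeq 0$, and every $u\bullet 0 = \theta - u\theta$ is $\succeq 0$ because $\theta$ is strongly dominant. For the inductive step I would peel the leftmost generator, $w=\sigma_{i}w'$ with $\length (w')=\length (w)-1$, and expand $\x^{\gamma}|_{\ell}\sigma_{i}$ via Definition~\ref{def:wiaction} and \eqref{eqn:monomialaction}. Writing $m=m (\alpha_{i})$ and $\mu=\mu _{\ell ,i}(\gamma)=l_{i}+1-\innprod{\gamma}{\alpha_{i}^{\dual}}$, and factoring the lowest power of $x_{i}$ out of $\PP$ and of $\QQ$, this becomes $U_{\PP}\,\x^{\delta_{\PP}}+U_{\QQ}\,\x^{\delta_{\QQ}}$ with
\[
\delta_{\QQ}=\gamma +(\mu -m)\alpha_{i},\qquad \delta_{\PP}=\gamma + m\lfloor \mu/m\rfloor\,\alpha_{i},
\]
where $U_{\PP},U_{\QQ}\in \local_{0}$ have nonzero constant term, hence are units. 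Applying $|_{\ell}w'$ and invoking the congruence \eqref{eqn:congruence} of part~\eqref{statement1} (with $\beta =0\in \Lambda'$) pulls the units outside as $U_{\PP}((w'\x)_{i})$ and $U_{\QQ}((w'\x)_{i})$; these are regular because $(w')^{-1}\alpha_{i}\succ 0$ (the expression $\sigma_{i}w'$ being reduced), so the substitution is by a monomial with nonnegative exponents. Finally $j (vw,\x)=j ((v\sigma_{i})w',\x)$, and $v\sigma_{i}$ is again length-additive with $w'$ (the prefix $v\sigma_{i}$ of the reduced word is reduced), so each summand is an instance of the general statement for the strictly shorter element $w'$.

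The identity that makes everything fit together is
\[
\delta_{\QQ}+\beta_{v\sigma_{i}} = \sigma_{i}\bullet (\gamma +\beta_{v}) = \sigma_{i}\bullet \eta ,
\]
which follows from $\beta_{v\sigma_{i}}=\sigma_{i}\beta_{v}+m\,\alpha_{i}$ (a consequence of \eqref{eqn:phiwright}) together with the formula for $\mu$ above. In particular the $-m$ shift in $\delta_{\QQ}$, which records the worst power $x_{i}^{l_{i}+1-m}$ that $\QQ$ can introduce, is cancelled exactly by the $+m\,\alpha_{i}$ in $\beta_{v\sigma_{i}}$, so the $\QQ$-branch simply transports $\eta$ by the affine map $\eta \mapsto \sigma_{i}\bullet \eta$ and keeps it inside the finite orbit $W\bullet 0\subseteq\{\succeq 0\}$. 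The $\PP$-branch gives $\delta_{\PP}+\beta_{v\sigma_{i}}=\sigma_{i}\bullet \eta +(m-(\mu)_{m})\alpha_{i}$, which differs from the orbit point $\sigma_{i}\bullet\eta$ by a nonnegative multiple of $\alpha_{i}$.

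I expect the main obstacle to be controlling the region $R$ across the recursion, i.e.\ the positivity bookkeeping. The $\QQ$-branch is clean because $W\bullet 0$ is finite, $\bullet$-stable, and nonnegative; the difficulty is the $\PP$-branch, whose perturbation by a positive multiple of $\alpha_{i}$ can leave $W\bullet 0$, so one must check that these perturbations stay nonnegative under all subsequent $\bullet$-reflections. This rests on the strong dominance of $\theta$ (equivalently $\innprod{\theta}{\alpha_{i}^{\dual}}=l_{i}+1$) and on the non-positivity of the off-diagonal Cartan integers $c (i,j)\leq 0$, which together ensure that reflecting a nonnegative combination across an affine wall $\innprod{\eta}{\alpha_{i}^{\dual}}=l_{i}+1$ cannot create a negative simple-root coefficient. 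Pinning down the precise $R$ and verifying its stability under both branches is where essentially all the work lies; once that is in place, \eqref{statement2} and \eqref{statement3} follow as the two initial cases $\eta=0$ and $\eta=\sigma_{i}\bullet 0$.
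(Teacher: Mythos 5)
Your reduction skeleton is sound, and it is in essence a generalization of what the paper itself does: the paper also peels one generator at a time, uses part \eqref{statement1} to pull the unit factors of $\PP$ and $\QQ$ out through $|_{\ell}w'$, uses the positivity $(w')^{-1}\alpha_{i}>0$ coming from reducedness, and runs a mutual induction in which statement \eqref{statement2} for $\sigma_{i}w$ consumes statement \eqref{statement3} for $w$ (the monomial $x_{i}^{l_{i}+1-m}$ being exactly the lowest term of $\QQ$), together with the observation from Lemma \ref{lem:jlemma} that $j(\sigma_{i}w,\x)/j(w,\x)$ is a monomial with positive exponents. Your key identity $\delta_{\QQ}+\beta_{v\sigma_{i}}=\sigma_{i}\bullet\eta$ is correct, as is your base case. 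However, the step you defer --- ``pinning down the precise $R$ and verifying its stability'' --- is not a routine verification, and the mechanism you sketch for it is false. It is not true that $\bullet$-reflecting a nonnegative combination of simple roots across the affine wall cannot create a negative coefficient: already in rank one, $\sigma_{i}\bullet(k\alpha_{i})=(l_{i}+1-k)\alpha_{i}\prec 0$ as soon as $k>l_{i}+1$. Consequently no fixed region $R$ of the kind you posit exists: $R$ would have to contain the $\PP$-perturbed orbit point $\sigma_{i}\bullet 0+\alpha_{i}=(l_{i}+2)\alpha_{i}$, and $\bullet$-stability would then force $R\ni\sigma_{i}\bullet\bigl((l_{i}+2)\alpha_{i}\bigr)=-\alpha_{i}$, which is not $\succeq 0$. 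So the positivity bookkeeping, which you correctly identify as carrying essentially all the content, cannot be closed along the route you indicate.

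What saves the induction is not a $w$-independent region but reducedness of the word, so the invariant must depend on $w$: the correct hypothesis is $w^{-1}\bullet(\gamma+\beta_{v})\succeq 0$. This is stable under both branches: since $\bullet$ is a left action, the $\QQ$-branch gives $(w')^{-1}\bullet(\sigma_{i}\bullet\eta)=w^{-1}\bullet\eta$, unchanged; and the $\PP$-branch adds $c\,(w')^{-1}\alpha_{i}$ with $c>0$, which is a \emph{positive} root precisely because $\sigma_{i}w'$ is reduced --- the same fact you already invoke to control the units $U_{\PP}((w'\x)_{i})$, $U_{\QQ}((w'\x)_{i})$, but which you never apply to the transported perturbations, where it is actually needed. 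At the base case $w=e$ the hypothesis reads $\gamma+\beta_{v}\succeq 0$, which is exactly regularity of the surviving monomial; and statements \eqref{statement2}, \eqref{statement3} satisfy the hypothesis since $w^{-1}\bullet 0$ and $w^{-1}\bullet(\sigma_{i}\bullet 0)=(\sigma_{i}w)^{-1}\bullet 0$ both lie in $W\bullet 0\subseteq\{\lambda\succeq 0\}$. With this replacement your argument closes (and arguably makes explicit details the paper's own ``similar computations'' leave to the reader); as written, it has a genuine gap at exactly the point you flagged.
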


\begin{proof}
Since $W$ preserves the sublattice $\Lambda '$, equation
\eqref{eqn:congruence} is an easy consequence of
\eqref{eqn:simplecongruence}, so we prove
\eqref{eqn:simplecongruence}.  It suffices to prove
\eqref{eqn:simplecongruence} for $f (\x) \in \local_{\lambda}.$ Then
$gf$ is still in $\local_{\lambda}$ so \eqref{eqn:simplecongruence}
follows easily from Definition (\ref{wiaction}) of the $|_\ell$
action.  (The point is that $\PP _{\lambda,\ell ,i} (x_{i})$ and $\QQ
_{\sigma_{i}\bullet \lambda,\ell ,i} (x_{i})$ depend only on the
equivalence class of $\lambda$ in $\Lambda/\Lambda'.$)

Statements \eqref{statement2} and \eqref{statement3}
are proved simultaneously by induction.  We will show the details for
the proof of \eqref{statement2}; the proof of \eqref{statement3} is
similar.  

First of all, both statements are obviously true when $\length (w) =
0$.  Now assume $\sigma_{i}w$ satisfies $\length (\sigma_{i}w) =
\length (w)+1$, let $m = m (\alpha_{i})$, and assume that
\eqref{statement2}, \eqref{statement3} are true for $w$.  We prove
that $j (\sigma_{i}w,\x) (1|_{\ell}\sigma_{i} w) (\x)$ is regular.
Indeed, we have
\begin{equation}\label{eqn:prev}
j (\sigma_{i}w,\x) (1|_{\ell}\sigma_{i} w) (\x) = j (\sigma_{i}w,\x)
\bigl ( \bigl[\PP_{0,\ell,i} (x_{i}) + \QQ_{\sigma \bullet 0,\ell ,i}
(x_{i})\bigr]\bigr|_{\ell}w\bigr) (\x),
\end{equation}
and it suffices to check the regularity of the $\PP$ and $\QQ$ terms
separately in \eqref{eqn:prev}.

Now $\PP_{0,\ell,i} (x_{i}) \in \local_{0}$, so we can apply
\eqref{eqn:congruence} with $f=1$, $g=\PP_{0,\ell,i} (x_{i})$.  Thus,
abbreviating $P(\x ) = \PP_{0,\ell,i} (x_{i})$, we have
\[
(P \cdot 1|_{\ell}w) (\x) = P(w\x ) (1|_{\ell }w) (\x). 
\]
Up to irrelevant $q$-powers this expression can be written as
\begin{equation}\label{eqn:starr}
\frac{\x^{k\delta}}{1-q^{m-1}\x^{\delta}} (1|_{\ell}w) (\x),
\end{equation}
where $\delta = m w^{-1} (\alpha_{i})\in \Lambda$ and $k = (l_{i}+1-
(l_{i}+1)_{m})/m$ is nonnegative.  By the description of $\Phi
(\sigma_{i}w)$ in \eqref{eqn:phiwleft}, we have $w^{-1}
(\alpha_{i})>0$.  Therefore the rational function in \eqref{eqn:starr}
is regular at the origin.  By the induction hypothesis $j (w,\x)
(1|_{\ell}w) (\x)$ has no pole at the origin, and so we only need to
check that $j (\sigma_{i}w,\x) (1|_{\ell}w ) (\x)$ has no pole at the
origin.  Looking again at \eqref{eqn:phiwleft}, we find
\[
\sum_{\alpha \in \Phi (\sigma_{i}w)} m (\alpha)\alpha \succ
\sum_{\alpha\in \Phi (w)} m (\alpha)\alpha.
\]
Hence by Lemma \ref{lem:jlemma} the ratio $j (\sigma_{i}w,\x)/j
(w,\x)$ is a monomial of positive degree.  Therefore $j (\sigma_{i}w,\x)
(1|_{\ell}w) (\x)$ has no pole at the origin.  This completes the
analysis of the $\PP$ term.

We can treat the $\QQ$ term in \eqref{eqn:prev} by a similar argument.
We can apply \eqref{eqn:congruence} by taking $f=x_{i}^{l_{i}+1-m}$,
$g = \QQ_{\sigma_{i}\bullet 0, \ell , i} (x_{i})/f$.  Applying
$\phantom{a}|_{\ell}w$ to $gf$, we obtain a rational function with no
pole at the origin times
\[
(x_{i}^{l_{i}+1-m}|_{\ell}w) (\x).
\]
By induction, this becomes regular at the origin after multiplying by
$j (\sigma_{i}w, \x)$.  Therefore this case of \eqref{statement2}
follows from prior cases of \eqref{statement2}, \eqref{statement3}.
Similar computations work to show that \eqref{statement3} follows from
prior cases of \eqref{statement2}, \eqref{statement3}.
\end{proof}

%\begin{remark}
%We take this opportunity to point out some typos in \cite{qmds}.  In
%\cite[Lemma 3.9]{qmds}, inverses were omitted from the superscripts of
%the monomials in (a) and (b).  With the inverses, the proof of Lemma
%3.9 simplifies somewhat.  The rest of the results in \cite{qmds} are
%unchanged.  
%\end{remark}

Now define the rational function 
\begin{equation}\label{eqn:defofh}
h (\x ;\ell) = \Delta (\x)^{-1}\sum_{w\in W} j (w,\x) (1|_{\ell}w)
(\x) \in \local.
\end{equation}
The main result of this section is the following theorem:

\begin{theorem}\label{thm:invRatlFunc}  Let $\ell=(l_1, \ldots, l_r)$
  with each $l_i\geq 0.$ The rational function $h = h (\x ;\ell )$ satisfies $h|_\ell w=h$ for all $w\in W.$ 
  Furthermore
$N(\x;\ell)=h(\x;\ell)D(\x)$ is a polynomial in the $x_i$'s.
  Finally, if $\ell=(0,\ldots, 0)$ and
  $m=m (\alpha_{i})$,
  \begin{equation}
    \label{eq:21}
h(0,\ldots, 0, x_i, 0, \ldots, 0;\ell)=
\frac {1+\gamma(\norm {\alpha_i}^2) qx_i}{1-q^{m-1}x_i^m}.
  \end{equation}
\end{theorem}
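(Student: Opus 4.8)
The plan is to prove the three assertions in order, establishing $W$-invariance first since the other two rely on it. For $h|_\ell w=h$ it suffices to treat the generators $\sigma_i$, and the mechanism is a cocycle-averaging argument. Write $\Psi(\x)=\sum_{w\in W}j(w,\x)(1|_\ell w)(\x)$, so that $h=\Delta(\x)^{-1}\Psi$ by \eqref{eqn:defofh}. Both $\Delta(\x)^{-1}$ and each monomial $j(w,\x)$ lie in $\local_0$: the support of $\Delta$ lies in $\Lambda'$, and by Lemma~\ref{lem:jlemma} the exponent $\sum_{\alpha\in\Phi(w)}m(\alpha)\alpha$ of $j(w,\x)$ lies in $\Lambda'$. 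Hence \eqref{eqn:congruence} lets me pull these factors through the $|_\ell\sigma_i$ action. Using that $|_\ell$ is a genuine right action of $W$ (Theorem~\ref{thm:waction}), I obtain $\bigl(j(w,\cdot)(1|_\ell w)\bigr)|_\ell\sigma_i=j(w,\sigma_i\x)(1|_\ell w\sigma_i)(\x)$. The $1$-cocycle relation $j(w\sigma_i,\x)=j(w,\sigma_i\x)j(\sigma_i,\x)$ then gives $j(w,\sigma_i\x)=j(w\sigma_i,\x)/j(\sigma_i,\x)$; summing over $w$ and reindexing by $w'=w\sigma_i$ yields $\Psi|_\ell\sigma_i=\Psi/j(\sigma_i,\x)$. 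Finally, pulling $\Delta(\x)^{-1}$ through via \eqref{eqn:congruence} and using $j(\sigma_i,\x)=\Delta(\x)/\Delta(\sigma_i\x)$,
\[
h|_\ell\sigma_i=\Delta(\sigma_i\x)^{-1}\,\frac{\Psi}{j(\sigma_i,\x)}=\Delta(\sigma_i\x)^{-1}\,\frac{\Delta(\sigma_i\x)}{\Delta(\x)}\,\Psi=\Delta(\x)^{-1}\Psi=h .
\]

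For the polynomiality of $N=hD$ I would first prove regularity at the origin and then confine the poles. By Lemma~\ref{lemma:actionproperties}\eqref{statement2} each summand $j(w,\x)(1|_\ell w)(\x)$ is regular at $\x=0$, so $\Psi$ is; since $\Delta(0)=D(0)=1$ and $N=(D/\Delta)\Psi$, the function $N$ is regular at the origin. The heart of the matter is to show that the only poles of $h$ are simple and occur along the zero loci of the factors $1-q^{m(\alpha)d(\alpha)-1}\x^{m(\alpha)\alpha}$ of $D$. Here I would exploit the $W$-invariance just proved: the pole divisor of $h$ is stable under the change-of-variables action, so it suffices to examine one representative hyperplane per orbit. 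Tracking how the denominators $1-(qx_i)^m/q$ of the $\PP_{\beta,\ell,i}$ and $\QQ_{\beta,\ell,i}$ of \eqref{eqn:Peqn}--\eqref{eqn:Qeqn} transform under $\sigma_j$ via \eqref{eqn:monomialaction}, one checks that they turn into the factors $1-q^{m(\gamma)d(\gamma)-1}\x^{m(\gamma)\gamma}$ attached to roots $\gamma$; those with $\gamma>0$ are exactly the factors of $D$, while the poles nominally produced by $\Delta(\x)^{-1}$ and by the negative-root factors must cancel in the $W$-symmetrized sum. Granting this, $N$ has no poles and is therefore a Laurent polynomial; being in addition regular at the origin, it is an honest polynomial in $x_1,\dotsc,x_r$. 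I expect this pole-cancellation step to be the main obstacle, since it is precisely where the combinatorics of $\Phi$ and the one-unit difference between the $q$-powers in $\Delta$ and $D$ must conspire; checking it rigorously requires bookkeeping of the sort already carried out in Lemma~\ref{lemma:actionproperties}.

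For the closed form \eqref{eq:21} I would restrict \eqref{eqn:defofh} to the line $L_i=\{x_j=0:j\neq i\}$. By Lemma~\ref{lem:jlemma}, $j(w,\x)=\sgn(w)q^{d(\beta_w)}\x^{\beta_w}$ with $\beta_w=\sum_{\alpha\in\Phi(w)}m(\alpha)\alpha$, and $\beta_w$ is a multiple of $\alpha_i$ only when $\Phi(w)\subseteq\{\alpha_i\}$, i.e.\ $w\in\{e,\sigma_i\}$; since $j(w,\x)(1|_\ell w)(\x)$ is regular at the origin, for every other $w$ the monomial $j(w,\x)$ carries a positive power of some $x_j$ and the companion factor supplies no compensating pole along $L_i$, so the summand vanishes there. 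Only the $e$- and $\sigma_i$-terms remain. Taking $\ell=0$ one computes $\mu_{\ell,i}(0)=d\bigl((l_i+1)\alpha_i\bigr)=1$ and $\mu_{\ell,i}(\sigma_i\bullet0)=\mu_{\ell,i}(\alpha_i)=-1$, so \eqref{eqn:Peqn}--\eqref{eqn:Qeqn} specialize, on writing $x$ for $x_i$ and $A=q^{m-1}x^m$, to $\PP_{0,\ell,i}(x)=(1-1/q)/(1-A)$ and $\QQ_{\alpha_i,\ell,i}(x)=-\gamma(\norm{\alpha_i}^2)(qx)^{1-m}(1-qA)/(1-A)$, while $j(\sigma_i,\x)$ restricts to $-q^mx^m=-qA$ and $\Delta$ to $1-q^mx^m=1-qA$. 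Substituting and using the elementary identities $qA(qx)^{1-m}=qx$ and $1+(1-q)A/(1-A)=(1-qA)/(1-A)$,
\[
\Psi\big|_{L_i}=1+(-qA)\bigl(\PP_{0,\ell,i}(x)+\QQ_{\alpha_i,\ell,i}(x)\bigr)=\frac{(1-qA)\bigl(1+\gamma(\norm{\alpha_i}^2)qx\bigr)}{1-A}.
\]
Dividing by $\Delta=1-qA$ cancels the factor $1-qA$ and leaves $h=\bigl(1+\gamma(\norm{\alpha_i}^2)qx\bigr)/(1-q^{m-1}x^m)$, which is \eqref{eq:21}.
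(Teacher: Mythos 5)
Your first and third assertions are handled correctly, and essentially as the paper does: the invariance argument (rewrite $h$ as $\sum_{w}\Delta(w\x)^{-1}(1|_\ell w)(\x)$ using the cocycle property, pull the $\Lambda'$-supported factors through with Lemma~\ref{lemma:actionproperties}~\eqref{statement1}, and reindex the sum) is the paper's proof verbatim, and your computation of \eqref{eq:21} reduces to the terms $w\in\{1,\sigma_i\}$ and simplifies exactly as the paper does. The problem is the middle assertion --- polynomiality of $N=hD$ --- where your proposal has a genuine gap that you flag yourself (``Granting this\dots''). What you grant is not routine bookkeeping; it is the core of the proof of this theorem.

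Two specific points. First, your reduction ``the pole divisor of $h$ is stable under the change-of-variables action, so it suffices to examine one hyperplane per orbit'' does not follow from $h|_\ell w=h$: the $|_\ell$-action is a change of variables \emph{followed by multiplication by} $\PP+\QQ$, and those factors have zeros and poles along precisely the hyperplanes $1-q^{m-1}x_i^m=0$ in question, so invariance of $h$ under $|_\ell$ does not directly give stability of its polar locus under $\x\mapsto w\x$. Second, the cancellation of the poles along $\Delta=0$ is asserted, never proved. The paper establishes both points concretely: (i) an induction on $\length(w)$ shows that $P_w(\x)=j(w,\x)(1|_\ell w)(\x)D_w(\x)$ is a polynomial, where $D_w(\x)=\prod_{\alpha\in\Phi(w)}(1-q^{m(\alpha)d(\alpha)-1}\x^{m(\alpha)\alpha})$; the induction step uses \eqref{eqn:phiwright}, Lemma~\ref{lemma:actionproperties}~\eqref{statement1} to commute $D_w$ past $\sigma_i$, and the regularity statement Lemma~\ref{lemma:actionproperties}~\eqref{statement2}. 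This confines all denominators to factors of $D$, i.e.\ shows $N\Delta$ is a polynomial. (ii) Divisibility of $N\Delta$ by each factor of $\Delta$ is then proved by an explicit computation: for a monomial $B_\lambda=\x^{\lambda}$ with $\sigma_i\lambda\succ\lambda$, the combination $B_\lambda+j(\sigma_i,\x)(B_\lambda|_\ell\sigma_i)$ is a rational function with denominator $1-q^{m-1}x_i^m$ and numerator divisible by $1-q^m x_i^m$; this is applied to the inner sums of a decomposition of $W$ into cosets of $\{1,\sigma_i\}$, giving divisibility by the simple-root factors of $\Delta$, and the factors attached to non-simple positive roots $\alpha$ are obtained by transporting via an element $w$ with $w\alpha_i=\alpha$ --- the rigorous version of your orbit idea, made legitimate by Lemma~\ref{lemma:actionproperties}~\eqref{statement1} together with a re-run of the induction in (i) to see that $D(\x)\cdot(h_0|w)$ stays polynomial. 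Until you supply (i) and (ii), the polynomiality of $N$ remains unproved.
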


\begin{proof}
The invariance of $h$ is easy to see.  Indeed, we have 
\[
h (\x ;\ell) = \sum_{w\in W} \frac{1}{\Delta (w\x)} (1|_{\ell}w) (\x).
\]
For any $u\in W$, after applying $\phantom{a}|_{\ell}u$ and using Lemma
\ref{lemma:actionproperties} \eqref{statement1}, we see that
\[
(h|_{\ell}u) (\x) = \sum_{w\in W} \frac{1}{\Delta (wu\x)}(1|_{\ell}wu) (\x).
\]
Thus $h$ is invariant under the $|_{\ell}$-action.

To prove that $N(\x;\ell)$ is a polynomial, introduce for each $w\in
W$ the rational function
$$P_w(\x)=j(w, \x)(1|_\ell w) D_w(\x),$$
where
$$D_w(\x)=\prod_{\alpha \in \Phi (w)} (1-q^{m (\alpha)d (\alpha)-1}
\x^{m(\alpha)\alpha}).
$$
We will show by induction on the length of $w$ that each $P_w(\x)$ is
a polynomial.  If $w$ is the identity there is nothing to prove. 

Suppose that for $w\in W$, the rational function $P_w(\x)$ is a
polynomial.  Let $\sigma_i$ be a simple reflection such that
$\length (w\sigma_i)=\length (w) +1.$ Then
\begin{eqnarray}\nonumber
P_{w\sigma_i}(\x)&=&
j(w\sigma_i,\x)(1|_\ell w\sigma_i)(\x)D_{w\sigma_i}(\x)\\
\nonumber
&=& j(\sigma_i,\x)\left(\left.
\frac {P_w}{D_w}\right|_\ell \sigma_i\right)(\x)\cdot
D_{w\sigma_i}(\x)  \\ 
\label{eqn:jpd}
&=& \frac{ j(\sigma_i,\x)(P_w|_\ell \sigma_i)(\x)}
{D_w(\sigma_i\x)}\cdot
D_{w\sigma_i}(\x), \text{\ \ by Lemma
  \ref{lemma:actionproperties} (\ref{statement1}) }. 
\end{eqnarray}

By the definition \eqref{wiaction} of the action of $\sigma_i,$
we can write $(P_w|_\ell\sigma_i)(\x)$ as
$P_w'(\x)/(1-q^{m(\alpha_i)-1}x_i^{m(\alpha_i)})$ where $P'_w$ is a
Laurent polynomial in the $x_i.$ However, as Lemma
\ref{lemma:actionproperties} (\ref{statement2}) implies that $P_w(\x)$ is regular at
the origin, it follows that $P_w'$ is a polynomial.  Moreover, the
denominator $D_w(\sigma_i\x)$ is equal to
\begin{equation}\label{eqn:fred}
\prod_{\substack{\alpha\in\Phi (w)}} 
(1-q^{m(\alpha)d(\sigma_i\alpha)-1}\x^{m(\alpha)\sigma_i\alpha})=
D_{w\sigma_i}(\x)/(1-q^{m(\alpha_i)-1}x_i^{m(\alpha_i)}),
\end{equation}
where here we use \eqref{eqn:phiwright} to compute $D_{w\sigma_{i}} (\x )$.
Note also that $m(\alpha)=m(w\alpha)$ for all $w\in W, \alpha\in\Phi$,
since the Weyl group preserves root lengths.
Plugging \eqref{eqn:fred} back into \eqref{eqn:jpd}, we conclude that
$$j(w\sigma_i,\x)(1|_\ell w\sigma_i)(\x)D_{w\sigma_i}(\x)= j(\sigma_i,
\x)P_w'(\x)$$ is polynomial.  Therefore
$h(\x; \ell )D(\x)\Delta(\x)=N(\x;\ell)\Delta(\x)$ is a polynomial.

To complete the proof that $N(\x;\ell )$ is a polynomial, we check
that
\begin{equation}\label{eqn:NtimesDelta}
 N (\x;\ell )\Delta (\x) = D (\x)\sum_{w\in W} j(w,\x) (1|_\ell w)(\x)
\end{equation}
is divisible by $\Delta(\x)$.  We begin with the following simple
computation.  Let $B_{\lambda} (\x) = \x^{\lambda}$ be a monomial with
nonnegative exponents.  Let $\sigma_{i}$ be a simple reflection
and let $m=m (\alpha_{i})$.  Assume $\sigma_{i}\lambda
\succ \lambda$.  Then we claim
\begin{equation}\label{eqn:Beqn}
B_{\lambda} (\x) +j (\sigma_{i},\x )(B_{\lambda}|_{\ell}\sigma_{i}
)(\x) 
\end{equation}
can be written as a rational function with numerator
divisible by $1-q^{m}x_{i}^{m}$ and with denominator $1-q^{m-1}x_{i}^{m}$.
Indeed, we have that \eqref{eqn:Beqn} equals
\begin{multline*}
\x^{\lambda}- (qx_{i})^{m}q^{d
(\sigma_{i}\lambda-\lambda)}\x^{\sigma_{i} \lambda} (\PP _{\lambda,\ell ,i} (x_{i}) + \QQ_{\sigma_{i}\bullet \lambda,\ell,i} (x_{i}))=\\
\x^{\lambda} \bigl((1-q^{m-1}x_{i}^{m}) - q^{d (\sigma_{i}\lambda
-\lambda)}\x^{\sigma_{i} \lambda -\lambda}(X + Y)\bigr),
\end{multline*}
where 
\begin{align*}
X &= (qx_{i})^{l_{i}+1+m- (\mu)_{m}} (1-1/q)/ (1-q^{m-1}x_{i}^{m}),\\
Y &= \gamma (qx_{i})^{l_{i}+1} (1-q^{m}x_{i}^{m})/ (1-q^{m-1}x_{i}^{m}),\quad
\gamma \in \C,\\
\mu &= \mu_{\ell ,i} (\sigma_{i}\bullet \lambda).
\end{align*}
It is clear that the $Y$ term has the correct denominator and is
divisible by $1-q^{m}x_{i}^{m}$.  For the $X$ term, after bringing it
together with the initial term we find an expression of the form 
\[
(1-q^{m-1}x_{i}^{m} + q^{km-1}x_{i}^{km} - (qx_{i})^{km})/ (1-q^{m-1}x_{i}^{m}), \quad k\geq 1,
\]
which is a rational function with the required denominator and with
numerator divisible by $1-q^{m}x_{i}^{m}$.

We claim this computation shows that $N(\x;\ell )\Delta (\x)$ is divisible
by $1-q^{m}x_{i}^{m}$.  Indeed, to see this we use the cocycle property of
$j$ to write \eqref{eqn:NtimesDelta} as a double sum, with the inner
sum over a set of minimal length representatives for the right cosets
of $\sigma_{i}$ in $W$, and with the outer sum running over
$1,\sigma_{i}$.  After multiplying by $D (\x)$ the inner sum becomes a
polynomial, and then the computation above shows that after applying the
operator $1+j
(\sigma_{i},\x ) (\phantom{a}|_{\ell}\sigma_{i})$ we obtain a rational
function with denominator killed by $D (\x)$ and with numerator
divisible by $1-q^{m}x_{i}^{m}$.  This also implies
\eqref{eqn:NtimesDelta} is divisible by $1-q^{m (\alpha)}\x^{m
(\alpha)\alpha}$ for any simple root $\alpha$.

We can now show that $1-q^{d(\alpha)m(\alpha)}\x^{m(\alpha)\alpha} $
divides $h (\x ; \ell ) \Delta (\x)$ for all positive roots $\alpha$. Indeed, write
\begin{equation}
   \label{eq:27}
h (\x ; \ell ) \Delta (\x)=(1-q^{m(\alpha_i)}x_i^{m(\alpha_i)})h_{0}(\x),
\end{equation}
say, where $D(\x)h_{0}(\x)$ is a polynomial and the simple root
$\alpha_i$ has the same length as $\alpha.$ Let $w\in W$ map
$\alpha_i$ to $\alpha$. Act on both sides of (\ref{eq:27}) by
$w$. Then $ \pm h(\x;\ell )\Delta (\x )
=(1-q^{d(\alpha)m(\alpha)}\x^{m(\alpha)\alpha} ) (h_{0}|w)$, by Lemma
\ref{lemma:actionproperties} (\ref{statement1}).  But, arguing
inductively as in the first part of the proof of the theorem, $D (\x
)\cdot(h_{0}|w)$ is still polynomial.  We conclude that $h (\x ;
\ell) D (\x)\Delta (\x)$ is divisible by
$1-q^{d(\alpha)m(\alpha)}\x^{m(\alpha)\alpha} $ for every positive
root $\alpha.$ This completes the proof of the polynomiality of $N
(\x;\ell )$.

Now we prove \eqref{eq:21}.  Let $\ell = (0,\dotsc ,0)$ and set all
variables of $h$ equal to zero except for $x_{i}$ in
\eqref{eqn:defofh}.  The sum over $W$ reduces to two terms, namely
$w\in \{1,\sigma_{i} \}$, since all other terms are easily seen to
vanish by Lemma \ref{lem:jlemma}.  Let $m = m (\alpha_{i})$.
Then $\Delta (\x)$ becomes $1- (qx_{i})^{m}$, since this is the only
factor of $\Delta$ involving a monomial in $x_{i}$ alone.  Combining
\eqref{wiaction} and \eqref{eqn:defofh} we find
\begin{equation}\label{eqn:blurgh}
\Delta (\x)^{-1}\bigl(1+j (\sigma_{i} ,\x) (1|_{\ell}\sigma_{i}) (\x)\bigr) =
(1- (qx_{i})^{m})^{-1} (\PP_{0,\ell ,i} + \QQ_{\sigma_{i} \bullet 0, \ell ,i}).
\end{equation}
We have $j (\sigma_{i} ,\x) = - (qx_{i})^{m}$ and $\mu_{\ell ,i} (0) =
-\mu_{\ell ,i} (\alpha_{i})=1$.  Thus the right of \eqref{eqn:blurgh}
becomes
\[
\frac{1}{1- (qx_{i})^{m}}\left(1+\frac{- (qx_{i})^{m} (1-1/q)+(qx_{i}) (1-
(qx_{i})^{m})\gamma (\norm{\alpha_{i}}^{2})}{1- (qx_{i})^{m}/q} \right),
\]
which after a short computation is easily seen to be the right of
\eqref{eq:21}.  This completes the proof of the theorem.
\end{proof}

Write $N (\x ;\ell) = \sum_{\lambda \in \Lambda}
a_{\lambda}\x^{\lambda}$.  Given any $\beta \in \Lambda$ and a simple
root $\alpha_{i}$, we define 
\[
S_{\beta} = S_{\beta , i}= \{\beta +km\alpha_{i}\mid k\in \Z \},
\]
where $m=m (\alpha_{i})$.  Define 
\[
N_{\beta ,i} (\x) = \sum_{\lambda \in S_{\beta}}
a_{\lambda}\x^{\lambda}.
\]

Now choose $\beta \in \Lambda$
 and assume $\sigma_{i}\bullet \beta = \beta + k\alpha_{i}$ with $k\geq
0$.  Define $\delta = (k)_{m}.$ 
%and for any integer $a$ abbreviate $g^{*} (1,\p;\epsilon^{\norm{\alpha_{i}}^{2}a})$ by $g^{*} (a)$.  
Define
\[
f_{\beta ,i} (\x) = \begin{cases}
(N_{\beta ,i} (\x) - \gamma(-\delta) (qx_{i})^{m-\delta}
N_{\sigma_{i}\bullet \beta , i} (\x))/ (1-q^{m-1}x_{i}^{m})&\text{if $\delta \not =0$,}\\
N_{\beta ,i} (\x)/(1-q^{m-1}x_{i}^{m})&\text{otherwise}.
\end{cases}
\]
The function $f_{\beta ,i} (\x)$ satisfies the following symmetry
with respect to the reflection $\sigma_{i}$:

\begin{theorem}\label{thm:f.rootform}
We have 
\[
\frac{f_{\beta ,i} (\x)}{f_{\beta ,i} (\sigma_{i}\x)} = \begin{cases}
(qx_{i})^{l_{i}+1-\delta}&\text{if $\delta \not =0$,}\\
(qx_{i})^{l_{i}+1-m}&\text{otherwise.}
\end{cases}
\]
\end{theorem}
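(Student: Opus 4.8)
The idea is that the symmetry of $f_{\beta,i}$ is a shadow of the full $W$-invariance of $h$ from Theorem \ref{thm:invRatlFunc}, read off on the two slices $N_{\beta,i}$ and $N_{\sigma_i\bullet\beta,i}$ that build $f_{\beta,i}$. First I would convert the invariance $h|_\ell\sigma_i=h$ into a functional equation for $N=hD$. Since $D\in\local_0$, equation \eqref{eqn:congruence} lets me pull $D$ through the action, giving $(N|_\ell\sigma_i)(\x)=\frac{D(\sigma_i\x)}{D(\x)}N(\x)$. I would then compute the scalar $\frac{D(\sigma_i\x)}{D(\x)}$: because $\sigma_i$ permutes $\Phi^+\smallsetminus\{\alpha_i\}$ and $m(\cdot)$ is $W$-invariant, all factors of $D$ except the one indexed by $\alpha_i$ cancel in the ratio (unlike in Lemma \ref{lem:jlemma}, the $-1$ in the exponent of $D$ keeps the survivor from telescoping to a monomial), leaving $\frac{D(\sigma_i\x)}{D(\x)}=\frac{1-q^{-m-1}x_i^{-m}}{1-q^{m-1}x_i^m}$ with $m=m(\alpha_i)$.

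Next I would project this equation through the decomposition \eqref{eqn:decomposition} and refine it to the level of slices. Writing $u=N_{\beta,i}$ and $v=N_{\sigma_i\bullet\beta,i}$, I would use \eqref{eqn:monomialaction} and the fact that each $\PP$ and $\QQ$ shifts the $\alpha_i$-exponent only within a fixed class mod $m$ to show that $\sigma_i$ followed by multiplication by $\PP_\beta$ or $\QQ_\beta$ carries both $S_{\beta,i}$ and $S_{\sigma_i\bullet\beta,i}$ back into $S_{\beta,i}$, and that no other slice of the $\local_\beta$- or $\local_{\sigma_i\bullet\beta}$-components contributes there. Projecting the functional equation onto $S_{\beta,i}$ and onto $S_{\sigma_i\bullet\beta,i}$ then yields the closed system
\[
\PP_{\beta}\,u(\sigma_i\x)+\QQ_{\beta}\,v(\sigma_i\x)=\tfrac{D(\sigma_i\x)}{D(\x)}\,u(\x),\qquad
\QQ_{\sigma_i\bullet\beta}\,u(\sigma_i\x)+\PP_{\sigma_i\bullet\beta}\,v(\sigma_i\x)=\tfrac{D(\sigma_i\x)}{D(\x)}\,v(\x).
\]

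Solving this system for $u(\x),v(\x)$ and substituting into the definition of $f_{\beta,i}$, the factor $1-q^{m-1}x_i^m$ cancels and both $f_{\beta,i}(\x)$ and $f_{\beta,i}(\sigma_i\x)$ acquire the common prefactor $(1-q^{-m-1}x_i^{-m})^{-1}$, which drops out of the ratio. Since for $\delta\neq0$ the images $u(\sigma_i\x)$ and $v(\sigma_i\x)$ are supported on distinct slices, the claim $f_{\beta,i}(\x)/f_{\beta,i}(\sigma_i\x)=(qx_i)^{l_i+1-\delta}$ becomes the pair of identities
\[
\PP_{\beta}-\gamma(-\delta)(qx_i)^{m-\delta}\QQ_{\sigma_i\bullet\beta}=(qx_i)^{l_i+1-\delta},\qquad
\QQ_{\beta}-\gamma(-\delta)(qx_i)^{m-\delta}\PP_{\sigma_i\bullet\beta}=-\gamma(-\delta)(qx_i)^{l_i+1-m}.
\]
I would verify these directly from \eqref{eqn:Peqn} and \eqref{eqn:Qeqn}, using $\mu_{\ell,i}(\beta)=k$, $\mu_{\ell,i}(\sigma_i\bullet\beta)=-k$, the elementary identity $(-k)_m=m-\delta$, and—crucially—the relation $\gamma(a)\gamma(-a)=1/q$ at the index $\norm{\alpha_i}^2 k$ that occurs in $\QQ_{\sigma_i\bullet\beta}$; matching the $x_i^0$- and $x_i^m$-coefficients then collapses each left-hand side to the asserted monomial. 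The $\delta=0$ branch runs the same way but is cleaner: there $S_{\sigma_i\bullet\beta,i}=S_{\beta,i}$, the system degenerates to one equation, and $\gamma(-\norm{\alpha_i}^2 k)=\gamma(0)=-1$ reduces everything to the single identity $\PP_\beta+\QQ_\beta=(qx_i)^{l_i+1-m}$, which I have checked collapses exactly.

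The main obstacle is the passage to the $2\times2$ system: the $|_\ell$-action a priori mixes the entire $\local_\beta$- and $\local_{\sigma_i\bullet\beta}$-components, so the real work is proving that only the two slices $S_{\beta,i}$ and $S_{\sigma_i\bullet\beta,i}$ couple, which is precisely what makes $f_{\beta,i}$ the combination that diagonalizes the symmetry. The second delicate point lies in the index arithmetic of the final identities: reconciling the $\gamma$-index $\norm{\alpha_i}^2 k$ from \eqref{eqn:Qeqn} with the $\gamma(-\delta)$ in the definition of $f_{\beta,i}$ (via the congruence $\norm{\alpha_i}^2 k\equiv\norm{\alpha_i}^2\delta\bmod n$ and the hypothesis $\delta\neq0$) is what forces the numerators to cancel against $1-q^{m-1}x_i^m$.
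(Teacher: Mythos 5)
Your proposal is correct and is essentially the paper's own argument: the paper packages your $2\times 2$ slice system as the statement that $F_{\beta,i}=(N_{\beta,i}+N_{\sigma_i\bullet\beta,i})/(1-q^{m-1}x_i^m)$ is invariant under $\phantom{a}|_{\ell}\sigma_i$ (justified, as you do, by the construction of $h$ and Lemma \ref{lemma:actionproperties}), and then compares the twisted forms of $F_{\beta,i}$ exactly as you compare the two expressions for $f_{\beta,i}$. Your final pair of identities, $\PP_{\beta}-\gamma(-\delta)(qx_i)^{m-\delta}\QQ_{\sigma_i\bullet\beta}=(qx_i)^{l_i+1-\delta}$ and $\QQ_{\beta}-\gamma(-\delta)(qx_i)^{m-\delta}\PP_{\sigma_i\bullet\beta}=-\gamma(-\delta)(qx_i)^{l_i+1-m}$, is precisely the paper's closing computation (its two ``either of'' expressions), including the same use of $\gamma(a)\gamma(-a)=1/q$ and the index congruence modulo $n$.
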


\begin{proof}
We prove the statement when $\delta \not =0$; the remaining case is
simpler and requires no new ideas.

We begin by defining 
\begin{align}
F_{\beta ,i} (\x) &= (N_{\beta ,i} (\x) + N_{\sigma_{i}\bullet \beta ,i}
(\x))/ (1-q^{m-1}x_{i}^{m})\\
 &= \Bigl(\sum_{\lambda \in
S_{\beta}}a_{\lambda}\x^{\lambda} + \sum_{\mu \in S_{\sigma_{i}\bullet \beta}}a_{\mu}\x^{\mu}\Bigr)/ (1-q^{m-1}x_{i}^{m}).\label{eqn:eqa}
\end{align}
By the construction of $h$ and Lemma~\ref{lemma:actionproperties},
$F_{\beta,i} (\x)$ is invariant under
$\phantom{a}|_{\ell}\sigma_{i}$.  On the other hand, explicitly
applying $\sigma_{i}$ to $F$ yields
\begin{multline}\label{eqn:eqc}
(F_{\beta ,i} |_{\ell}\sigma_{i}) (\x) = \Bigl(\sum_{\lambda \in
S_{\beta}} a_{\lambda}B_{\lambda} (\sigma_{i}\x) (\PP_{\beta} +
\QQ_{\sigma_{i}\bullet \beta}) \\
+ \sum_{\mu \in S_{\sigma_{i}\bullet \beta}}a_{\mu} B_{\mu}
(\sigma_{i}\x) (\PP_{\sigma_{i}\bullet \beta} + \QQ_{\beta})\Bigr)/ (1-q^{-m-1}x_{i}^{-m}),
\end{multline}
where we have written $B_{\lambda} (\x) = \x^{\lambda}$ and have
eliminated the $\ell ,i$ from the subscripts to $\PP ,\QQ$ to lighten
the notation.  

Now going from $F_{\beta ,i}$ to $f_{\beta ,i}$ is
achieved by multiplying the terms in $F_{\beta ,i}$ in $S_{\sigma_{i}\bullet
\beta}$ by $-\gamma (-\delta) (px_{i})^{m-\delta}$.  Thus from \eqref{eqn:eqa} we have 
\[
f_{\beta ,i} (\x) = \Bigl(\sum_{\lambda \in
S_{\beta}}a_{\lambda}\x^{\lambda} -\gamma (-\delta) (qx_{i})^{m-\delta}\sum_{\mu \in S_{\sigma_{i}\bullet \beta}}a_{\mu}\x^{\mu}\Bigr)/ (1-q^{m-1}x_{i}^{m}),
\]
which implies 
\begin{multline}\label{eqn:eqb}
f_{\beta,i} (\sigma_{i}\x) \\
= \Bigl(\sum_{\lambda \in
S_{\beta}}a_{\lambda}B_{\lambda }(\sigma_{i}\x )-\gamma (-\delta) (qx_{i})^{\delta-m}\sum_{\mu \in S_{\sigma_{i}\bullet \beta}}a_{\mu}B_{\mu} (\sigma_{i}\x )\Bigr)/ (1-q^{-m-1}x_{i}^{-m}).
\end{multline}
But \eqref{eqn:eqc} also equals $F_{\beta ,i} (\x)$, and there the terms in
$S_{\beta}$ (respectively, $S_{\sigma_{i}\bullet\beta }$) are those that
are multiplied by $\PP_{\beta}, \QQ_{\beta}$ (resp.,
$\PP_{\sigma_{i}\bullet \beta}, \QQ_{\sigma_{i}\bullet \beta}$).  Hence 
\begin{multline}\label{eqn:eqd}
f_{\beta ,i} (\x) = \Bigl(\sum_{\lambda \in
S_{\beta}} a_{\lambda}B_{\lambda} (\sigma_{i}\x) \bigl(\PP_{\beta} -\gamma (-\delta) (qx_{i})^{m-\delta }\QQ_{\sigma_{i}\bullet \beta}\bigr) \\
+ \sum_{\mu \in S_{\sigma_{i}\bullet \beta}}a_{\mu} B_{\mu}
(\sigma_{i}\x) \bigl(-\gamma(-\delta) (qx_{i})^{m-\delta}\PP_{\sigma_{i}\bullet \beta} + \QQ_{\beta}\bigr)\Bigr)/ (1-q^{-m-1}x_{i}^{-m})
\end{multline}
Comparing \eqref{eqn:eqb} and \eqref{eqn:eqd}, we see that $f_{\beta
,i} (\x)/f_{\beta,i } (\sigma_{i}\x )$ equals either of
\[
\PP_{\beta}-\gamma(-\delta) (qx_{i})^{m-\delta}\QQ_{\sigma_{i}\bullet
\beta}, \quad (qx_{i})^{2 (m-\delta)}\PP_{\sigma_{i}\bullet \beta} - \QQ_{\beta} (qx_{i})^{m-\delta } / \gamma(-\delta ),
\]
both of which equal $(qx_{i})^{l_{i}+1-\delta}$.  This completes the
proof of the theorem.
\end{proof}

\section{The coefficients $H (\mathbf{c};\mathbf{m} )$}
\label{section:new}

In this section we explain how $N (\x ;\ell)$ will be used
to construct the factor $H (\mathbf{c};\mathbf{m} )$ that will later
be used to define our multiple Dirichlet series \eqref{eq:wmddef}.
Henceforth, the constants $\gamma(i)$ used in the definition of the
group action in the last section will be specialized to be the
modified Gauss sums
\begin{equation}\label{eq:gstar}
  \gamma(i) = \begin{cases}
    g(1,\p; \epsilon^k)/q & \text{if $k$ is not congruent to 0 mod $n$, }\\
    -1 & \text {otherwise.}
\end{cases}
\end{equation}

Denote the $x_1^{\beta_1}\cdots x_r^{\beta_r}$ coefficient of 
the polynomial $N(\x;\ell)$  by 
\begin{equation}\label{eqn:basicH}
H(\p^{\beta_1},
\ldots, \p^{\beta_r}; \p^{l_1}, \ldots ,\p^{l_r}).
\end{equation}
To
complete the definition of $H$ we impose a \emph{twisted
multiplicativity} property on its coefficients.
For fixed $(c_1\cdots c_r, c_1'\cdots c_r')=1$, we put
\begin{equation}
  \label{eq:3}
H(c_1c_1',\ldots, c_r c_r';{\bf m})=\xi ({\bf c}, {\bf c'})
H(c_1,\ldots, c_r;{\bf m} )   H(c_1',\ldots, c_r';{\bf m}),
\end{equation}
where
\begin{equation}
  \label{eq:4}
  \xi ({\bf c}, {\bf c'})=\prod_{i=1}^r  
  \prs{c_i}{c_i'}^{\norm{\alpha_i}^2} 
  \prs{c_i'}{c_i}^{\norm{\alpha_i}^2}
  \prod_{i<j} \prs{c_i}{c_j'}^{2\langle \alpha_i, \alpha_j\rangle} 
  \prod_{i<j} \prs{c_i'}{c_j}^{2\langle \alpha_i, \alpha_j\rangle} .
\end{equation}
We also have the relation
\begin{equation}
  \label{eq:5}
  H(c_1,\ldots , c_r;m_1m_1',\ldots, m_rm_r' )=
  \prod_{j=1}^r \prs{m_j'}{c_j}^{-\norm{\alpha_j}^2}  
  H(c_1,\ldots , c_r;m_1,\ldots, m_r )
\end{equation}
if $(c_1\cdots c_r, m_1'\cdots m_r')=1.$ Hence, using properties
\eqref{eq:3} and \eqref{eq:5} with \eqref{eqn:basicH}, we define
$H(\mathbf{c} ;\bfm)$ for any $r$-tuples of integers $\mathbf{c}, \bfm$ in
$\OO_S$.

For later arguments it is convenient to state
Theorem~\ref{thm:f.rootform} in different notation.
Given an $r$-tuple of nonnegative integers ${\bf k}=(k_1, \ldots,
k_r)$, define one variable polynomials 
\begin{multline}\label{eq:24}
  N^{(\p;\bf k)}(x;\bfm,\alpha_i)\\
=\sum_{j\geq 0}
H(\p^{k_1}, \ldots, \p^{k_{i-1}}, \p^{j m+(k_i)_m}, \p^{k_{i+1}},
\ldots, \p^{k_r};\bfm) x^{j m+(k_i)_{m}},
\end{multline}
where $m = m (\alpha_{i})$.
For each component $m_{i}$ of $\bfm$, let $l_i=\ord_\p m_i.$ Define 
\begin{equation}
  \label{eq:25}\begin{split}
   f^{(\p;\bf k)}(x;\bfm,\alpha_i)&=\frac{N^{(\p;\bf k)}
     (x;\bfm, \alpha_i)} {1-|\p|^{m-1}x^{m}} \\ -\delta^m_{k_i,k_i'}
g(m_i^{-1}\p^{l_i}&,\p;\epsilon^{\norm{\alpha_i}^2 (k_i-k_i')})
q^{(k_i-k_i'-1)_m}
x^{ (k_i-k_i')_m}\frac{N^{(\p;\bf k')}(x;\bfm,\alpha_i)}
{1-|\p|^{m-1}x^{m}},
\end{split}\end{equation}
where ${\bf k'}$ is defined by
$${\bf k'}=\bigl(k_1,\ldots, k_{i-1},-k_{i}+l_i+1-\sum_{j\neq i} k_jc
(j,i), k_{i+1},\ldots, k_r\bigr), $$ and $\delta^m_{i,j}$ is 0 if
$i\equiv j\pmod{m}$ and 1 otherwise. Note that if $\beta \in \Lambda$
is written as $\beta = \sum k_{i}\alpha_{i}$, where the $\alpha_{i}$
are the simple roots, then $\sigma_{i}\bullet \beta$ is $\sum
k'_{i}\alpha_{i}$.  Moreover, if $\mathbf{m}$ consists of pure
$\p$-powers, then the polynomial $N^{(\p; \mathbf{k})}$ (respectively
$f^{(\p; \mathbf{k})}$) coincides with $N_{\beta ,i}$ (resp.,
$f_{\beta ,i}$) from the end of Section \ref{sec:Waction} after
setting $x_{j}=1$ for all $j\not =i$.  Then a mild generalization of
Theorem~\ref{thm:f.rootform} yields the following:
\begin{theorem}
  \label{thm:f}
We have
$$ \frac{f^{(\p;\bf k)}(x;\bfm,\alpha_i)}{f^{(\p;\bf k)}(1/(q^2x);\bfm,\alpha_i)}=\begin{cases}
(qx)^{l_{i}+1-(k_i'-k_i)_m}&\text{if $(k_{i}'-k_{i})_{m}\not =0$,}\\
(qx)^{l_{i}+1-m}&\text{otherwise.}\\
\end{cases}
. $$
\end{theorem}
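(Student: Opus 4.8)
The plan is to obtain Theorem \ref{thm:f} as a one-variable shadow of the multivariable symmetry already established in Theorem \ref{thm:f.rootform}, the only genuinely new points being the passage from the full reflection $\sigma_i$ to the single substitution $x\mapsto 1/(q^2x)$, and the promotion of the constant $\gamma(-\delta)$ to an honest Gauss sum when $\bfm$ is not a pure power of $\p$. First I would fix notation compatible with Section \ref{sec:Waction}: writing $\beta=\sum_s k_s\alpha_s$, the element $\sigma_i\bullet\beta$ has simple-root coordinates $k_s'$ (the entries of $\mathbf{k}'$), and I set $m=m(\alpha_i)$ and $\delta=(k_i'-k_i)_m$, so that the two cases of the theorem are exactly $\delta\neq 0$ and $\delta=0$. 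As noted before the statement, when $\bfm$ is a pure $\p$-power the polynomials $N^{(\p;\bf k)}(x)$ and $N^{(\p;\bf k')}(x)$ are the restrictions of $N_{\beta,i}(\x)$ and $N_{\sigma_i\bullet\beta,i}(\x)$ to $x_s=1$ ($s\neq i$); here one checks, using \eqref{eq:gstar} and the Gauss sum properties recalled in Section \ref{sec:preliminaries}, that the factor $g(m_i^{-1}\p^{l_i},\p;\epsilon^{\norm{\alpha_i}^2(k_i-k_i')})$ in \eqref{eq:25} collapses to $q\gamma(-\delta)$ when $m_i^{-1}\p^{l_i}$ is a unit, so that $f^{(\p;\bf k)}(x)=f_{\beta,i}(\x)|_{x_s=1,\,s\neq i}$.

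Next I would invoke Theorem \ref{thm:f.rootform}, which gives the exact multivariable identity $f_{\beta,i}(\x)=(qx_i)^{l_i+1-\delta}f_{\beta,i}(\sigma_i\x)$ (and the analogous statement when $\delta=0$). Since $(\sigma_i\x)_i=1/(q^2x_i)$, the substitution appearing in the theorem is precisely the effect of $\sigma_i$ on the $i$-th coordinate. To convert this into a statement about the one-variable function I would use that each of $N_{\beta,i}$ and $N_{\sigma_i\bullet\beta,i}$ factors as a monomial ($\x^\beta$, resp.\ $\x^{\sigma_i\bullet\beta}$) times a polynomial in $x_i^m$, so that $f_{\beta,i}(\x)=\x^\beta F(x_i)$ for a single-variable rational function $F$, and $f^{(\p;\bf k)}(x)=x^{k_i}F(x)$. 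Applying the monomial law \eqref{eqn:monomialaction} to compute $(\sigma_i\x)^\beta=q^{d(\sigma_i\beta-\beta)}\x^{\sigma_i\beta}$ and then setting $x_s=1$ for $s\neq i$ expresses $f_{\beta,i}(\sigma_i\x)|_{x_s=1}$ as an explicit power of $(qx_i)$ times $f^{(\p;\bf k)}(1/(q^2x))$; feeding this back into the identity of Theorem \ref{thm:f.rootform} yields the desired ratio as a single power of $(qx)$.

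The step requiring the most care, and where I expect the main obstacle to lie, is exactly this last bookkeeping. Because the one-variable substitution only sees the $i$-th coordinate of $\sigma_i$, the prefactor produced by \eqref{eqn:monomialaction} carries the ``cross'' contribution $\sum_{s\neq i}k_s\,c(s,i)=\langle\beta,\alpha_i^\dual\rangle-2k_i$ coming from the remaining simple-root coordinates, and one must verify that these $q$- and $x$-powers recombine with the exponent $l_i+1-\delta$ of Theorem \ref{thm:f.rootform} to produce precisely the exponent stated in Theorem \ref{thm:f}. This is the point where the interplay between the dot action \eqref{eqn:dotaction}, the definition of $\mathbf{k}'$, and the height function $d$ must be tracked exactly rather than schematically.

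Finally, for general $\bfm$ the only change is that $\gamma(-\delta)$ is replaced throughout by $g(m_i^{-1}\p^{l_i},\p;\epsilon^{\norm{\alpha_i}^2(k_i-k_i')})$; using the reciprocity and multiplicativity of Gauss sums from Section \ref{sec:preliminaries} one checks that this replacement preserves the cancellations that make the sum of the $\PP$- and $\QQ$-terms collapse in the proof of Theorem \ref{thm:f.rootform}, so that the shape of the functional equation is unchanged and only the Gauss sum normalization is affected. As in that proof, the cases $\delta\neq 0$ and $\delta=0$ should be handled separately, the latter being simpler since the $N^{(\p;\bf k')}$ term drops out of \eqref{eq:25}.
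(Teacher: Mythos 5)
Your strategy is the same as the paper's: the paper's entire proof of Theorem \ref{thm:f} is the identification, stated just before the theorem, of $f^{(\p;\mathbf{k})}$ with $f_{\beta,i}$ restricted to $x_j=1$ for $j\neq i$, plus the phrase ``a mild generalization of Theorem \ref{thm:f.rootform}.'' Your reduction steps (the collapse of $g(m_i^{-1}\p^{l_i},\p;\epsilon^{\norm{\alpha_i}^2(k_i-k_i')})$ to $q\gamma$ for pure $\p$-power $\bfm$, the factorizations $f_{\beta,i}(\x)=\x^{\beta}F(x_i)$ and $f^{(\p;\mathbf{k})}(x)=x^{k_i}F(x)$, and the use of \eqref{eqn:monomialaction}) are all correct. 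The genuine gap is exactly the step you postpone, because it does not come out the way you predict. Carrying it out: by \eqref{eqn:wiaction1} one has $(\sigma_i\x)^{\beta}=(qx_i)^{-C}\x^{\beta}$ with $C=\sum_{s}k_sc(s,i)$, so Theorem \ref{thm:f.rootform} is equivalent to $F(x)/F(1/(q^2x))=(qx)^{l_i+1-\delta-C}$, where $\delta=(k_i'-k_i)_m$; therefore
\[
\frac{f^{(\p;\mathbf{k})}(x;\bfm,\alpha_i)}{f^{(\p;\mathbf{k})}(1/(q^2x);\bfm,\alpha_i)}
=(qx)^{2k_i}\cdot\frac{F(x)}{F(1/(q^2x))}
=(qx)^{\,l_i+1-\delta-\sum_{j\neq i}k_jc(j,i)}
=(qx)^{\,k_i+k_i'-\delta},
\]
and similarly $(qx)^{k_i+k_i'-m}$ when $\delta=0$. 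The cross terms $\sum_{j\neq i}k_jc(j,i)$ do \emph{not} cancel: since $c(j,i)\leq 0$, they vanish only when $k_j=0$ for every $j$ adjacent to $i$ (for instance in rank one). So the quantity you hope equals $(qx)^{l_i+1-(k_i'-k_i)_m}$, with $l_i=\ord_\p m_i$ as defined before \eqref{eq:25}, is actually $(qx)^{k_i+k_i'-(k_i'-k_i)_m}$, and these differ whenever a neighboring $k_j$ is nonzero.

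This is not a bookkeeping detail you can make disappear, and seeing why is instructive: the exponent $k_i+k_i'-\delta$ (equivalently, the printed exponent with $l_i$ replaced by $\ord_\p(Am_i)$, where $A$ absorbs $-\sum_{j\neq i}k_jc(j,i)$) is precisely what the proof of Theorem \ref{thm:FEa} consumes. There the symbols $l_1,\dotsc,l_V$ are defined by $Am_1=\p_1^{l_1}\cdots\p_V^{l_V}$, so they already contain the cross terms coming from $a_2,\dotsc,a_r$, and matching the Kubota factor $\nrm{Am_1}^{1-s_1}$ against $\nrm{\p_1^{(l_1-2k_1)_m}\cdots\p_V^{(l_V-2k_V)_m}}^{1-s_1}$ forces each $f$-ratio to be $(q_tx)^{l_t-(l_t-2k_t)_m}$, i.e.\ the cross-term-corrected power. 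So the honest completion of your argument proves the functional equation with exponent $k_i+k_i'-(k_i'-k_i)_m$ (resp.\ $k_i+k_i'-m$), which is the statement the rest of the paper needs; aimed at the literal printed exponent, your deferred verification would fail. A concrete check: for $n=1$, $\Phi=A_2$, $\bfm=(1,1)$, one computes $(1|_{\ell}w)\equiv 1$, hence $N(\x;\ell)=D(\x)$ by the identity $\Delta(\x)=\sum_w j(w,\x)$ of Section \ref{ss:weights}; taking $i=1$, $\mathbf{k}=(0,1)$ gives $k_1'=2$, $f^{(\p;\mathbf{k})}(x)=-(1+qx)$, and the ratio under $x\mapsto 1/(q^2x)$ is $qx=(qx)^{k_1+k_1'-m}$, whereas $(qx)^{l_1+1-m}=1$.
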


\section{Kubota's Dirichlet series} \label{sec:kubota}

In this section we define and describe the functional equations of the
Kubota Dirichlet series.  These are Dirichlet series in one complex
variable whose coefficients are $n^{th}$ order Gauss sums.  These
series arise as the Whittaker coefficients of an Eisenstein series on
the $n$-fold metaplectic cover of $GL_2(F).$ For an integer $j$,
$\psia\in \mathcal M_j(\Omega)$ and $a\in \OO_S^\times$, we define
\begin{equation}
  \label{eq:6}
  \D(s,a; \psia,\epsilon^j)=\sum_{0\neq c\in \OO_S/\OO_S^\times}
\frac{g(a,c;\epsilon^j)\psia(c)}{\nrm{c}^s}.
\end{equation}
This is initially defined for $\re(s)>3/2,$ where the series is
absolutely convergent.

Let $m=\gcd(n, j)$ and
set 
\begin{equation}\label{eqn:defofG}
G_m(s)=\bigl((2\pi)^{-(m-1)(s-1)}{\Gamma(ms -m)}/{\Gamma
   (s-1)}\bigr)^{[F:\Q]/2}.
\end{equation}
Define
\begin{equation}
  \label{eq:7}
  \D^*(s,a; \psia,\epsilon^j)=G_m(s)\zeta(ms-m+1)
\D(s,a; \psia,\epsilon^j)
\end{equation}
where $\zeta$ is the Dedekind zeta function of $F.$

If $\psia \in {\mathcal M}_j(\Omega)$ and
$\eta \in F^\times$ we define
\begin{equation}\label{def:Psieta}
  \hat\psia_\eta(c)=(\eta, c)^j\psia(\eta c)\text{\ \ \ and\ \ \  }
  \tilde\psia_\eta(c)=(\eta, c)^j\psia(\eta^{-1} c^{-1}).
\end{equation}
It is easy to see that $\hat\psia_\eta$ and
$\tilde\psia_\eta$ are both in ${\mathcal
  M}_j(\Omega)$ and that they depend only on the class of $\eta$ in
$F^\times_S/F^{\times n}_S$, where the $n$ denotes taking $n$th powers.  
Then we have the following:
\begin{prp}[\!\!\cite{ku1,ku2,bb}]\label{prp:KubotaFE}
  The function $ \D^*(s,a; \psia,\epsilon^j)$ has a meromorphic
  continuation to $\C$ and is holomorphic except for possible simple
  poles at $s=1\pm1/m.$ Moreover $\D^*(s,a; \psia,\epsilon^j)$
  satisfies the functional equation
$$\D^*(s,a; \psia,\epsilon^j)=\nrm {a}^{1-s} 
\sum_{\eta\in F_S^\times/F_S^{\times n}} P_{a \eta}^j(s) 
\D^*(2-s, a; \tilde\psia_\eta, \epsilon^j).$$   
Here the $ P_{a \eta}^j(s)$ are Dirichlet polynomials supported on the
places in $S.$ 
\end{prp}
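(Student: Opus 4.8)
The plan is to follow the metaplectic Eisenstein series approach of Kubota \cite{ku1,ku2}, in the form developed by Brubaker--Bump \cite{bb}, deducing both assertions from the corresponding properties of an Eisenstein series on the $n$-fold cover of $\GL_2$. First I would realize $\D(s,a;\psia,\epsilon^j)$ as a Whittaker coefficient: form the metaplectic Eisenstein series $E(g,s;\psia)$ on $\widetilde{\GL}_2(F_S)$, induced from a quasi-character of the diagonal torus depending on $s$, with a flat section at the places of $S$ determined by $\psia\in\M_j(\Omega)$ and the spherical vector elsewhere. Computing the $a$-th Fourier coefficient of $E$ along the unipotent radical and unfolding over the Bruhat decomposition, the sum over the lower-left entry $c$ produces exactly the Gauss sums $g(a,c;\epsilon^j)$, so that up to the archimedean Whittaker function and an $S$-factor this coefficient is $\D(s,a;\psia,\epsilon^j)$. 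This is where the Gauss sum identities and reciprocity of Section \ref{sec:preliminaries} enter, since they are needed to factor the local contributions into the global Gauss sum and to check that the resulting sum is well defined on $\OO_S/\OO_S^\times$.

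Next I would invoke the meromorphic continuation and functional equation of $E(g,s;\psia)$. As $\widetilde{\GL}_2$ has rank one, this is the classical rank-one theory: continuation follows from that of the constant term, and the functional equation $s\mapsto 2-s$ comes from the standard intertwining operator $M(s)$ attached to the nontrivial Weyl element. The feature special to the metaplectic setting is that $M(s)$ acts not by a scalar but by a finite-dimensional ``scattering matrix'' on $\M_j(\Omega)$, which has dimension $[F_S:\Omega]$. Reading off Fourier coefficients on both sides of the functional equation for $E$, this scattering matrix produces exactly the sum over $\eta\in F_S^\times/F_S^{\times n}$ together with the twists $\psia\mapsto\tilde\psia_\eta$, and its matrix entries are the Dirichlet polynomials $P^j_{a\eta}(s)$ supported on $S$. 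The completing factor $G_m(s)\zeta(ms-m+1)$ absorbs the archimedean Whittaker integral and the zeta factor appearing in the constant term, so that the relation for $E$ descends to the stated clean functional equation for $\D^*$; the possible simple poles at $s=1\pm 1/m$ are then located by tracking the poles of the constant term of $E$ against the poles and zeros of $G_m(s)\zeta(ms-m+1)$.

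The main obstacle is the explicit local computation at the ramified places in $S$: one must compute the local intertwining operator on $\M_j(\Omega)$ to pin down the scattering matrix, show that its entries are genuine Dirichlet polynomials supported on $S$ rather than merely meromorphic functions, and verify that the $\eta$-sum and the twisted functions $\tilde\psia_\eta$ appear in precisely the form stated. A secondary but delicate point is the bookkeeping of normalizations, so that the archimedean and $S$-factors combine with $\zeta(ms-m+1)$ into $G_m(s)$ and the poles come out simple and located exactly at $s=1\pm 1/m$; this is where I would rely most on the explicit rank-one computations of \cite{bb}.
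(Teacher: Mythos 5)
The paper gives no proof of this proposition at all---it is imported verbatim from Kubota and Brubaker--Bump via the citations \cite{ku1,ku2,bb}---and your outline is precisely the argument of those sources: realize $\D(s,a;\psia,\epsilon^j)$ as a Whittaker coefficient of an Eisenstein series on the $n$-fold metaplectic cover of $\GL_2$, obtain continuation and the $s\mapsto 2-s$ relation from the rank-one theory, and read the $\eta$-sum, the twists $\tilde\psia_\eta$, and the Dirichlet polynomials $P^j_{a\eta}(s)$ off the finite-dimensional scattering matrix of the intertwining operator acting on $\M_j(\Omega)$. So your approach is essentially the same as the intended one, and the two obstacles you single out (the explicit local intertwining computation at the places of $S$, and the normalization bookkeeping making $G_m(s)\zeta(ms-m+1)$ with $m=\gcd(n,j)$ yield simple poles exactly at $s=1\pm 1/m$) are exactly the content supplied by \cite{bb}.
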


\begin{remark}
Based on the explicit functional equations given in the function field
case (see \cite{sjpNote,jhoff}), we expect the Dirichlet polynomials
to be closely related to the rational functions $\PP$ and $\QQ$ which
appear in Section \ref{sec:Waction}.  However, the nature of the
precise relationship is unclear and not needed for what follows.
\end{remark}

Given a set of primes $T,$ we define
\begin{equation}
  \D_T(s,a;\psia, \epsilon^j)=\sum_{\substack{0\neq c\in
      \OO_S/\OO_S^\times\\(c, T)=1 } }
  \frac{g(a,c;\epsilon^j)\psia(c)}{\nrm{c}^s}.
\end{equation}
If $\m_0=\prod_{\p\in T} \p$ we sometimes write $\D_{\m_0}(s, a;\psia,
\epsilon^j) $  for $\D_T(s,a;\psia, \epsilon^j).$

Using properties of Gauss sums, we can relate the functions $\D_T$ to
the functions $\D_{T'}$ for different sets $T$ and $T'.$ This is the
content of the following two lemmas.

\begin{lemma} Let $\p\in \OO_S/\OO_S^\times$ be prime of norm $q.$ 
For an integer
$i$ with $0\leq i\leq m-1$ and $a_1,a_2,\p$ all pairwise relatively
prime, we have
\begin{equation*}\begin{split}
\D_{a_1}(s,a_2\p^i;\psia, \epsilon^j)\qquad\qquad\qquad\qquad\qquad
\qquad\qquad\qquad\qquad\qquad\\
=\D_{\p a_1}(s, a_2\p^i;\psia, \epsilon^j) + 
\frac{g(a_2\p^i, \p^{i+1};\epsilon^j)} {q^{(i+1)s}} 
\D_{\p a_1}(s,a_2\p^{(m-i-2)_m};\hat\psia_\eta, \epsilon^j)
\end{split}\end{equation*}
where $\eta\sim \p^{i+1}$.  Here we write $a\sim b$ to mean that $a,b$
lie in the same coset modulo $F_{S}^{\times n}$.

\end{lemma}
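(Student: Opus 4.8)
The plan is to expand $\D_{a_1}(s,a_2\p^i;\psia,\epsilon^j)$ as a sum over nonzero $c\in\OO_S/\OO_S^\times$ with $(c,a_1)=1$ and to sort the terms by the $\p$-adic valuation of $c$. Writing $c=\p^k c'$ with $(c',\p)=1$ and $k\geq 0$, the condition $(c,a_1)=1$ becomes $(c',\p a_1)=1$ in each term with $k\geq 1$, while the $k=0$ terms are exactly those counted by $\D_{\p a_1}$. Thus the $k=0$ contribution is precisely $\D_{\p a_1}(s,a_2\p^i;\psia,\epsilon^j)$, and everything reduces to showing that the tail $\sum_{k\geq 1}$ collapses to the single displayed correction term.

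The crucial input is the vanishing of the prime-power Gauss sum $g(a_2\p^i,\p^k;\epsilon^j)$ for all $k\geq 1$ except $k=i+1$. By property (3) of the Proposition this equals $\epsilon^{-j}\prs{a_2}{\p^k}g(\p^i,\p^k;\epsilon^j)$, so it suffices to analyze $g(\p^i,\p^k;\epsilon^j)=\sum_d \prs{d}{\p}^{jk}\psi(\p^i d/\p^k)$. For $1\leq k\leq i$ the additive character is trivial and the residue-symbol character is nontrivial (since $m\nmid k$ in this range, precisely because $1\leq k\leq i\leq m-1$), so the character sum vanishes; for $k\geq i+2$ the conductor of the additive character exceeds that of the residue-symbol character, and the sum again vanishes. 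These are the standard Gauss-sum facts cited after the Proposition, and the hypothesis $0\leq i\leq m-1$ is exactly what rules out the intermediate range.

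For the surviving term $k=i+1$ I would first apply twisted multiplicativity, property (2), to write $g(a_2\p^i,\p^{i+1}c';\epsilon^j)=g(a_2\p^i,\p^{i+1};\epsilon^j)\,g(a_2\p^i,c';\epsilon^j)\,\epsilon^j\!\left(\prs{\p^{i+1}}{c'}\prs{c'}{\p^{i+1}}\right)$, together with $|\p^{i+1}c'|^s=q^{(i+1)s}|c'|^s$; this isolates the prefactor $g(a_2\p^i,\p^{i+1};\epsilon^j)/q^{(i+1)s}$. To identify the remaining sum over $c'$ with $\D_{\p a_1}(s,a_2\p^{(m-i-2)_m};\hat\psia_\eta,\epsilon^j)$, I would rewrite the twist by property (3), $g(a_2\p^i,c';\epsilon^j)=\prs{\p}{c'}^{-j(i-(m-i-2)_m)}g(a_2\p^{(m-i-2)_m},c';\epsilon^j)$, and convert $\psia(\p^{i+1}c')$ into $\hat\psia_\eta(c')$ using $\hat\psia_\eta(c')=(\eta,c')^j\psia(\eta c')$ with $\eta\sim\p^{i+1}$, which contributes a Hilbert-symbol factor $(\p^{i+1},c')_S^{-j}$.

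What remains is to check that the accumulated roots of unity cancel: the cocycle factor from (2), the residue-symbol factor from (3), and the Hilbert-symbol factor from the $\psia$-conversion must multiply to $1$. Using reciprocity, property (1), to trade $\prs{c'}{\p}$ for $\prs{\p}{c'}$, all the Hilbert symbols cancel and one is left with $\prs{\p}{c'}^{\,j(i+(m-i-2)_m+2)}$. Since $i+(m-i-2)_m+2$ is a multiple of $m$ and $n\mid jm$, this is $1$. I expect this final bookkeeping to be the main obstacle: one must keep the reciprocity and multiplicativity cocycles aligned so that the squared residue symbol produced by (2) combines correctly with the twist-shift exponent, and one must separately verify the two cases $i\leq m-2$ and $i=m-1$, where the reduction $(m-i-2)_m$ behaves differently. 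The Gauss-sum vanishing is the conceptual core, but the symbol cancellation is where the care lies.
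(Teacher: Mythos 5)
Your proposal is correct and follows the paper's own argument: the paper likewise expands $\D_{a_1}$, sorts terms by the power of $\p$ dividing $c$, applies the multiplicativity property (2) to split off $g(a_2\p^i,\p^k;\epsilon^j)$, and concludes from the vanishing of this prime-power Gauss sum unless $k=0$ or $k=i+1$. The only difference is that the paper states the vanishing and the identification of the $k=i+1$ term with $\D_{\p a_1}(s,a_2\p^{(m-i-2)_m};\hat\psia_\eta,\epsilon^j)$ without proof, whereas you supply the character-sum argument and the reciprocity/Hilbert-symbol bookkeeping explicitly (and your final exponent $j(i+(m-i-2)_m+2)\equiv 0 \bmod n$ is exactly the cancellation needed).
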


\begin{proof}
For $\p, a_1, a_2$ as in the statement, 
\begin{align*}
 \D_{a_1}&(s,a_2\p^i;\psia, \epsilon^j)\\
&= \sum_{\substack {0\neq c\in
    \OO_S/\OO_S^\times\\(c, a_1)=1}}
\frac{g(a_2\p^i,c;\epsilon^j)\psia(c)} {|c|^s} \\
&=\sum_{k\geq 0} \sum_{\substack{(c, a_1\p)=1}}
\frac{g(a_2\p^i,c\p^k;\epsilon^j)\psia(q^k c)}{|c|^sq^{ks}}\\
&=\sum_{k\geq 0} \sum_{\substack{(c, a_1\p)=1 }}
\frac{g(a_2\p^i,c;\epsilon^j)g(a_2\p^i, \p^k;\epsilon^j)\psia(q^k c)}
{|c|^sq^{ks}}\prs{c}{\p^{k}}^j\prs{\p^k}{c}^j \\
&= \sum_{\substack{(c, a_1\p)=1 }}
\frac{g(a_2\p^i,c;\epsilon^j)}{|c|^s}\biggl( \sum_{k\geq 0}
\frac{g(a_2\p^i, \p^k;\epsilon^j)\psia(q^k c)}{q^{ks}} 
\prs{\p^{2jk}}{c}(\p^k, c)_S^j\biggr) 
\end{align*}
The Gauss sum in the inner sum vanishes unless $k=0$ or $i+1.$  This
proves the lemma.
\end{proof}

Inverting the previous lemma, we obtain

\begin{lemma}\label{lemma:dp}
If $0\leq i\leq m-2$ and $a_1, a_2, \p$ are as above, and 
$\eta\sim \p^{i+1}$, then  
\begin{align*}\D_{\p a_1}&(s,a_2 \p^i;\psia,\epsilon^j)= \\
&\frac{\D_{a_1}(s,a_2\p^i;\psia,\epsilon^j)}{1-|\p|^{m-1-ms}}- 
\frac{g(a_2\p^i,\p^{i+1};\epsilon^{j})}{|\p|^{(i+1)s}} 
\frac{\D_{a_1}(s,a_2\p^{m-i-2};\hat\psia_\eta,\epsilon^j )}
{1-|\p|^{m-1-ms}}, \end{align*}
and if $i=m-1$, then
$$\D_{\p a_1}(s, a_2\p^i;\psia, \epsilon^j)=
\frac{\D_{a_1}(s,a_2\p^i;\psia, \epsilon^j)}{1-|\p|^{m-1-ms}}.$$ 
\end{lemma}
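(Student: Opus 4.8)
The plan is to invert the previous lemma by assembling and solving a $2\times 2$ linear system. That lemma writes each $\D_{a_1}(s,a_2\p^i;\psia,\epsilon^j)$ as $\D_{\p a_1}(s,a_2\p^i;\psia,\epsilon^j)$ plus a second $\D_{\p a_1}$ term at the reflected index $(m-i-2)_m$ carrying the twisted coefficient $\hat\psia_\eta$, $\eta\sim\p^{i+1}$. First I would invoke this relation once at the index $i$ with coefficient function $\psia$, and a second time at the reflected index $i'=(m-i-2)_m$ with coefficient function $\hat\psia_\eta$. For $0\le i\le m-2$ one has $i'=m-i-2$ and $(m-i'-2)_m=i$, so the two relations involve exactly the two unknowns $\D_{\p a_1}(s,a_2\p^i;\psia)$ and $\D_{\p a_1}(s,a_2\p^{i'};\hat\psia_\eta)$, together with one ``doubly twisted'' term $\D_{\p a_1}(s,a_2\p^i;\widehat{(\hat\psia_\eta)}_{\eta'})$, where $\eta'\sim\p^{i'+1}$.

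The first key step is to check that this doubly twisted term is a scalar multiple of the original unknown $\D_{\p a_1}(s,a_2\p^i;\psia)$, so that the system closes. Bilinearity of the Hilbert symbol gives $\widehat{(\hat\psia_\eta)}_{\eta'}=(\eta,\eta')_S^{j}\,\hat\psia_{\eta\eta'}$ with $\eta\eta'\sim\p^{m}$. I would then absorb the twist by $\p^m$: since the sum defining $\D_{\p a_1}$ runs only over $c$ coprime to $\p$, the factor $\hat\psia_{\p^m}(c)=(\p^m,c)_S^{j}\psia(\p^m c)$ can be rewritten using the quasi-invariance of $\psia$, reciprocity (Proposition, part (1)), and the Gauss-sum transformation laws (parts (2)--(3)), reducing the doubly twisted series to an explicit scalar $\kappa$ times $\D_{\p a_1}(s,a_2\p^i;\psia)$.

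Once the system is closed it reads $\D_{a_1}(s,a_2\p^i;\psia)=u+c_A v$ and $\D_{a_1}(s,a_2\p^{i'};\hat\psia_\eta)=v+c_B\kappa\,u$, where $u,v$ are the two unknowns and $c_A=g(a_2\p^i,\p^{i+1};\epsilon^j)/q^{(i+1)s}$, $c_B=g(a_2\p^{i'},\p^{i'+1};\epsilon^j)/q^{(i'+1)s}$. Solving by Cramer's rule gives denominator $1-c_Ac_B\kappa$ and the off-diagonal correction coefficient $c_A$, which already matches the stated form. Evaluating the prime-power Gauss sums by extracting the $\p$-power, so that $g(a_2\p^i,\p^{i+1};\epsilon^j)$ is $q^{i}$ times a unit multiple of $g(1,\p;\epsilon^j)$ and likewise $g(a_2\p^{i'},\p^{i'+1};\epsilon^j)$ is $q^{m-i-2}$ times a unit multiple, the $q$-powers combine as $q^{i}q^{m-i-2}\cdot q\cdot q^{-ms}=q^{m-1-ms}$, where the extra $q$ comes from the norm relation $g(1,\p;\epsilon^t)g(1,\p;\epsilon^{-t})=q$ (Proposition, part (4)). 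Hence $1-c_Ac_B\kappa=1-|\p|^{m-1-ms}$, giving the lemma for $0\le i\le m-2$. The case $i=m-1$ is the degenerate one: there $i'=(m-i-2)_m=m-1=i$, the two relations collapse to a single one-dimensional equation in $\D_{\p a_1}(s,a_2\p^{m-1};\psia)$, and the same computation folds the correction term back onto that unknown, yielding the clean geometric inversion with no separate second term.

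The main obstacle is the bookkeeping in the second paragraph: tracking all Hilbert-symbol and $n$th-power-residue-symbol factors through the double twist and the $\p^m$-reduction, and verifying that the scalar $\kappa$ supplies the \emph{conjugate} character $\epsilon^{-j}$ against the $\epsilon^{j}$ of the Gauss sums. This conjugation is precisely what makes part (4) of the Proposition applicable and forces the single factor $q^{m-1}$, hence the exact denominator $1-|\p|^{m-1-ms}$, rather than some other expression. Extracting that one power of $q$ with the correct sign is where essentially all of the content lies; the remaining linear algebra is routine.
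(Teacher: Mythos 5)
Your overall plan is exactly the paper's own: the paper's entire proof of this lemma is the clause ``Inverting the previous lemma, we obtain,'' and that inversion is precisely your $2\times 2$ system --- the preceding lemma applied at index $i$ to $\psia$ and at the reflected index $i'=m-i-2$ to $\hat\psia_\eta$, closure because the composite twist is by $\eta\eta'\sim\p^{m}$, then Cramer's rule --- together with your reading of the degenerate case $i=m-1$, where $i'=i$ and the single relation folds back on itself.

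However, one step of your outline fails as written, and it is the step you yourself flag as carrying all the content. Pulling the prime power out of the Gauss sums gives $g(a_2\p^{i},\p^{i+1};\epsilon^j)=q^{i}\,\prs{a_2}{\p}^{-j(i+1)}g(1,\p;\epsilon^{j(i+1)})$: the character is $\epsilon^{j(i+1)}$, not $\epsilon^{j}$, and likewise the second coefficient carries $g(1,\p;\epsilon^{j(i'+1)})$. The conjugacy that makes part (4) of the Proposition applicable is therefore supplied by the index reflection itself --- since $(i+1)+(i'+1)=m$ and $n\mid jm$, one has $j(i'+1)\equiv-j(i+1)\pmod{n}$, with $j(i+1)\not\equiv 0$ exactly because $0\le i\le m-2$ --- and it cannot be supplied by $\kappa$: $\kappa$ is a product of Hilbert and power residue symbols, hence a root of unity, and multiplying a Gauss sum by a root of unity does not conjugate its character. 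What must actually be verified about $\kappa$ is that it equals $1$. This holds because $\kappa=(\p^{i+1},\p^{i'+1})_S^{j}$ is a power of $(\p,\p)_S$, and $(\p,\p)_S=(\p,-1)_S^{-1}=\prs{-1}{\p}^{-1}=1$ since $-1=\zeta_{2n}^{n}$ is an $n$th power in $F$; one also uses $(\p^{m},c)_S^{j}=1$ (again $n\mid jm$) and the periodicity $\psia(\p^{m}c)=\psia(c)$. Note that this periodicity follows from $\Omega$-quasi-invariance only when $\gcd(n,j)=1$, so that $\p^{m}=\p^{n}\in F_S^{\times n}\subset\Omega$; when $\gcd(n,j)>1$ it is an implicit hypothesis on $\psia$, one which the paper's statement also makes silently. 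With these repairs your count $c_Ac_B\kappa=q^{i}\,q^{i'}\cdot q\cdot q^{-ms}=q^{m-1-ms}$ is correct and the lemma follows.
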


Before stating and proving the main result of this section, we need to
extend the functional equation of the Kubota series $\D(s,a;\psia,
\epsilon^j)$ to a slightly more general class of $\psia.$ We follow
Section 5 of \cite{wmd2} and Section 7 of \cite{wmd4}.  Let
$\bfm=(m_1, \ldots, m_r).$ This $r$-tuple will be fixed for the rest
of the section.  Let $\M (\Omega,\Phi)$ be the space of functions
$\psia\colon (F_S^\times)^r\to \C$ such that
\begin{equation}
  \label{eq:18}
  \psia(\varepsilon_1 a_1, \ldots ,\varepsilon_r a_r)=\prod_{i=1}^r
(\varepsilon_i, a_i)_S^{\norm{\alpha_i}^2}\Bigl(\prod_{i<j} 
(\varepsilon_i, a_j)_S^{2\langle \alpha_i, \alpha_j\rangle}\Bigr)
  \psia(a_1, \ldots , a_r).
\end{equation}
 Let $\mathcal{A}$ be the
ring of Laurent polynomials in $|\p_v|^{s_i}$ where $v$ runs over the
places in $S_{\fin}.$  Define $\MM=\MM(\Omega, \Phi)=
\mathcal{A}\otimes\M (\Omega,\Phi)$  and $\MM_j(\omega)=
\mathcal{A}\otimes\M_j (\Omega).$
If $\psia\in \MM(\Omega,\Phi)$
and $a_1, \ldots, a_r\in  \OO_S/\OO_S^\times,$  define
 $\psia_i^{(a_1, \ldots, a_r)}$ by
\begin{equation}
  \label{eq:10}
  \psia_i^{(a_1, \ldots, a_r)}(\s;c)=
\psia(\s;a_1, \ldots, a_i c, \ldots,  a_r) 
(c, a_i)_S^{-\norm{\alpha_i}^2} 
\prod_{j>i} (c,a_j)_S^{-2\langle \alpha_i, \alpha_j\rangle}. 
\end{equation}

\begin{lemma}[Lemma 5.3 \cite{wmd2}]
For fixed  $a_1, \ldots, a_r\in  \OO_S/\OO_S^\times$, we have
\[
\psia_i^{(a_1, \ldots, a_r)}\in \MM_{\norm{\alpha_i}^2}(\Omega). 
\]
\end{lemma}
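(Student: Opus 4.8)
The plan is to verify directly that $\psia_i^{(a_1,\ldots,a_r)}$ obeys the defining transformation law of $\M_{\norm{\alpha_i}^2}(\Omega)$, namely that a function $\Psi$ on $F_{\fin}^\times$ lies in $\M_j(\Omega)$ precisely when $\Psi(\varepsilon c)=(c,\varepsilon)_S^{-j}\Psi(c)$ for every $\varepsilon\in\Omega$; here we must produce this with $j=\norm{\alpha_i}^2$. First I would reduce to the ``untwisted'' space: since $\psia\in\MM(\Omega,\Phi)=\mathcal{A}\otimes\M(\Omega,\Phi)$, the dependence on $\s$ lives entirely in the ring $\mathcal{A}$ of Laurent polynomials in the $|\p_v|^{s_i}$, and none of the Hilbert-symbol factors occurring in \eqref{eq:10} or in the transformation law \eqref{eq:18} involve $\s$. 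Hence it suffices to check the transformation property for $\psia\in\M(\Omega,\Phi)$, and the general statement follows by extending scalars by $\mathcal{A}$, placing the result in $\MM_{\norm{\alpha_i}^2}(\Omega)=\mathcal{A}\otimes\M_{\norm{\alpha_i}^2}(\Omega)$.

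Next I would substitute $\varepsilon c$ for $c$ in the definition \eqref{eq:10} and apply \eqref{eq:18} to the factor $\psia(a_1,\ldots,a_i\varepsilon c,\ldots,a_r)$. The key combinatorial point is that here only the $i$-th argument is multiplied by $\varepsilon$, so in \eqref{eq:18} one takes $\varepsilon_i=\varepsilon$ and $\varepsilon_k=1$ for $k\neq i$. Then only the $k=i$ term of the first product of \eqref{eq:18} survives, contributing $(\varepsilon,a_ic)_S^{\norm{\alpha_i}^2}$, and in the second product only the pairs $(i,j)$ with $j>i$ survive, contributing $\prod_{j>i}(\varepsilon,a_j)_S^{2\langle\alpha_i,\alpha_j\rangle}$.

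I would then collect all Hilbert-symbol factors, namely the two just produced by \eqref{eq:18} together with the factors $(\varepsilon c,a_i)_S^{-\norm{\alpha_i}^2}$ and $\prod_{j>i}(\varepsilon c,a_j)_S^{-2\langle\alpha_i,\alpha_j\rangle}$ coming from \eqref{eq:10} evaluated at $\varepsilon c$. Using bimultiplicativity of the symbol, for each $j>i$ the factor $(\varepsilon,a_j)_S$ cancels the $\varepsilon$-part of $(\varepsilon c,a_j)_S^{-1}$, leaving $\prod_{j>i}(c,a_j)_S^{-2\langle\alpha_i,\alpha_j\rangle}$; similarly the $(\varepsilon,a_i)_S$ contributions cancel, leaving $(\varepsilon,c)_S^{\norm{\alpha_i}^2}(c,a_i)_S^{-\norm{\alpha_i}^2}$. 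Finally the skew-symmetry $(\varepsilon,c)_S=(c,\varepsilon)_S^{-1}$ turns $(\varepsilon,c)_S^{\norm{\alpha_i}^2}$ into $(c,\varepsilon)_S^{-\norm{\alpha_i}^2}$. Recognizing the surviving product $(c,a_i)_S^{-\norm{\alpha_i}^2}\prod_{j>i}(c,a_j)_S^{-2\langle\alpha_i,\alpha_j\rangle}\psia(\s;a_1,\ldots,a_ic,\ldots,a_r)$ as exactly $\psia_i^{(a_1,\ldots,a_r)}(\s;c)$ yields $\psia_i^{(a_1,\ldots,a_r)}(\s;\varepsilon c)=(c,\varepsilon)_S^{-\norm{\alpha_i}^2}\psia_i^{(a_1,\ldots,a_r)}(\s;c)$, which is the desired membership.

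The computation is entirely routine; the only thing requiring care is the bookkeeping of which cross-terms of \eqref{eq:18} survive when a single coordinate is scaled, and the correct use of skew-symmetry to flip $(\varepsilon,c)_S$ into $(c,\varepsilon)_S$. In fact the twisting factors in \eqref{eq:10} are designed precisely to cancel the $a_i$- and $a_j$-dependence introduced by \eqref{eq:18}, so that only the diagonal $(c,\varepsilon)_S$ contribution remains. Thus the main ``obstacle'' is organizing the Hilbert-symbol algebra cleanly rather than any genuine difficulty, and local constancy of $\psia_i^{(a_1,\ldots,a_r)}$ is inherited from that of $\psia$.
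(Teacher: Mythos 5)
Your verification is correct. The paper itself gives no proof of this lemma—it is imported by citation from Lemma 5.3 of \cite{wmd2}—and your direct check of the defining transformation law (bimultiplicativity of the Hilbert symbol, the cancellation of the cross terms of \eqref{eq:18} against the twisting factors built into \eqref{eq:10}, and the skew-symmetry $(\varepsilon,c)_S=(c,\varepsilon)_S^{-1}$ at the end) is exactly the standard argument that the cited source uses.
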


Continuing to follow \cite{wmd2, wmd4} we define an action of the Weyl
group $W$ on $\MM (\Omega,\Phi).$  First, we let the simple reflection
$\sigma_{i}$ act on the complex vector $\s$ by 
\[
(\sigma_{i}\s)_{j} = s_{j}-c (j,i) (s_{i}-1),  
\]
where $c (j,i)$ is the Cartan integer. This corresponds to the action
\eqref{eqn:wiaction1} under the change of variables $x_{i} =
q^{-s_{i}}$. 
Then for the simple reflection $\sigma_i$
and $\psia\in \MM (\Omega,\Phi)$, define
\begin{equation}
  \label{eq:19}\begin{split}
  (\sigma_i \psia)(\s;a_1, \ldots , a_r)=&
\Bigl(\sum_{\eta\in F_S^\times/F_S^{\times n}}
P^{\norm{\alpha_i}^2}_{\eta m_ib_i}(s_i) (\eta,
a_i)_S^{\norm{\alpha_i}^2} \prod_{j>i} 
(\eta, a_j)_S^{2\langle \alpha_i, \alpha_j\rangle}\Bigr)\\
& \ \ \times\psia(\sigma_i\s;a_1, \ldots , a_r)
\end{split}\end{equation}
where $b_i=\prod_i a_i^{-2\langle \alpha_i, \alpha_j\rangle/ \langle
\alpha_i, \alpha_i\rangle}$ and $\s=(s_1, \ldots, s_r).$ The
$W$-action on $\MM (\Omega,\Phi)$ depends on the twisting parameter
$\bfm$, but we suppress this dependence from the notation.  When there
is no chance of confusion, we will not explicitly indicate the $\s$
dependence for $\psia$ in $\MM (\Omega,\Phi)$ or $\MM_j (\Omega).$
\begin{prp}[Proposition 5.6 \cite{wmd2}]
If $\psia\in\MM (\Omega,\Phi)$ then $\sigma_i\psia \in
\MM (\Omega,\Phi).$
\end{prp}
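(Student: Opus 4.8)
The plan is to check directly that $\sigma_i\psia$ meets the two defining conditions of $\MM(\Omega,\Phi)=\mathcal{A}\otimes\M(\Omega,\Phi)$: that its dependence on $\s$ lies in $\mathcal{A}$, and that its dependence on the $a_k$ obeys \eqref{eq:18}. The first is almost immediate. By Proposition~\ref{prp:KubotaFE} the $P^{\norm{\alpha_i}^2}_{\eta m_i b_i}(s_i)$ are Dirichlet polynomials supported on $S$, hence lie in $\mathcal{A}$, which is itself stable under $\s\mapsto\sigma_i\s$ since $|\p_v|^{(\sigma_i\s)_j}$ equals $|\p_v|^{c(j,i)}$ times a monomial in the $|\p_v|^{s_k}$; and $\psia(\sigma_i\s;\,\cdot\,)$ still satisfies \eqref{eq:18} in the $a_k$ for every fixed $\s$. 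So the whole content lies in the transformation law, and I would focus there.

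First I would substitute $a_k\mapsto\varepsilon_k a_k$, with each $\varepsilon_k\in\Omega$, into \eqref{eq:19}. Applying \eqref{eq:18} to the factor $\psia(\sigma_i\s;\varepsilon_1 a_1,\dots,\varepsilon_r a_r)$ pulls out exactly the cocycle $\prod_k(\varepsilon_k,a_k)_S^{\norm{\alpha_k}^2}\prod_{k<l}(\varepsilon_k,a_l)_S^{2\langle\alpha_k,\alpha_l\rangle}$ demanded by \eqref{eq:18}, so the claim reduces to showing that the parenthesized prefactor in \eqref{eq:19} is invariant under $a_k\mapsto\varepsilon_k a_k$. The structural point is that $b_i=\prod_j a_j^{-c(j,i)}$ acquires the factor $\zeta=\prod_j\varepsilon_j^{-c(j,i)}\in\Omega$, so $b_i$ becomes $\zeta b_i$; reindexing the $\eta$-sum by $\eta\mapsto\eta\zeta^{-1}$ restores the argument $\eta m_i b_i$ of the $P$-polynomial. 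Splitting each Hilbert symbol multiplicatively and using the isotropy of $\Omega$ recalled in Section~\ref{sec:preliminaries} then kills every residual symbol of the form $(\zeta,\varepsilon_k)_S$.

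What survives this reduction are two terms: an interior factor $(\eta,W)_S$, with $W=\varepsilon_i^{\norm{\alpha_i}^2}\prod_{j>i}\varepsilon_j^{2\langle\alpha_i,\alpha_j\rangle}\in\Omega$, still tied to the summation variable, and a constant $(\zeta^{-1},A)_S$ with $A=a_i^{\norm{\alpha_i}^2}\prod_{j>i}a_j^{2\langle\alpha_i,\alpha_j\rangle}$. Showing these are jointly harmless is where I expect the real difficulty to lie: since $W$ and $\zeta$ are assembled from the $\varepsilon_k$ by different root-theoretic recipes, no formal cancellation is available, and one must use the fine arithmetic of the $P$-polynomials. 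The plan here is to expose the Gauss-sum content of $P^{\norm{\alpha_i}^2}_{\eta m_i b_i}$ and apply the unit-shift and multiplicativity relations for Gauss sums, together with the reciprocity law, recalled in Section~\ref{sec:preliminaries}, so as to convert the interior symbol $(\eta,W)_S$ into a power-residue symbol and ultimately into the constant $(\zeta,A)_S$, cancelling $(\zeta^{-1},A)_S$. This quasi-invariance of the $\eta$-sum under twisting by a symbol attached to an element of $\Omega$ is the multivariable shadow of the single-variable Kubota functional equation of Proposition~\ref{prp:KubotaFE}, in which the companion $\tilde\psia_\eta$ already lands in $\M_j(\Omega)$; propagating it through the cross-terms $\prod_{j>i}$ is the bookkeeping I would be most careful about, and I would organize the argument by isolating the $i$-th variable and treating the others as spectators via \eqref{eq:18}, following \cite{wmd2,wmd4}.
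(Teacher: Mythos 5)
First, a point of comparison: this paper does not prove the proposition at all; it is imported verbatim from \cite{wmd2} (Proposition 5.6 there), so the only ``proof'' in the paper is the citation. Judged on its own terms, your formal skeleton is correct: the $\mathcal{A}$-stability observation is fine, and the reduction---substitute $a_k\mapsto\varepsilon_k a_k$, pull the cocycle of \eqref{eq:18} out of $\psia(\sigma_i\s;\,\cdot\,)$, absorb $b_i\mapsto\zeta b_i$ by reindexing $\eta\mapsto\eta\zeta^{-1}$, kill the symbols $(\zeta^{-1},\varepsilon_k)_S$ by isotropy of $\Omega$---is exactly how a direct verification must begin, and your identification of the two survivors, the interior factor $(\eta,W)_S$ and the constant $(\zeta^{-1},A)_S$, is accurate.

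The trouble is that your proof stops exactly where the proposition starts. The cancellation of $(\eta,W)_S$ against $(\zeta^{-1},A)_S$ \emph{is} the content of the statement, and what you offer for it is a plan, not an argument---and a plan that cannot be executed with anything available in this paper. The coefficients $P^{j}_{a\eta}(s)$ enter only through Proposition \ref{prp:KubotaFE}, as unspecified Dirichlet polynomials supported on the places in $S$; the Remark immediately following that proposition states that their precise nature is ``unclear and not needed for what follows.'' The Gauss-sum identities of Section \ref{sec:preliminaries} concern $g(a,c;\epsilon^t)$ with modulus $c\in\OO_S$ and give no purchase on the $P$'s, and reciprocity relates residue symbols to Hilbert symbols of \emph{fixed} arguments; neither supplies a mechanism for turning $(\eta,W)_S$, where $\eta$ is the summation variable running over $F_S^\times/F_S^{\times n}$, into a constant. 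What is actually required is an arithmetic property of the map $c\mapsto P^{j}_{c}(s)$ relative to the isotropic subgroup $\Omega$ (in effect, control of $P^{j}_{c\varepsilon}$ for $\varepsilon\in\Omega$ and of the support of $P$, which is what makes $(\eta,W)_S$ constant over the terms that actually contribute and equal to the symbol cancelling $(\zeta^{-1},A)_S$). That input is established in \cite{wmd2}, resting on the construction of the $P$'s in \cite{bb}, but it is nowhere stated in this paper; and your own observation that $W$ and $\zeta$ are assembled from different subsets of the $\varepsilon_k$ already shows that no purely formal manipulation of bilinearity, isotropy, and reindexing can close the gap.
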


We can now restate the functional equation of Proposition
\ref{prp:KubotaFE} in a slightly cleaner form.

\begin{prp}[Lemma 5 \cite{wmd4}]\label{prp:KubotaFE2}
  Given $\psia\in \MM(\Omega, \Phi)$ we have
\begin{align*}\D^*(s_i, m_ib_i;&\psia_i^{(a_1,\ldots, a_r)}, 
\epsilon^{\norm{\alpha_i}^2}) \\
&=\nrm{m_ib_i}^{1-s_i}
\D^*(2-s_i, m_ib_i; (\sigma_i\psia)_i^{(a_1,\ldots, a_r)}, 
\epsilon^{\norm{\alpha_i}^2})
\end{align*}
for $b_i=\prod_i a_i^{-2\langle \alpha_i, \alpha_j\rangle/ 
\langle \alpha_i, \alpha_i\rangle}.$
\end{prp}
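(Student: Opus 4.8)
The plan is to derive this proposition from the single-variable Kubota functional equation of Proposition~\ref{prp:KubotaFE} together with the bookkeeping that the $W$-action \eqref{eq:19} was built to encode. Set $\phi = \psia_i^{(a_1,\ldots,a_r)}$ for brevity. By the preceding lemma (Lemma 5.3 of \cite{wmd2}) we have $\phi \in \MM_{\norm{\alpha_i}^2}(\Omega)$, so $\phi$ is a legitimate argument for the completed Kubota series with character $\epsilon^{\norm{\alpha_i}^2}$. First I would apply Proposition~\ref{prp:KubotaFE} with $s = s_i$, $a = m_ib_i$, $j = \norm{\alpha_i}^2$, and $\psia$ replaced by $\phi$, which gives directly
\[
\D^*(s_i, m_ib_i; \phi, \epsilon^{\norm{\alpha_i}^2}) = \nrm{m_ib_i}^{1-s_i}\sum_{\eta} P^{\norm{\alpha_i}^2}_{m_ib_i\eta}(s_i)\,\D^*(2-s_i, m_ib_i; \tilde\phi_\eta, \epsilon^{\norm{\alpha_i}^2}),
\]
the sum running over $\eta \in F_S^\times/F_S^{\times n}$.

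Comparing this with the asserted functional equation, and using that $\D^*$ is linear in its $\psia$-slot while the Dirichlet polynomials $P^{\norm{\alpha_i}^2}_{m_ib_i\eta}(s_i)$ lie in $\mathcal{A}$, I would reduce the whole statement to the single identity in $\MM_{\norm{\alpha_i}^2}(\Omega)$,
\[
\sum_{\eta} P^{\norm{\alpha_i}^2}_{m_ib_i\eta}(s_i)\,\tilde\phi_\eta = (\sigma_i\psia)_i^{(a_1,\ldots,a_r)}.
\]
This is exactly the identity that \eqref{eq:19} was designed to encode: the prefactor $\sum_\eta P^{\norm{\alpha_i}^2}_{\eta m_ib_i}(s_i)(\eta,a_i)_S^{\norm{\alpha_i}^2}\prod_{j>i}(\eta,a_j)_S^{2\langle\alpha_i,\alpha_j\rangle}$ appearing in the definition of $\sigma_i\psia$ is precisely the Kubota twisting sum, decorated with the Hilbert symbols needed to keep the result inside the correct space.

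To finish I would unwind the three defining formulas simultaneously — the slice \eqref{eq:10}, the action \eqref{eq:19}, and the twist $\tilde\phi_\eta(c) = (\eta,c)^{\norm{\alpha_i}^2}\phi(\eta^{-1}c^{-1})$ from \eqref{def:Psieta} — and check that the two sides agree as functions of the slot variable $c$. The hard part, and really the only substantive work, is the Hilbert symbol bookkeeping. The key interaction to track is that forming the slice of $\sigma_i\psia$ via \eqref{eq:10} evaluates $\sigma_i\psia$ at $(a_1,\ldots,a_ic,\ldots,a_r)$, so the factor $(\eta,a_i)_S^{\norm{\alpha_i}^2}$ in \eqref{eq:19} expands as $(\eta,a_i)_S^{\norm{\alpha_i}^2}(\eta,c)_S^{\norm{\alpha_i}^2}$; the extra $(\eta,c)_S^{\norm{\alpha_i}^2}$ is exactly the symbol supplied by $\tilde\phi_\eta$, while the remaining symbols cancel against those introduced by the slice operator. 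Here one uses the bilinearity of $(\,\cdot\,,\,\cdot\,)_S$, the fact that the monomial $b_i$ is built from the $a_j$ with $j\neq i$ and so is unaffected by the shift $a_i \mapsto a_ic$ (whence the polynomials $P^{\norm{\alpha_i}^2}_{m_ib_i\eta}$ agree on the two sides), and the observation that every function in sight is well-defined only modulo $F_S^{\times n}$, so that $\eta$ may be treated as a coset representative and the inversion $c^{-1}$ in $\tilde\phi_\eta$ causes no trouble against the transformation law \eqref{eq:18}. Since no idea beyond this symbol matching is required, the result follows, consistent with its attribution as Lemma 5 of \cite{wmd4}.
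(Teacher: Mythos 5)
You should note first that the paper itself does not prove this proposition: it is imported, with attribution, as Lemma 5 of \cite{wmd4}, exactly as Proposition \ref{prp:KubotaFE} is imported from \cite{ku1,ku2,bb} and Lemma 5.3 and Proposition 5.6 from \cite{wmd2}. So there is no internal proof to compare against; the comparison is with the argument in the cited literature, and your skeleton --- apply Proposition \ref{prp:KubotaFE} to the slice $\phi=\psia_i^{(a_1,\ldots,a_r)}$ (legitimate by Lemma 5.3), then try to identify $\sum_\eta P^{\norm{\alpha_i}^2}_{m_ib_i\eta}(s_i)\,\tilde\phi_\eta$ with $(\sigma_i\psia)_i^{(a_1,\ldots,a_r)}$ --- is indeed the natural one.

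The gap is your claim that this identification is a pointwise identity of functions, settled by Hilbert-symbol bookkeeping; it is not, and the reason you give for dismissing the inversion is where the argument breaks. Unwinding \eqref{def:Psieta} and \eqref{eq:10}, the value $\tilde\phi_\eta(c)$ involves $\psia(\s;a_1,\ldots,a_i\eta^{-1}c^{-1},\ldots,a_r)$, whereas \eqref{eq:19} makes $(\sigma_i\psia)_i^{(a_1,\ldots,a_r)}(c)$ a multiple of $\psia(\sigma_i\s;a_1,\ldots,a_ic,\ldots,a_r)$. The $i$-th arguments differ by $\eta^{-1}c^{-2}$, which for generic $c$ does not lie in $\Omega=\OO_S^{\times}F_S^{\times n}$ once $n>2$ (and even when the inversion is harmless, as for $n=2$, the residual shift by $\eta\notin\Omega$ remains). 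The transformation law \eqref{eq:18} constrains $\psia$ only along $\Omega$-orbits, so it gives no relation at all between these two values: being well-defined modulo $F_S^{\times n}$ is a property of the cosets $\eta$, not of $\psia$, whose values at $c$ and at $\eta^{-1}c^{-1}$ are independent data. There is also a second mismatch you never address, $\s$ on one side versus $\sigma_i\s$ on the other, which is material precisely because $\MM=\mathcal{A}\otimes\M$ permits $\s$-dependence. What is actually true --- and is the real content of Lemma 5 of \cite{wmd4} --- is the weaker statement that the two functions have the same completed series $\D^*(2-s_i,m_ib_i;\,\cdot\,,\epsilon^{\norm{\alpha_i}^2})$: an identity of Dirichlet series that holds only after summing against the Gauss-sum coefficients $g(m_ib_i,c;\epsilon^{\norm{\alpha_i}^2})$, and whose proof requires the support and vanishing properties of those Gauss sums (manipulations of the kind carried out in Section \ref{sec:kubota}), not merely bilinearity of $(\,\cdot\,,\,\cdot\,)_S$. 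So the step you describe as the only substantive work cannot be carried out as stated, and the proposal does not close.
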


We now turn to the main result of this section.  Let ${\bf a}$ be the
$(r-1)$-tuple $ (a_1,\ldots, \hat a_i, \ldots,a_r)$, where the hat on 
$a_i$ indicates that this entry is omitted. Let
$\psia\in \MM(\Omega, \Phi).$ Introduce the Dirichlet series
\begin{equation}
  \label{eq:8}\begin{split}
  \E(s_i,{\bf a};\bfm, \psia,i)= 
%\qquad\qquad\qquad\qquad\\
\sum_{0\neq a_i\in \OO_S/\OO_S^\times}
\frac{H(a_1,\ldots,  a_i, \ldots,a_r ;
{\bf m})\psia(a_1,\ldots,  a_i, \ldots,a_r)}
{\nrm{a_i}^{s_i}}.
\end{split}
\end{equation}
Let $m=m (\alpha)$
and define 
\begin{equation}
  \label{eq:9}
\E^*(s_i,  {\bf a};{\bf m},\psia,i)=\zeta(ms_i-m+1)
G_m(s_i)\E(s_i, {\bf a};{\bf m},\psia,i).
\end{equation}

\begin{theorem}\label{thm:FEa}
  Let $A=\prod_{j\neq i} a_j^{-2\langle \alpha_i,
    \alpha_j\rangle/\norm{\alpha_i}^2}.$ 
%and let $A_0$ be the $m^{th}$ powerfree part of $A.$   
Then 
$$\E^*(s_i,  {\bf a};{\bf m},\psia,i)=\nrm {Am_i}^{1-s_i} 
\E^*(2-s_i,  {\bf a};{\bf m},\sigma_i\psia,i).$$   
\end{theorem}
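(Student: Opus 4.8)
The plan is to write $\E$ as a finite linear combination of products of Kubota Dirichlet series $\D$ and the one-variable local factors $f^{(\p;{\bf k})}$ introduced in Section~\ref{section:new}, and then to read off the functional equation of $\E^*$ from the functional equation of Kubota's series (Proposition~\ref{prp:KubotaFE2}) together with the monomial symmetry of the $f^{(\p;{\bf k})}$ (Theorem~\ref{thm:f}). The unifying observation is that, under the substitution $x=q^{-s_i}$ with $q=\nrm{\p}$, the reflection $s_i\mapsto 2-s_i$ becomes $x\mapsto 1/(q^2x)$, which is precisely the argument appearing in Theorem~\ref{thm:f}; moreover the denominator $1-\nrm{\p}^{m-1}x^m$ of $f^{(\p;{\bf k})}$ becomes $1-\nrm{\p}^{m-1-ms_i}$, matching the denominators produced when one strips a prime from a restricted Kubota series in Lemma~\ref{lemma:dp}.

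First I would use the twisted multiplicativity \eqref{eq:3} and the relation \eqref{eq:5} to factor the coefficient $H(a_1,\dotsc,a_i,\dotsc,a_r;\bfm)$ along the prime factorization of $a_i$, the entries $a_j$ ($j\neq i$) being held fixed. Writing $a_i=\prod_\p\p^{k_i(\p)}$, the $a_i$-dependence of $H$ becomes a product over $\p$ of prime-power coefficients $H(\p^{k_1},\dotsc,\p^{k_r};\p^{l_1},\dotsc,\p^{l_r})$---each a Taylor coefficient of $N(\x;\ell)$, with $i$-th exponent $k_i=k_i(\p)$---times Hilbert-symbol factors recording the interaction between distinct primes and between $a_i$ and the fixed data $(a_j)_{j\neq i}$ and $\bfm$. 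Grouping the sum over $a_i$ prime by prime in the variable $\nrm{a_i}^{-s_i}$, the contribution of each $\p$ is governed by the generating function $N^{(\p;{\bf k})}(x;\bfm,\alpha_i)$ of \eqref{eq:24} evaluated at $x=q^{-s_i}$.

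Next I would separate out the Gauss-sum content. For the primes dividing neither $m_i$ nor the $a_j$, the local generating function takes the universal shape given by \eqref{eq:21}, and these factors reassemble, via the multiplicativity and reciprocity of Gauss sums recalled in Section~\ref{sec:preliminaries}, into a single Kubota series $\D(s_i,m_ib_i;\psia^{\dagger},\epsilon^{\norm{\alpha_i}^2})$ built from a suitably twisted $\psia^{\dagger}$. The finitely many remaining ``ramified'' primes contribute exactly the factors $f^{(\p;{\bf k})}$; here Lemma~\ref{lemma:dp} is used to rewrite each restricted series $\D_{\p a_1}$ in terms of $\D_{a_1}$, and the denominator match noted above lets the $1-\nrm{\p}^{m-1}x^m$ in $f^{(\p;{\bf k})}$ cancel cleanly against the Euler factors of $\zeta(ms_i-m+1)$. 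Multiplying by $\zeta(ms_i-m+1)G_m(s_i)$ as in \eqref{eq:9} then completes the Kubota piece to $\D^*$ and produces an identity of the schematic form
\[
\E^*(s_i,{\bf a};\bfm,\psia,i)=\sum\Bigl(\prod_\p f^{(\p;{\bf k})}(q^{-s_i};\bfm,\alpha_i)\Bigr)\,\D^*(s_i,m_ib_i;\psia^{\dagger},\epsilon^{\norm{\alpha_i}^2}),
\]
a finite sum indexed by the ramified data.

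Finally I would apply the two functional equations term by term. Proposition~\ref{prp:KubotaFE2} sends $\D^*(s_i,m_ib_i;\psia^{\dagger})$ to $\nrm{m_ib_i}^{1-s_i}\D^*(2-s_i,m_ib_i;\sigma_i\psia^{\dagger})$, and since $b_i=A$ this already supplies the prefactor $\nrm{Am_i}^{1-s_i}$ demanded by the theorem; Theorem~\ref{thm:f} simultaneously turns each $f^{(\p;{\bf k})}(q^{-s_i})$ into $f^{(\p;{\bf k})}(q^{-(2-s_i)})$ up to the monomial $(qx)^{l_i+1-\delta}$. The main obstacle is the bookkeeping at this last stage: one must verify that the accumulated local monomials from Theorem~\ref{thm:f}, together with the twisted reflection $\sigma_i\psia^{\dagger}$ delivered by Kubota, reorganize into precisely the decomposition of $\E^*(2-s_i,{\bf a};\bfm,\sigma_i\psia,i)$---equivalently, that the operation $\psia\mapsto\psia^{\dagger}$ of passing to the local decomposition intertwines the global $W$-action \eqref{eq:19} with the local symmetry of Theorem~\ref{thm:f}. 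Reconciling these two notions of ``reflection'' is the crux; it is here that the normalization of $\gamma(i)$ in \eqref{eq:gstar} and the reciprocity law are indispensable, and that the Dirichlet polynomials $P^{\norm{\alpha_i}^2}$ appearing in \eqref{eq:19} must be matched against the Gauss-sum factors generated in the decomposition.
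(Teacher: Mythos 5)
Your strategy is the same as the paper's: expand $H$ by twisted multiplicativity, recognize the part of $a_i$ coprime to the bad primes as a Kubota series (using \eqref{eq:21}), strip the bad primes via Lemma~\ref{lemma:dp}, and conclude with Proposition~\ref{prp:KubotaFE2} and Theorem~\ref{thm:f}; you have also correctly located the difficulty in reconciling the local and global reflections. But the step you set aside as ``bookkeeping'' is not mere bookkeeping --- it is where the decomposition you assert actually gets produced. Lemma~\ref{lemma:dp} does not ``contribute exactly the factors $f^{(\p;{\bf k})}$'': stripping one prime $\p_1$ replaces each restricted series $\D_T$ by a linear combination of \emph{two} series of type $\D_{T\smallsetminus\{\p_1\}}$, with different twists and different functions ($\theta^{(\p_1^{k_1})}$ versus a Gauss-sum multiple of $\hat\theta_{\eta}^{(\p_1^{k_1})}$), each still multiplied by a polynomial $N^{(\p_1;\cdot)}$, not by $f^{(\p_1;\cdot)}$. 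The two-term numerator $N^{(\p;{\bf k})}-\gamma(\cdot)(qx)^{m-\delta}N^{(\p;{\bf k}')}$ defining $f^{(\p;{\bf k})}$ in \eqref{eq:25} arises only after one reindexes the finite sum by $k_1\mapsto (l_1+1-k_1)_m$, so that the second term produced at index $k_1$ sits over the \emph{same} Kubota series as the first term at the partner index, and then verifies an explicit power-residue/Hilbert-symbol identity (the paper's Lemma~\ref{lemma:Ck1}) showing that the symbol factors $C(k_1)$ and the twisted functions $\hat\theta_{\eta'}$ match under this pairing. Your ``denominator match'' explains only the factor $1-\nrm{\p}^{m-1-ms_1}$; it cannot produce the pairing, and without the pairing there is no identity of the schematic form you display.

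Second, the claim that Proposition~\ref{prp:KubotaFE2} ``already supplies the prefactor $\nrm{Am_i}^{1-s_i}$ since $b_i=A$'' is inaccurate. In the decomposition \eqref{eq:15} the Kubota series carry the $k$-dependent twists $\p_1^{(l_1-2k_1)_m}\cdots\p_V^{(l_V-2k_V)_m}$, not $Am_i=\p_1^{l_1}\cdots\p_V^{l_V}$, so their functional equations produce $k$-dependent powers of the $q_u$; the uniform factor $\nrm{Am_i}^{1-s_i}$ of the theorem emerges only when these are combined, term by term across the reindexed sum, with the monomials $(qx)^{l_i+1-\delta}$ coming from Theorem~\ref{thm:f}. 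In other words the two functional equations are entangled: neither alone yields the stated prefactor, and checking that their monomial contributions assemble to $\nrm{Am_i}^{1-s_i}$ times the decomposition of $\E^*(2-s_i,{\bf a};\bfm,\sigma_i\psia,i)$ is precisely the content your proposal still leaves unsupplied.
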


\begin{proof}
To simplify notation, assume that $i=1.$ Let $\p_1,\ldots,
  \p_V$ be the prime divisors of $a_2\cdots a_rm_1\cdots m_r,$ with
  $\nrm{\p_j}=q_j.$ Let $T=\{\p_1,\ldots, \p_V\}.$ Write
  $a_j=\p_1^{\beta_{j1}}\cdots \p_V^{\beta_{jV}}$ for $j=2,\ldots r$
  and $Am_1=\p_1^{l_1}\ldots\p_V^{l_V}.$ If $a_1$ has no divisors 
in $T$  and $a_1'=\p_1^{\beta_{11}}\cdots \p_V^{\beta_{1V}}$, we 
expand the  $H$-coefficient
\begin{align*}
H&(a_1a_1',
a_2, \ldots, a_r;\bfm)=
 \prs{a_1'}{a_1}^{2\norm{\alpha_1}^2}
\prs{A}{a_1}^{-\norm{\alpha_1}^2}(a_1',a_1)_S^{\norm{\alpha_1}^2}
(A,a_1)_S^{-\norm{\alpha_1}^2}\\
&\times H(a_1, 1,\ldots, 1;\bfm)
H(\p_1^{\beta_{11}}\cdots \p_V^{\beta_{1V}},
\p_1^{\beta_{21}}\cdots \p_V^{\beta_{2V}},\ldots ,
\p_1^{\beta_{r1}}\cdots \p_V^{\beta_{rV}};\bfm).  
\end{align*}
By the multiplicativity relation (\ref{eq:5}) and Theorem
\ref{thm:invRatlFunc},  the first $H$-coefficient on the 
right-hand side is 
$$H(a_1, 1,\ldots, 1;\bfm)=g(1,a_1;\epsilon^{\norm{\alpha_1}^2})
\prs{m_1}{a_1}^{-\norm{\alpha_1}^2}.$$
We further expand the second $H$-coefficient on the right-hand side 
as  
\begin{align}
  \label{eq:12}
  H(&\p_1^{\beta_{11}}\cdots  \p_V^{\beta_{1V}},
\p_1^{\beta_{21}}\cdots \p_V^{\beta_{2V}},\ldots ,
\p_1^{\beta_{r1}}\cdots \p_V^{\beta_{rV}};\bfm)\\ \nn
&=
\Bigl(\prod_{\substack{1\leq j\leq r\\i\neq i'}}
  \prs{\p_i^{\beta_{ji}}} {\p_{i'}^{\beta_{ji'}}}^{\norm{\alpha_j}^2}
\Bigr)
\Bigl(\prod_{\substack{j<j'\\i\neq i'}} \prs{\p_i^{\beta_{ji}}}
{\p_{i'}^{\beta_{j'i'}}}^{2\langle \alpha_j,\alpha_{j'}\rangle}\Bigr)
\prod_{i=1}^V H(\p_i^{\beta_{1i}}, \ldots,\p_i^{\beta_{ri}};\bfm)
\end{align}

Therefore, up to a multiplicative factor of norm 1 coming from a
product of  power residue symbols, $ \E(s, {\bf
  a};{\bf m},\psia,1) $ is equal to 
\begin{align}
  \label{eq:13}
\sum_{\substack{(a, T)=1\\k_1, \ldots, k_V\geq 0}}
%\sum_{k_1, \ldots, k_V\geq 0}
\frac{g(1,a;\epsilon^{\norm{\alpha_1}^2})}
{\nrm{a}^{s_1} q_1^{k_1s_1}\cdots q_V^{k_Vs_1}} 
\prs{\p_1^{2k_1}\cdots\p_V^{2k_V}A^{-1}m_1^{-1} }
{a}^{\norm{\alpha_1}^2}
\psia_1^{(\p_1^{k_1}\cdots\p_V^{k_V},a_2,\dots, a_r)}(a_1)
\\ \nn  \times
\Bigl(\prod_{\substack{i\neq i'}}
 \prs{\p_i^{k_{i}}} {\p_{i'}^{k_{i'}}}^{\norm{\alpha_1}^2}\Bigr)
\Bigl(\prod_{\substack{2\leq j\leq r\\i\neq i'}} 
\prs{\p_i^{k_{i}}}
{\p_{i'}^{\beta_{ji'}}}^{2\langle \alpha_1,\alpha_j\rangle}\Bigr)
\prod_{i=1}^V H(\p_i^{k_{i}},\p_i^{\beta_{2i}},
\ldots,\p_i^{\beta_{ri}};\bfm)\\
\nn   =  \sum_{k_1, \ldots, k_V=0}^{m-1} 
\D_T(s_1,\p_1^{(l_1-2k_1)_m}\cdots \p_V^{(l_V-2k_V)_m};
\psia_1^{(\p_1^{k_1}\cdots\p_V^{k_V},a_2,\dots, a_r)},
\epsilon^{\norm{\alpha_1}^2})\\
\nn  \times\prod_{i=1}^V N^{(\p_i;k_i,\beta_{2i},\ldots,\beta_{ri})}
(q_i^{-s_1}; \bfm,\alpha_1)
\Bigl(\prod_{\substack{i\neq i'}}
  \prs{\p_i^{k_{i}}} {\p_{i'}^{k_{i'}}}^{\norm{\alpha_1}^2}
\Bigr)
\Bigl(\prod_{\substack{2\leq j\leq r\\i\neq i'}} 
\prs{\p_i^{k_{i}}}
{\p_{i'}^{\beta_{ji'}}}^{2\langle \alpha_1,\alpha_j\rangle}\Bigr).
\end{align}
%where, as in the previous section,
%\begin{equation}
%  \label{eq:14}
%  N^{(\p;k,\beta_{2},\ldots,\beta_{r})}
%(x;\bfm, \alpha_1)=\sum_{\beta\geq 0}H(\p^{k+m\beta}, \p^{\beta_{2}},
%\ldots, \p^{\beta_{r}};\bfm)x^{k+m\beta}. 
%\end{equation}

Let $\theta^{(\p_1^{k_{1}})}=\psia_1^{(\p_1^{k_1}\cdots\p_V^{k_V},a_2,\dots,
  a_r)} $
and denote by $C(k_1)=C(k_1, \ldots, k_r)$ the product of
residue symbols
\begin{equation}
  \Bigl(\prod_{\substack{i\neq i'}}
  \prs{\p_i^{k_{i}}} {\p_{i'}^{k_{i'}}}^{\norm{\alpha_1}^2}
\Bigr)
\Bigl(\prod_{\substack{2\leq j\leq r\\i\neq i'}} 
\prs{\p_i^{k_{i}}}
{\p_{i'}^{\beta_{ji'}}}^{2\langle \alpha_1,\alpha_j\rangle}\Bigr).
\end{equation}
Letting $K_i=(l_i-2k_i)_m$ for $i=i, \ldots, r$, we have by Lemma
\ref{lemma:dp} 
\begin{align}\label{eq:usedLemma412}
 &(1-|\p_1|^{m-1-ms_1}) \D_T(s_1,\p_1^{K_1}\cdots \p_V^{K_V};
\theta^{(\p_1^{k_1})},\epsilon^{\norm{\alpha_1}^2})C(k_1) =\\ \nn
& \D_{T-\{\p_1\}}(s_1, \p_1^{K_1}\cdots \p_V^{K_V};
\theta^{(\p_1^{k_1})},\epsilon^{\norm{\alpha_1}^2})C(k_1) 
-\delta^m_{K_1,-1}\frac{g(\p_1^{K_1}\cdots \p_V^{K_V}, \p_1^{K_1+1}; \epsilon^{\norm{\alpha_{1}}^{2}})}
{q_1^{(K_1+1)s_1}} \\ \nn
& \ \times \D_{T-\{\p_1\}}(s_1, \p_1^{(2k_1-l_1-2)_m}\cdots \p_V^{K_V};
\hat\theta_\eta^{(\p_1^{k_1})},\epsilon^{\norm{\alpha_1}^2})C(k_1),
\end{align}
where $\eta\sim\p_1^{K_1+1}$ and $\delta$ is defined after \eqref{eq:25}.  
Consider the second term on the right, with
$k_1$ replaced by $(l_1+1-k_1)_m$.  
For $\delta^m_{K_1,-1}\neq 0$ this gives  
\begin{equation}
  \label{eq:314}
  \begin{split}
  \frac{g(\p_1^{(2k_1-l_1-2)_m}\p_2^{K_2}\cdots \p_V^{K_V}, 
\p_1^{(2k_1-l_1-1)_m}; \epsilon^{\norm{\alpha_{1}}^{2}})}
{q_1^{(2k_1-l_1-1)_ms_1}} 
C(l_1-k_1+1) \\
\times \D_{T-\{\p_1\}}(s_1,\p_1^{K_1}\cdots \p_V^{K_V};
\hat\theta_{\eta'}^{(\p_1^{(l_1+1-k_1)_m})},
\epsilon^{\norm{\alpha_1}^2})
  \end{split}
\end{equation}
where $\eta'\sim\p_1^{2k_1-l_1-1}.$

The Gauss sum can be written as 
\begin{equation}
  \label{eq:315}
 \prs{\p_2^{K_2}\cdots \p_V^{K_V}}{\p_1^{2k_1-l_1-1}}^{-\norm{\alpha_1}^2} 
g(\p_1^{(2k_1-l_1-2)_m}, \p_1^{(2k_1-l_1-1)_m}; \epsilon^{\norm{\alpha_{1}}^{2}}).
\end{equation}
Keeping careful track of the Hilbert symbols, we get
\begin{lemma}\label{lemma:Ck1}
For $\eta'$ as above,
$$C(k_1)\theta^{(\p_1^{k_1})}=C(l_1-k_1+1)
\prs{m_1\p_2^{K_2}\cdots
  \p_V^{K_V}}{\p_1^{2k_1-l_1-1}}^{-\norm{\alpha_1}^2 }
\hat\theta^{(\p_1^{(l_1+1-k_1)_m})}_{\eta'}.$$
\end{lemma}

\begin{proof}[Proof of Lemma \ref{lemma:Ck1}] Define $l_i'$ by
$\prod_{2\leq j\leq r} \p_i^{-2\langle \alpha_1, \alpha_j\rangle
\beta_{ji}}=\p_i^{l_i'}.$ We have
\begin{align}
  \label{eq:17}
  \frac{C(k_1)}{C(l_1-k_1+1)}&=\frac{
\prs{\p_1^{k_1}} {\p_2^{k_2}\cdots \p_V^{k_V}}^{\norm{\alpha_1}^2} 
\prs {\p_2^{k_2}\cdots \p_V^{k_V}}{\p_1^{k_1}}^{\norm{\alpha_1}^2} 
\prs{\p_1^{k_1}} {\p_2^{l_2'}\cdots \p_V^{l_V'}}^{-\norm{\alpha_1}^2} 
}
{\prs{\p_1^{l_1-k_1+1}} 
  {\p_2^{k_2}\cdots \p_V^{k_V}}^{\norm{\alpha_1}^2} 
\prs {\p_2^{k_2}\cdots \p_V^{k_V}}
   {\p_1^{l_1-k_1+1}}^{\norm{\alpha_1}^2} 
\prs{\p_1^{l_1-k_1+1}} 
   {\p_2^{l_2'}\cdots \p_V^{l_V'}}^{-\norm{\alpha_1}^2} } 
\\ \nn
&\hspace{-.4in}= \prs {\p_1^{2k_1-l_1-1}}
{\p_2^{2k_2-l_2'}\cdots  \p_V^{2k_V-l_V'}}^{\norm{\alpha_1}^2}
  (\p_1^{2k_1-l_1-1}, \p_2^{k_2}
\cdots \p_V^{k_V})^{\norm{\alpha_1}^2}\\ \nn
&\hspace{-.4in}= \prs{\p_2^{2k_2-l_2'}\cdots  \p_V^{2k_V-l_V'}}
{\p_1^{2k_1-l_1-1}}^{\norm{\alpha_1}^2} 
(\p_1^{2k_1-l_1-1}, \p_2^{l_2'-k_2}
\cdots \p_V^{l_V'-k_V})^{\norm{\alpha_1}^2}
\end{align}
Further, $\theta^{(\p_1^{k_1})}/
\hat\theta^{(\p_1^{(l_1+1-k_1)_m})}_{\eta'}=
(\p_1^{2k_1-l_1-1},\p_2^{k_2-l_2'}\cdots\p_V^{k_V-l_V'})^{
\norm{\alpha_1}^2}.$
Thus 
\begin{equation}
  \label{eq:22}
  \begin{split}
 \frac{C(k_1)\theta^{(\p_1^{k_1})}}
{C(l_1-k_1+1)\hat\theta^{(\p_1^{(l_1+1-k_1)_m})}_{\eta'}}
=& \prs{\p_2^{2k_2-l_2'}\cdots  \p_V^{2k_V-l_V'}}
{\p_1^{2k_1-l_1-1}}^{\norm{\alpha_1}^2}\\ 
=&
\prs{m_1\p_2^{K_2}\cdots  \p_V^{K_V}}
{\p_1^{2k_1-l_1-1}}^{-\norm{\alpha_1}^2}
.   
  \end{split}
\end{equation}

\end{proof}

We now continue from the last line of (\ref{eq:13}).  Use
(\ref{eq:usedLemma412}) to write $\D_T$ as a linear combination of two
Kubota series of type $\D_{T-\{\p_1\}}$ and replace $k_1$ by
$(l_1+1-k_1)_m$ in the second of these.  Then we use Lemma
\ref{lemma:Ck1} to put the two Kubota series back together and find
that, up to a product of residue symbols, $ \E(s_1, {\bf a};{\bf
  m},\psia,1) $ is
\begin{align}
  \label{eq:11}
     &\hspace{-.4in}  \sum_{k_1, \ldots, k_V=0}^{m-1} 
\D_{T-\p_1}(s_1, \p_1^{(l_l-2k_1)_m}\cdots \p_V^{(l_V-2k_V)_m};
\theta^{(\p_1^k)},
\epsilon^{\norm{\alpha_1}^2})C(k_1,\ldots, k_r)\\
\nn &\times f^{(\p_1;k_1,\beta_{21},\ldots,\beta_{r1})}
(q_1^{-s_1}; \bfm, \alpha_1)  \prod_{i=2}^V
N^{(\p_i;k_i,\beta_{2i},\ldots,\beta_{ri})} (q_i^{-s_1};\bfm,
\alpha_1).
\end{align}
Repeating this process for $\p_2, \ldots, \p_r$ in order to remove the
remaining primes from $T,$ we arrive at
\begin{align}
  \label{eq:15}
  \E(s_1, {\bf a};{\bf m},&\psia,1)  \\
= \xi& \sum_{k_1, \ldots, k_V=0}^{m-1} 
\D(s_1,\p_1^{(l_1-2k_1)_m}\cdots \p_V^{(l_V-2k_V)_m};
\psia_1^{(\p_1^{k_1}\cdots\p_V^{k_V},a_2,\dots, a_r)},
\epsilon^{\norm{\alpha_1}^2})\\
\nn &\times C(k_1,\ldots, k_r)\prod_{i=1}^V
f^{(\p_i;k_i,\beta_{2i},\ldots,\beta_{ri})} (q_i^{-s_1};\bfm ,\alpha_1),
\end{align}
for $\xi$ a product of residue symbols.  The theorem is now a
consequence of the functional equation of Kubota's Dirichlet series
given in Proposition \ref{prp:KubotaFE2}, together with the functional
equation of $f$ given in Theorem \ref{thm:f}.
\end{proof}

\section{The multiple Dirichlet series}\label{sec:wmds}

Given $\psia\in  \MM(\Omega, \Phi)$ and ${\bf m}=(m_1, \ldots m_r)$ 
an $r$-tuple of nonzero integers in $\OO_S,$ we define the  
multiple Dirichlet series in $r$ complex variables
\begin{equation}
  \label{eq:wmddef}
  Z(\s;{\bf m},\psia)=\sum_{0\neq c_1,\ldots, c_r\in \OO_S/ \OO_{S}^{\times }} 
\frac{H(c_1,\ldots, c_r;{\bf m})
    \psia(c_1,\ldots, c_r)} {\prod |c_i|^{s_i}} .
\end{equation}
This is initially defined for $\s=(s_1, \ldots , s_n)$ an $r$-tuple 
of complex numbers with $\re(s_i)> 3/2.$   
To describe the functional equations satisfied by $Z$, we need to
introduce some Gamma factors.  For any positive root $\alpha \in
\Phi^{+}$, write $\alpha = \sum k_{i}\alpha_{i}$ as in
\eqref{eqn:repn}.  Define 
\[
\zeta_{\alpha} (s) = \zeta \bigl(1+m (\alpha)\sum_{i=1}^{r} k_{i} (s_{i}-1)\bigr),
\]
where $\zeta$ is the Dedekind zeta function of $F$, and define 
\[
G_{\alpha} (s) = G_{m (\alpha)}\bigl(1/2+\sum_{i=1}^{r}k_{i} (s_{i}-1)/2\bigr),
\]
where $G_{m}(s)$ is defined in \eqref{eqn:defofG}. 
Finally we put 
\[
Z^*(\s;\bfm,\psia) = Z(\s;\bfm,\psia)\prod_{\alpha >0} G_{\alpha} (s)\zeta_{\alpha} (s),
\]
and we can now state our main result:

\begin{theorem}\label{thm:maintheorem}
Let $\psia\in \MM(\Omega, \Phi).$  The function $Z(\s;\bfm,\psia)$ has
a meromorphic continuation to $\s\in\C^r.$  Moreover
$Z(\s;\bfm,\psia)$ satisfies a group of functional equations under
$W,$ the Weyl group of $\Phi.$  For the simple reflection $\sigma_i$ we
have
\begin{equation}
  \label{eq:20}
  Z^*(\s;\bfm,\psia)=\nrm{m_i}^{1-s_i}
Z^*(\sigma_i\s;\bfm,\sigma_i\psia). 
\end{equation}
The set of  polar hyperplanes is  contained in the 
 $W$-translates of the hyperplanes 
$s_i=1\pm 1/\gcd(n,\norm{\alpha_i}^{2}).$ 
\end{theorem}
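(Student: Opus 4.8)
The plan is to deduce the functional equation and meromorphic continuation of the multivariable series $Z^*(\s;\bfm,\psia)$ from the single-variable results of Section~\ref{sec:kubota}, following the now-standard strategy recalled in the introduction. The first step is to write $Z(\s;\bfm,\psia)$ as an iterated sum: fix the variable $s_i$ and group together all terms with the remaining variables held fixed. Concretely, one separates off the sum over $c_i$ while treating $(c_1,\dotsc,\widehat{c_i},\dotsc,c_r)$ as the auxiliary data $\mathbf{a}$ appearing in \eqref{eq:8}. With this bookkeeping, the inner sum over $c_i$ is exactly the Dirichlet series $\E(s_i,\mathbf{a};\bfm,\psia,i)$ defined in \eqref{eq:8}. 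Thus $Z$ is a sum over the auxiliary variables of building blocks $\E$, and multiplying by the appropriate product of $G_\alpha\zeta_\alpha$ factors converts $\E$ into $\E^*$.

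The second step is to invoke Theorem~\ref{thm:FEa}, the functional equation
\[
\E^*(s_i,\mathbf{a};\bfm,\psia,i)=\nrm{Am_i}^{1-s_i}\,\E^*(2-s_i,\mathbf{a};\bfm,\sigma_i\psia,i),
\]
and show that applying it termwise to each building block reassembles into the claimed functional equation \eqref{eq:20} for $Z^*$. The delicate point here is matching up the norm factors and the twisted shifts: the factor $\nrm{Am_i}^{1-s_i}$ must combine with the change in $\psia$ (to $\sigma_i\psia$) and the reindexing of the auxiliary sum so that the total is precisely $\nrm{m_i}^{1-s_i}$ times $Z^*(\sigma_i\s;\bfm,\sigma_i\psia)$. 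This is where the definitions of the $W$-action on $\MM(\Omega,\Phi)$ in \eqref{eq:19} and the auxiliary quantity $A=\prod_{j\neq i}a_j^{-2\langle\alpha_i,\alpha_j\rangle/\norm{\alpha_i}^2}$ do the work, since $A$ encodes exactly the difference between the single-variable shift $s_i\mapsto 2-s_i$ and the full Weyl action $\sigma_i\s$. One verifies that the $A$-dependent factors, when summed over the auxiliary indices after the substitution, exactly reproduce the action of $\sigma_i$ on the remaining variables $s_j$ via $(\sigma_i\s)_j=s_j-c(j,i)(s_i-1)$.

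The third step is meromorphic continuation. Each $\E^*$ inherits from Proposition~\ref{prp:KubotaFE} a meromorphic continuation in $s_i$ to all of $\C$, with poles only along $s_i=1\pm 1/m(\alpha_i)$; since $m(\alpha_i)=n/\gcd(n,\norm{\alpha_i}^2)$, but the relevant single-variable parameter is $\gcd(n,\norm{\alpha_i}^2)$ after accounting for the normalization, the poles land on $s_i=1\pm 1/\gcd(n,\norm{\alpha_i}^2)$. This gives continuation of $Z^*$ to the tube region where $\re(s_i)>3/2$ for all $i$ together with its $\sigma_i$-reflections, i.e.\ to a union of translated half-spaces. The functional equations \eqref{eq:20} generate a $W$-action under which $Z^*$ is (up to explicit factors) invariant, so one obtains continuation to the $W$-orbit of the initial region of convergence. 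The final step, and the main obstacle, is the convexity argument: one must show that the $W$-translates of the region of absolute convergence, together with the functional equations, cover enough of $\C^r$ that Bochner's tube theorem (or the explicit convexity argument used in \cite{qmds,wmd3,wmd1,wmd2,nmdsA2}) yields continuation to all of $\C^r$. The hard part is checking that the reflected regions overlap in an open set and that their convex hull is all of $\C^r$; this is precisely the step that requires the full group of functional equations rather than a single one, and it is where one tracks the polar hyperplanes to conclude they lie in the $W$-translates of $s_i=1\pm 1/\gcd(n,\norm{\alpha_i}^2)$.
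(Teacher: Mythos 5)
Your proposal is correct and follows essentially the same route as the paper's own proof: fix the indices $c_j$, $j\neq i$, so that the inner sum over $c_i$ is exactly $\E(s_i,\mathbf{a};\bfm,\psia,i)$, apply Theorem \ref{thm:FEa} termwise (the factor $\nrm{A}^{1-s_i}$ indeed converts $\prod_{j\neq i}\nrm{a_j}^{-s_j}$ into $\prod_{j\neq i}\nrm{a_j}^{-(\sigma_i\s)_j}$, a bookkeeping step the paper dismisses as ``immediate''), and then invoke Bochner's tube principle for the continuation, which the paper likewise delegates to \cite{qmds} and \cite{wmd2}.
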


\begin{proof}
We will show that the functional equations are valid, whenever both
sides of (\ref{eq:20}) are defined. Given this, the meromorphic
continuation of $Z^*$ and the validity of the functional equations for
all $\s\in\C^r$ away from the polar hyperplanes is a routine
consequence of the Bochner's tube principle as in \cite{qmds} or
\cite{wmd2}.  We refer the reader to these papers for the details.

To prove the $\sigma_i$ functional equation, fix $c_{j}$ for $i\not
=j$ and consider the sum over the
$i^{th}$ index in the series defining $Z(\s;\bfm,\psia)$: 
\begin{equation}
  \label{eq:26}
  \sum_{0\neq c_i\in \OO_S/\OO_{S}^{\times }} 
\frac{H(c_1,\ldots, c_r;{\bf m})
\psia(c_1,\ldots, c_r)} {|c_i|^{s_i}}.
\end{equation}
This is the function $\E(s_i,{\bf c};\bfm, \psia,i)$ of \eqref{eq:8},
which satisfies the functional equation of Theorem \ref{thm:FEa}. The
functional equation of $Z^*(\s;\bfm,\psia)$ under $\sigma_i$ is now
immediate.
\end{proof}
 
\section{Examples and comments}\label{sec:examples}

We list some examples of the Weyl group multiple Dirichlet series
constructed in this paper.  In this section, we will simplify the
exposition by ignoring such complications as the exact form of
reciprocity, Hilbert symbols, and the use of $S$-integers instead of
integers.

\subsection{Quadratic series} In \cite{qmds} we gave a list of
examples of quadratic Weyl group multiple Dirichlet series associated
to simply-laced root system.  In summary, the $A_2, A_3$ and $D_4$
series can be used to study the first, second and third moments of
quadratic Dirichlet $L$-functions respectively, as in
Goldfeld--Hoffstein \cite{gh}, Fisher--Friedberg \cite{ff1,ff2} or
Diaconu--Goldfeld--Hoffstein \cite{dgh}. The first named author used the
$A_5$ quadratic series to study mean values of zeta functions of
biquadratic fields in \cite{chbq}.

%\subsection{Quadratic series, cont.}
In \cite{bfh-padic-whittaker} Bump, Friedberg, and Hoffstein computed
the Whittaker coefficients of an Eisenstein series on the double cover
of $GSp(2r).$ The Eisenstein series they studied was induced from a
(nonmetaplectic) cuspform on $GL(r).$ The authors found that the
Whittaker coefficients of these Eisenstein series involved quadratic
twists of the $L$-function of $f.$ In particular, these Whittaker
coefficients have Euler products.  According to the Eisenstein
conjecture stated in the introduction, when the cuspform $f$ is
replaced by a minimal parabolic Eisenstein series on $GL(r)$, the
Whittaker coefficients of the induced metaplectic Eisenstein series on
the double cover of $GSp(2r)$ are expected to coincide with the
quadratic multiple Dirichlet series associated to the root system
$B_r.$ Though a Weyl group multiple Dirichlet series does not
generally have an Euler product, it is easy to see from the relations
\eqref{eq:3},\eqref{eq:4},\eqref{eq:5} that for $n=2$ and root systems
of type $B$ (normalized so that the short roots have length one), the
function $H$ is actually multiplicative, not just twisted
multiplicative.  Therefore, modulo the validity of the Eisenstein
conjecture, our formula \ref{eqn:defofh} can be seen as analogue of
the Casselman--Shalika formula in this metaplectic context.  Bump,
Friedberg, and Hoffstein also give a Casselman--Shalika formula in
\cite{bfh-padic-whittaker}.  In work in preparation, Brubaker, Bump,
Chinta and Gunnells check that the formula of
\cite{bfh-padic-whittaker} agrees with the formula of this paper for
type $B$ and $n=2.$

\subsection{Cubic series} When $n=3$, the Kubota Dirichlet series
$\D(s,a)$ has the nice property that for $a\in \OO_S$ squarefree, its
residue at $s=4/3$ is, up to a constant and a power of $a$, the
conjugate Gauss sum $\overline{g(1,a)}$ (cf.~\cite{pat77a, pat77b}).
This was exploited by Brubaker and Bump \cite{bbFHL} to show that
residues of the $A_3,\ n=3$ series give two double Dirichlet series
involving $L$-functions associated to cubic characters studied by
Friedberg, Hoffstein, and Lieman \cite{fhl}. Using similar reasoning,
we expect that a triple residue of the $E_6, n=3$ series will give the
multiple Dirichlet series in three variables studied by Brubaker
\cite{brubaker-thesis}.  Labelling the outer nodes with the indices
1,2,3, the $E_6$ series is of heuristically of the form
\begin{equation}
  \label{eq:1}
  \sum_{a_1,\ldots, a_6}
  \frac{g(a_4, a_1)g(a_5, a_2)g(a_6, a_3)g(1, a_4)g(1, a_5)g(1, a_6)
    \overline{\prs{a_4}{a_5}}\,\overline{\prs{a_6}{a_5}}}
  {|a_1|^{s_1}\cdots |a_6|^{s_6}}.
\end{equation}
Taking residues in $s_1, s_2$ and $s_3$, the Gauss sums disappear and
we expect to be  left with a series of the form
\begin{equation}
  \label{eq:23}
  \sum_{a_4, a_5, a_6}\frac{
    \overline{\prs{a_4}{a_5}}\,\overline{\prs{a_6}{a_5}}}
  {|a_4|^{s_4'}|a_5|^{s_5'}|a_6|^{s_6'}}
\end{equation}
for some new variables $s_4', s_5', s_6'.$ The squarefree coefficients
coincide with those of a series used by Brubaker to prove an
asymptotic formula for second moment of cubic Dirichlet $L$-series.
Presumably, using the explicit description of the $\p$-part we have
given in Section \ref{sec:Waction} and using the techniques of
\cite{bbFHL} to compute the $\p$-parts of residues of the cubic
series, we can show that residues of the $E_6,\, n=3$ series coincide
with the series studied by Brubaker.  We have not checked this in
detail. 

\subsection{Higher $n$} Friedberg, Hoffstein, and Lieman \cite{fhl}
have constructed double Dirichlet series built out of $L$-functions of
order $n$ Hecke characters.  As mentioned above, Brubaker and Bump
show in \cite{bbFHL} that, when $n=3$, these series arise after taking
a single residue of the $A_3,\ n=3$ series.  They further conjecture
that for general $n$, the series of \cite{fhl} are $n-2$ fold residues
of the degree $n$ $A_n$ series.  The methods of \cite{bbFHL} do not
work for $n>3$ because of our incomplete understanding of the residues
of Kubota's Dirichlet series.  However, exploiting the fact that a
Weyl group multiple Dirichlet series over a function field can be
explicitly computed as a rational function, J. Mohler has verified the
conjecture of \cite{bbFHL} over the rational function field for $n\leq
9.$ It is likely that his techniques will yield a proof for all $n$
in the setting of a rational function field.

\subsection{The Weyl character formula}\label{ss:weights} The
construction of the rational function $h (\x ;\ell)$ in Theorem
\ref{thm:invRatlFunc} suggests that it should be thought of as a
deformation of the Weyl character formula.  Indeed, it is not hard to
prove
\[
\Delta (\x) = \sum_{w\in W} j (w,\x).
\]
With this result the definition of $h$ becomes 
\[
h (\x ; \ell) = \frac{\sum_{w\in W} j (w,\x) (1|_{\ell}w)
(\x)}{\sum_{w\in W} j (w, \x)}, 
\]
which is clearly analogous to the Weyl character formula.  

More precisely, let $\g$ be the complex semisimple Lie algebra
determined by $\Phi$, and let $\wt_{1}, \dotsc , \wt_{r}$ be the
fundamental weights of $\Phi$.  Write $h (\x ; \ell) = N (\x ; \ell)/
D (\x)$.  If $n=1$ then it turns out that $N (\x ; \ell )$ is actually
divisible by $D (\x)$, so that $h (\x ; \ell)$ is actually a
polynomial.  After some simple changes of variables and introduction
of $q$-powers, this polynomial can be identified with the character
$\chi_{\theta}$ of the representation $V_{\theta}$ of $\g$ of lowest
weight $-\theta$, where as before $\theta = \sum (l_{i} + 1)\wt_{i}$.

On the other hand, if $n>1$, then $N (\x ; \ell)$ is not divisible by
$D (\x)$ in general, and so $h$ is not a polynomial.  Nevertheless,
one still might view $h$ as a deformation of a character.  We plan to
explore this connection between characters, $N (\x ; \ell)$, and
$h(\x; \ell)$ in future work.

\bibliographystyle{alpha}
\bibliography{wmdsn}
\end{document}